\date{27.10.2010}
\newcommand{\fB}{{\mathfrak B}}
\newcommand{\bL}{{\mathbb L}}
\renewcommand{\mlabel}{\label}
\begin{document} 

%%%%%%%%%%%%%%%%%%%%%%%%

%%%%%%%%%%%%%%%%%%%%%%%%%%

\title{Holomorphic Realization of Unitary Representations of 
Banach--Lie Groups} 

\author{Karl-Hermann Neeb\begin{footnote}{
Department  Mathematik, FAU Erlangen-N\"urnberg, Bismarckstrasse 1 1/2, 
91054-Erlangen, Germany; neeb@mi.uni-erlangen.de}
\end{footnote}
\begin{footnote}{Supported by DFG-grant NE 413/7-1, Schwerpunktprogramm 
``Darstellungstheorie''.} 
\end{footnote}}

\maketitle
\centerline{\sl To J. Wolf on the occasion of his 75th birthday} 

%{\bf This is {\tt herm4.tex}} 

\begin{abstract} In this paper we explore the method of 
holomorphic induction for unitary representations of 
Banach--Lie groups. First we show that the classification 
of complex bundle structures on homogeneous Banach bundles over 
complex homogeneous spaces of real Banach--Lie groups 
formally looks as in the finite dimensional case. 
We then turn to a suitable concept of 
holomorphic unitary induction and show that 
this process preserves commutants. In particular, holomorphic 
induction from irreducible representations leads to irreducible ones. 
Finally we develop criteria to identify representations 
as holomorphically induced ones and apply these to 
the class of so-called positive energy representations. 
All this is based on extensions of Arveson's concept of spectral subspaces to 
representations on Fr\'echet spaces, in particular on spaces of 
smooth vectors. \\
{\em Keywords:} infinite dimensional Lie group, unitary representation, 
smooth vector, analytic vector, 
holomorphic Hilbert bundle, semibounded representation, Arveson spectrum. \\
{\em MSC2000:} 22E65, 22E45.  
\end{abstract} 

\section*{Introduction} 

This paper is part of a long term project concerned with a 
systematic approach to unitary representations 
of Banach--Lie groups in terms of conditions 
on spectra in the derived representation. A unitary 
representation $\pi \: G \to \U(\cH)$ 
is said to be {\it smooth} if the subspace 
$\cH^\infty$ of smooth vectors is dense. 
This is automatic for continuous 
representations of finite dimensional groups but not in general 
(cf.\ \cite{Ne10a}). For any smooth 
unitary representation, the {\it derived representation} 
\[ \dd\pi \: \g = \L(G)\to \End(\cH^\infty), \quad 
\dd\pi(x)v := \derat0 \pi(\exp tx)v\]  
carries significant information in the sense that the closure of the 
operator $\dd\pi(x)$ coincides with the infinitesimal generator of the 
unitary one-parameter group $\pi(\exp tx)$. We call $(\pi, \cH)$ 
{\it semibounded} if the function 
\[ s_\pi \: \g \to \R \cup \{ \infty\},\ \ 
s_\pi(x) 
:= \sup\big(\Spec(i\dd\pi(x))\big) 
= \sup\{ \la i\dd\pi(x)v,v\ra \: v \in \cH^\infty, \|v\|=1\}\] 
is bounded on the neighborhood of some point in $\g$ 
(cf.\ \cite[Lemma~5.7]{Ne08}). 
Then the set $W_\pi$ of all such points 
is an open invariant convex cone in the Lie algebra $\g$. 
We call $\pi$ {\it bounded} if $s_\pi$ is bounded on some $0$-neighborhood, 
which is equivalent to $\pi$ being norm continuous 
(cf.\ Proposition~\ref{prop:contin}). 
All finite dimensional unitary representations are bounded 
and many of the unitary representations appearing in physics are semibounded 
(cf. \cite{Ne10c}). 

One of our goals is a classification of the irreducible semibounded 
representations and the development of 
tools to obtain direct integral decompositions 
of semibounded representations. To this end, realizations 
of unitary representations in spaces of holomorphic sections of 
vector bundles turn out to be extremely helpful. 
Clearly, the bundles in question should be allowed to have 
fibers which are infinite dimensional Hilbert spaces, and to 
treat infinite dimensional groups, we also have to admit 
infinite dimensional base manifolds. 
The main point of the present paper is to provide 
effective methods to treat unitary representations 
of Banach--Lie groups in spaces of holomorphic sections of homogeneous 
Hilbert bundles. 

In Section~\ref{sec:1} we explain how to parametrize 
the holomorphic structures on Banach vector bundles 
$\bV = G \times_H V$ over a Banach homogeneous space $M = G/H$ 
associated to a norm continuous representation 
$(\rho, V)$ of the isotropy group~$H$. The main result 
of Section~\ref{sec:1} is 
Theorem~\ref{thm:a.2} which generalizes the corresponding 
results for the finite dimensional case by Tirao and Wolf 
(\cite{TW70}). As in  finite dimensions, the complex bundle 
structures are specified by  
``extensions'' $\beta \: \fq \to \gl(V)$ of the differential 
$\dd \rho \: \fh \to \gl(V)$ to a representation of the complex subalgebra 
$\fq \subeq \g_\C$ specifying the complex structure on~$M$ in the sense that 
$T_H(M) \cong \g_\C/\fq$ 
(cf.~\cite{Bel05}). The main point in \cite{TW70} is that the homogeneous space 
$G/H$ need not be realized as an open $G$-orbit in a complex 
homogeneous space of a complexification $G_\C$, which is impossible 
if the subgroup of $G_\C$ generated by $\exp \fq$ is not closed. 
In the Banach context, two additional difficulties appear: 
The Lie algebra $\g_\C$ may not be integrable in the sense that 
it does not belong to any Banach--Lie group (\cite{GN03}) and, even if $G_\C$ 
exists and the subgroup $Q := \la \exp \fq \ra$ is closed, it 
need not be a Lie subgroup so that there is no natural construction 
of a manifold structure on the quotient space~$G/Q$. 
As a consequence, the strategy of the proof 
in \cite{TW70} can not be used for Banach Lie groups. Another 
difficulty of the infinite dimensional context is that there is 
no general existence theory for solutions of $\oline\partial$-equations 
(see in particular \cite{Le99}). 

The next step, carried out in Section~\ref{sec:2},  
is to analyze Hilbert subspaces of the space $\Gamma(\bV)$ 
of holomorphic sections of $\bV$ 
on which $G$ acts unitarily. In this context 
$(\rho, V)$ is a bounded unitary representation. The most regular 
Hilbert spaces with this property are those that we call 
{\it holomorphically induced from $(\rho, \beta)$}. 
They contain $(\rho, V)$ as an $H$-subrepresentation 
satisfying a compatibility condition with respect to $\beta$. 
Here we show that if 
the subspace $V \subeq \cH$ is invariant under the commutant 
$B_G(\cH) = \pi(G)'$, then restriction to $V$ yields an isomorphism 
of the von Neumann algebra $B_G(\cH)$ with a suitably defined commutant 
$B_{H,\fq}(V)$ of $(\rho,\beta)$ (Theorem~\ref{thm:5.5}). 
This has remarkable 
consequences. One is that the representation of $G$ on $\cH$ 
is irreducible (multiplicity free, discrete, type I) if and only 
if the representation of $(H,\fq)$ on $V$ has this property. 
The second main result in Section~\ref{sec:2} is a 
criterion for a unitary representation 
$(\pi, \cH)$ of $G$ to be holomorphically induced 
(Theorem~\ref{thm:a.3}). 

Section~\ref{sec:3} is devoted to a description 
of environments in which Theorem~\ref{thm:a.3} applies naturally. 
Here we consider an element $d \in \g$ which is {\it elliptic} in 
the sense that the one-parameter group $e^{\R \ad d}$ of automorphisms 
of $\g$ is bounded. This is equivalent to the existence of an 
invariant compatible norm. Suppose that $0$ is isolated in $\Spec(\ad d)$. 
Then the subgroup $H := Z_G(d) = \{ g \in G \: \Ad(g)d = d \}$ 
is a Lie subgroup and the homogeneous space $G/H$ carries a natural 
complex manifold structure. A smooth unitary representation 
$(\pi, \cH)$ of $G$ is said to be of 
{\it positive energy} if the selfadjoint operator 
$-i \dd\pi(d)$ is bounded from below. Note that this is in particular 
the case if $\pi$ is semibounded with $d \in W_\pi$. 
Any positive energy representation 
is generated as a $G$-representation by the closed subspace 
$V := \oline{(\cH^\infty)^{\fp^-}}$, where 
$\fq = \fp^+ \rtimes \fh_\C \subeq \g_\C$ is the complex subalgebra 
defining the complex structure on $G/H$ and 
$\fp^- := \oline{\fp^+}$. 
If the $H$-representation $(\rho, V)$ is bounded, then 
$(\pi, \cH)$ is holomorphically induced 
by $(\rho,\beta)$, where $\beta$ is determined by  $\beta(\fp^+) = \{0\}$ 
(Theorem~\ref{thm:6.2}). In Theorem~\ref{thm:3.15} we further 
show that, under these assumptions, $\pi$ is semibounded with $d \in W_\pi$. 
These results are rounded off by Theorem~\ref{thm:6.2b} which shows 
that, if $\pi$ is semibounded with $d \in W_\pi$, then 
$\pi$ is a direct sum of holomorphically induced representations 
and if, in addition, $\pi$ is irreducible, then $V$ coincides 
with the minimal eigenspace of $-i\dd\pi(d)$ and the representation 
of $H$ on this space is automatically bounded. 
For all this we use refined analytic tools based on the fact that 
the space 
$\cH^\infty$ of smooth vectors is a Fr\'echet space on which 
$G$ acts smoothly (\cite{Ne10a}) and the $\R$-action on 
$\cH^\infty$ defined by $\pi_d(t) := \pi(\exp td)$ is equicontinuous. 
These properties permit us to use a 
suitable generalization of Arveson's spectral theory, developed
 in Appendix~\ref{app:1}. 

In \cite{Ne11} we shall use the techniques developed 
in the present paper to obtain a complete descriptions of semibounded 
representations for the class of hermitian Lie groups. 
One would certainly like to extend the tools developed here to 
Fr\'echet--Lie groups such as diffeomorphism groups and groups of 
smooth maps. Here a serious problem is the construction of 
holomorphic vector bundle structures on associated bundles, 
and we do not know how to extend this beyond the Banach context, 
especially because of the non-existing solution theory for 
$\oline\partial$-equations (cf.\ \cite{Le99}). 
%However, we hope that the context of covariant representations 
%developed in Section~\ref{sec:3} can also be used to 
%study representations of Fr\'echet--Lie groups such that 
%$G^\infty \rtimes_\alpha \R$, where $G^\infty \subeq G$ is the 
%Fr\'echet--Lie group of those elements of $G$ whose $\alpha$-orbit 
%map is smooth. 

Several of our results have natural predecessors in more restricted
 contexts. In \cite{BR07} Belti\c{t}\u{a} and Ratiu 
study holomorphic Hilbert bundles over Banach manifolds $M$ 
and consider the endomorphism bundle $B(\bV)$  over $M \times \oline M$ 
(using a different terminology). They also relate Hilbert subspaces 
$\cH$ of $\Gamma(\bV)$ to 
reproducing kernels, which in this context are 
sections of $B(\bV)$ satisfying a certain holomorphy condition 
which under the assumption of local boundedness of the kernel 
is equivalent to 
holomorphy as a section of $B(\bV)$ (\cite[Thm.~4.2]{BR07}; 
see also \cite[Thm.~1.4]{BH98} and \cite{MPW97} for the case of 
finite dimensional 
bundles over finite dimensional manifolds and \cite{Od92} for the 
case of line bundles). 
Belti\c{t}\u{a} and Ratiu 
use this setup to realize certain representations  
of a $C^*$-algebra $\cA$, which define bounded unitary representations 
of the unitary group $G := \U(\cA)$, in spaces of holomorphic sections of a 
bundle over a homogeneous space of the unit group $\cA^\times$ on 
which $\U(\cA)$ acts transitively (\cite[Thm.~5.4]{BR07}). 
These homogeneous spaces are of the form 
$G/H$, where $H$ is the centralizer of some hermitian element 
$a \in \cA$ with finite spectrum, so that the realization 
of these representations by holomorphic sections could also be derived 
from our Corollary~\ref{cor:6.2}. This work has been continued by 
Belti\c{t}\u{a} with Gal\'e in a different direction, focusing 
on complexifications of real homogeneous spaces 
instead of invariant complex structures on the real spaces (\cite{BG08}). 

In \cite{Bo80} Boyer constructs irreducible unitary representations 
of the Hilbert--Lie group 
$\U_2(\cH) = \U(\cH) \cap (\1 + B_2(\cH))$ via holomorphic induction 
from characters of the diagonal subgroup. They live in spaces of 
holomorphic sections on homogeneous spaces of the complexified 
group $\GL_2(\cH) = \GL(\cH) \cap (\1 + B_2(\cH))$, 
which are restricted versions of flag manifolds carrying 
strong K\"ahler structures.

In the present paper we only deal with representations 
associated to homogeneous vector bundles. To understand branching laws 
for restrictions of representations to subgroups, one should 
also study situations where the group $G$ does not act transitively 
on the base manifold. For finite dimensional holomorphic vector 
bundles, this has been done extensively by 
T.~Kobayashi who obtained powerful criteria for representations 
in Hilbert spaces of holomorphic sections to be multiplicity 
free (cf.\ \cite{KoT05}, \cite{KoT06}). It would be interesting to explore 
the extent to which 
Kobayashi's technique of visible actions on complex manifolds 
can be extended to Banach manifolds. \\

{\bf Notation:} For a group $G$ we write $\1$ for the neutral element 
and $\lambda_g(x) = gx$, resp., $\rho_g(x) = xg$ 
for left multiplications, resp., right multiplications. 
We write $\g_\C$ for the complexification of a real Lie algebra $\g$ and 
$\oline{x + iy} := x - iy$ for the complex conjugation on $\g_\C$, which is an 
antilinear Lie algebra automorphism. 
For two Hilbert spaces $\cH_1, \cH_2$, we write 
$B(\cH_1, \cH_2)$ for the space of continuous (=bounded) linear operators 
$\cH_1 \to \cH_2$ and $B_p(\cH)$, $1 \leq p < \infty$, 
for the space of Schatten class operators 
$A \: \cH \to \cH$ of order $p$, i.e., $A$ is compact with 
$\tr((A^*A)^{p/2}) < \infty$. \\

{\bf Acknowledgment:} We thank Daniel Belti\c{t}\u{a} for a careful 
reading of earlier versions of this paper, for pointing out references 
and for various interesting discussions on its subject matter. 

\tableofcontents

\section{Holomorphic Banach bundles} \mlabel{sec:1}

To realize unitary representations 
of Banach--Lie groups in spaces of holomorphic sections of 
Hilbert bundles, we first need a 
parametrization of holomorphic bundle structures on 
given homogeneous vector bundles for real Banach--Lie groups. 
As we shall see in Theorem~\ref{thm:a.2} below, 
formulated appropriately, the corresponding results
from the finite dimensional case (cf.\ \cite{TW70}) 
can be generalized to the Banach context. 

The following observation provides some information 
on the assumptions required on the isotropy representation 
of a Hilbert bundle. It is a slight modification 
of results from \cite[Sect.~3]{Ne09}. 

\begin{prop}\label{prop:contin} Let $(\pi,\cH)$ be a 
unitary representation of the Banach--Lie group $G$ 
for which all vectors are smooth. 
Then $\pi \: G \to \U(\cH)$ is a morphism of 
Lie groups, hence in particular norm continuous, i.e., a 
bounded representation. 
\end{prop}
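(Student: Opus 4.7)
The plan is to show that the hypothesis $\cH^\infty = \cH$ forces the derived representation $\dd\pi$ to take values in bounded operators and to be a continuous linear map $\g \to B(\cH)$, from which smoothness of $\pi$ as a map between Banach--Lie groups follows by a Stone's theorem/functoriality of $\exp$ argument.

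First I would verify that $\dd\pi(x) \in B(\cH)$ for every $x \in \g$. Since every vector is smooth, the operator $\dd\pi(x)$ is defined on all of $\cH$ and its closure is the skew-adjoint infinitesimal generator of the strongly continuous unitary one-parameter group $t \mapsto \pi(\exp tx)$. An everywhere-defined operator that coincides with its closed extension is itself closed, and the closed graph theorem then yields $\dd\pi(x) \in B(\cH)$. Equivalently, $\dd\pi(x)$ is skew-symmetric with full domain, hence skew-adjoint, hence bounded.

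Next I would show that $\dd\pi \colon \g \to B(\cH)$ is a continuous linear map. For each fixed $v \in \cH$ the orbit map $\pi^v \colon G \to \cH,\, g \mapsto \pi(g)v$ is smooth, so its tangent map at $\1$ is a continuous linear map $\g \to \cH$, namely $x \mapsto \dd\pi(x)v$. In particular, for $x$ in the unit ball of $\g$ the family $\{\dd\pi(x)v\}$ is bounded in $\cH$ for every $v$. By the Banach--Steinhaus theorem applied to the family $\{\dd\pi(x) : \|x\|\leq 1\} \subseteq B(\cH)$, we conclude $\sup_{\|x\|\le 1}\|\dd\pi(x)\| < \infty$, i.e.\ $\dd\pi$ is norm-continuous.

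Finally, from continuity of $\dd\pi$ I would deduce that $\pi$ is a morphism of Banach--Lie groups. For each $x \in \g$, both $t \mapsto \pi(\exp tx)$ and $t \mapsto \exp_{\U(\cH)}(t\,\dd\pi(x)) = e^{t\dd\pi(x)}$ are strongly continuous unitary one-parameter groups with the same (skew-adjoint) generator $\dd\pi(x)$; Stone's theorem gives equality. Hence on a neighborhood of $\1$ one has the identity $\pi \circ \exp_G = \exp_{\U(\cH)} \circ \dd\pi$, so $\pi$ is smooth near $\1$, and by the homomorphism property smooth everywhere. Norm continuity, and thus boundedness of the representation, are immediate consequences. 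The only real technical point in this argument is the step from closedness of $\dd\pi(x)$ on the full space to its boundedness; once that is in hand the rest is an application of Banach--Steinhaus and the uniqueness part of Stone's theorem.
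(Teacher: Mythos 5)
Your argument is correct, and its first half coincides with the paper's: both establish $\dd\pi(x)\in B(\cH)$ from the fact that the generator of $t\mapsto\pi(\exp tx)$ is closed and, by the hypothesis $\cH^\infty=\cH$, everywhere defined, so the closed graph theorem applies. Where you diverge is in how you pass from there to the conclusion. For the continuity of $\dd\pi\colon\g\to B(\cH)$ the paper observes that $\|\dd\pi(x)\|=\max(s_\pi(x),s_\pi(-x))$ is a lower semicontinuous seminorm, hence continuous because $\g$ is barreled; you instead apply Banach--Steinhaus to the family $\{\dd\pi(x):\|x\|\le 1\}$, using that each orbit map is smooth so that $x\mapsto\dd\pi(x)v$ is bounded on the unit ball for each fixed $v$. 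These are the same uniform-boundedness principle in two packagings (barreledness of a Banach space is precisely what makes the paper's step work), so there is no difference of substance there. The genuine divergence is the final step: the paper checks that the functionals $\la\dd\pi(\cdot)v,v\ra$, $\|v\|=1$, form an equicontinuous subset of $\g'$ and then quotes \cite[Thm.~3.1]{Ne09}, whereas you argue directly that Stone's theorem forces $\pi(\exp_G x)=e^{\dd\pi(x)}$, so that $\pi=\exp_{\U(\cH)}\circ\,\dd\pi\circ\exp_G^{-1}$ is smooth near $\1$ and smoothness propagates by left translation. Your route is more self-contained and makes the mechanism visible; the paper's is shorter because it outsources exactly this step to an external result. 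Both are valid.
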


\begin{prf} Our assumption implies that, for each $x\in \g$, 
the infinitesimal generator $\dd\pi(x)$ of the unitary 
one-parameter group $\pi_x(t) := \pi(\exp_G(tx))$ 
is everywhere defined, hence a bounded operator because its 
graph is closed. 

Therefore the derived representation leads to a morphism of 
Banach--Lie algebras 
$\dd\pi \: \g \to B(\cH).$
Since the function $s_\pi$ 
is a sup of a set of continuous linear functionals, it is 
lower semi-continuous. Hence the function 
\[ x \mapsto \|\dd\pi(x)\| 
= \max(s_\pi(x), s_\pi(-x)) \] 
is a lower semi-continuous seminorm 
 and therefore continuous because $\g$ is barreled 
(cf.\ \cite[\S III.4.1]{Bou07}).
We conclude that the set of all linear functional 
$\la \dd\pi(\cdot)v,v\ra$, $v \in \cH^\infty$ a unit vector, 
 is equicontinuous in  $\g'$, 
so that the assertion follows from \cite[Thm.~3.1]{Ne09}. 
\end{prf}

We now turn to the case where $M$ is a Banach homogeneous space. 
Let $G$ be a Banach--Lie group with Lie algebra $\g$ and 
$H \subeq G$ be a split Lie subgroup, i.e., 
the Lie algebra $\fh$ of $H$ has a closed complement in $\g$, 
 for which the coset space 
$M := G/H$ carries the structure of a complex manifold such that 
the projection $q_M\: G\to G/H$ is a smooth $H$-principal bundle and 
$G$ acts on $M$ by holomorphic maps. 
Let $m_0 = q_M(\1) \in M$ be the canonical base point and 
$\fq \subeq \g_\C$ be the kernel of the complex linear extension 
of the map $\g \to T_{m_0}(G/H)$ to $\g_\C$, so that $\fq$ is a closed 
subalgebra of~$\g_\C$ invariant under $\Ad(H)$ 
(cf.\ \cite[Thm.~15]{Bel05}). We call $\fq$ the 
{\it subalgebra defining the complex structure on $M = G/H$} 
because specifying $\fq$ means to identify 
$T_{m_0}(G/H)\cong \g/\fh$ with the complex Banach space $\g_\C/\fq$ 
and thus specifying the complex structure on~$M$.

\begin{rem} \mlabel{rem:norm-cont} 
If the Banach--Lie group $G$ acts smoothly by isometric bundle automorphisms 
on the holomorphic Hilbert bundle $\bV$ over $M = G/H$, then 
the action of the stabilizer group $H$ on $V := \bV_{m_0}$ 
is smooth, so that Proposition~\ref{prop:contin} shows that it 
defines a bounded unitary representation 
$\rho \: H \to \U(V)$. 

If $\sigma_M \: G \times M \to M$ denotes the corresponding action on 
$M$ and $\dot\sigma_M \: \g \to \cV(M)$ the derived action, 
then, for the closed subalgebra 
\[ \fq := \{ x \in \g_\C \: \dot\sigma_M(x)(m_0) = 0\}, \] 
the representation $\beta \: \fq \to B(V)$ 
is given by a continuous bilinear map 
\[ \hat\beta \: \fq \times V \to V, \quad 
(x,v) \mapsto \beta(x)v.\] 
This means that $\beta$ is a continuous morphism 
of Banach--Lie algebras. 
\end{rem}

This observation leads us to the following structures.

\begin{defn}\mlabel{def:a.1} Let $H \subeq G$ be a 
Lie subgroup and  $\fq \subeq \g_\C$ be a closed subalgebra containing 
$\fh_\C$. 
If  $\rho \: H \to \GL(V)$ is a norm continuous representation 
on the Banach space $V$, then a morphism 
$\beta \: \fq \to \gl(V)$ of complex Banach--Lie algebras 
is said to be an {\it extension of $\rho$} if 
\begin{equation}\label{eq:comprel} 
\dd\rho = \beta\res_\fh \quad \mbox{ and } \quad 
\beta(\Ad(h)x) = \rho(h)\beta(x)\rho(h)^{-1} \quad \mbox{ for } \quad 
h \in H, x \in \fq. 
\end{equation}
\end{defn}

\begin{defn} (a) 
If $q \: \bV = G \times_H V \to M$ is a homogeneous vector bundle 
defined by the norm continuous representation $\rho \: H \to \GL(V)$, 
we associate to each section $s \: M \to \bV$ the function 
$\hat s \: G \to V$ specified by $s(gH) = [g, \hat s(g)]$. 
A function $f \: G \to V$ is of the form $\hat s$ for a 
section of $\bV$ if and only if 
\begin{equation} \label{eq:equiv-sec} 
f(gh) = \rho(h)^{-1} f(g) \quad \mbox{ for } \quad 
g \in G, h \in H. 
\end{equation}
We write $C(G,V)_\rho$, resp., $C^\infty(G,V)_\rho$ for the 
continuous, resp., smooth functions satisfying 
\eqref{eq:equiv-sec}.  

(b) We associate to each 
$x \in \g_\C$ the left invariant 
differential operator on $C^\infty(G,V)$ defined by 
\[ (L_x f)(g) := \derat0 f(g\exp(tx)) \quad \mbox{ for } \quad 
x \in \g. \]
By complex linear extension, we define the operators 
\[L_{x+ iy} := L_x + i L_y \quad \mbox{ for } \quad z = x + iy \in \g_\C, 
x,y \in \g. \]

(c) For any extension $\beta$ of $\rho$, 
we write $C^\infty(G,V)_{\rho,\beta}$ for the 
subspace of those elements $f \in C^\infty(G,V)_\rho$ 
satisfying, in addition,  
\begin{equation} \label{eq:inf-equiv}
L_x f = - \beta(x) f \quad \mbox{ for } \quad x \in \fq. 
\end{equation}
\end{defn}

\begin{rem}
For $x \in \g$, $h \in H$ and any smooth function 
$\phi$ defined on an open right $H$-invariant subset of 
$G$, we have 
\begin{eqnarray*}
(L_x\phi)(gh) 
&=& \derat0 \phi\big(g\exp(t\Ad(h)x)h\big)=
 (L_{\Ad(h)x}(\phi \circ \rho_h))(g), 
\end{eqnarray*}
so that we obtain for each $x \in \g_\C$ and $h \in H$ the 
relation 
\begin{eqnarray}\label{eq:liederrel}
(L_x\phi)\circ \rho_h = L_{\Ad(h)x}(\phi \circ \rho_h). 
\end{eqnarray}
\end{rem}

The proof of the following theorem is very much inspired by \cite{TW70}.
%\begin{footnote}{Here we should also insert a reference 
%to Martin Laubinger's new paper on solutions of the $\oline\partial$-
%Maurer--Cartan equation in the Banach context.}  
%\end{footnote}

\begin{thm} \mlabel{thm:a.2} Let $M = G/H$, $V$ be a complex Banach space  
and $\rho \: H \to \GL(V)$ be a norm continuous representation.  
Then, for any extension $\beta \: \fq \to \gl(V)$ of $\rho$, 
the associated bundle $\bV := G \times_H V$ carries 
a unique structure of a holomorphic vector bundle over $M$, 
which is determined by the 
characterization of the holomorphic sections $s \: M \to \bV$ 
as those for which $\hat s \in C^\infty(G,V)_{\rho,\beta}$. 
Any such holomorphic bundle structure is $G$-invariant in the sense that 
$G$ acts on $\bV$ by holomorphic bundle automorphism. 
Conversely, every $G$-invariant holomorphic vector bundle 
structure on $\bV$ is obtained from this construction.
\end{thm}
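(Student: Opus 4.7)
The plan is to establish the theorem in four stages: uniqueness, $G$-invariance, local construction of a twisted trivialization, and finally the converse direction.

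\emph{Uniqueness and $G$-invariance.} Both are essentially formal. Uniqueness follows because a holomorphic bundle structure is determined by its sheaf of local holomorphic sections, which here is prescribed by the characterization. For $G$-invariance, the operators $L_x$ are left-invariant on $G$, so the left regular action $(\lambda_{g_0} f)(g) := f(g_0^{-1}g)$ preserves both \eqref{eq:equiv-sec} and \eqref{eq:inf-equiv}; hence $C^\infty(G,V)_{\rho,\beta}$ is $G$-stable, and $G$ must act on $\bV$ by holomorphic bundle automorphisms.

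\emph{Local construction.} Pick a closed real complement $\fm$ of $\fh$ in $\g$. The identification $\fm \cong \g/\fh \cong \g_\C/\fq$ endows $\fm$ with a complex structure $J$ making the chart $x \mapsto q_M(\exp x)$ holomorphic, and forces the splitting $\fq = \fh_\C \oplus \fq_\fm$ with $\fq_\fm = \{Jy - iy : y \in \fm\}$. For a sufficiently small neighborhood $U \subseteq \fm$ of $0$, I would define a local trivialization of $\bV$ by
\[
\Phi\: U \times V \to \bV, \qquad (x,v) \mapsto [\exp(x), \Psi(x) v],
\]
for a smooth $\Psi\: U \to \GL(V)$ with $\Psi(0) = I$ chosen so that $d\Psi(0)$ absorbs the antiholomorphic part of $\beta|_{\fq_\fm}$. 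A short linearized calculation shows that the condition at $0$ reads $d\Psi(0)(Jy) - i\, d\Psi(0)(y) = -\beta(Jy - iy)$ for $y \in \fm$, and one verifies directly that the explicit choice $\Psi(x) := \exp\bigl(-\tfrac{i}{2}\beta(Jx - ix)\bigr)$ satisfies it. Declaring $\Phi$ to be holomorphic, with the product complex structure on $(U,J)\times V$, equips $\bV|_{q_M(U)}$ with a holomorphic structure; the translates $\Phi_g(x,v) := g \cdot \Phi(x,v)$ extend this to neighborhoods of every point of $M$, and the $G$-invariance from Stage~1 guarantees that the transition maps are holomorphic on overlaps, so the charts assemble into a bona fide holomorphic bundle structure.

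\emph{Identification of holomorphic sections.} To match this atlas with $C^\infty(G,V)_{\rho,\beta}$, I would verify the correspondence locally. A section represented in the chart by $\sigma\: U \to V$ corresponds to $\hat s(\exp x) = \Psi(x)\sigma(x)$, extended to $\exp(U)H$ by the equivariance \eqref{eq:equiv-sec}. The condition \eqref{eq:inf-equiv} on $\fh_\C$ is then automatic from $\dd\rho = \beta|_\fh$, while on $\fq_\fm$ the choice of $\Psi$ reduces it at the base point to the Cauchy--Riemann equation for $\sigma$; real-analyticity of $G$ and $\Psi$ together with left-translation by $G$ (which preserves both conditions) propagates the identity from $\1$ throughout the neighborhood, establishing the equivalence.

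\emph{Converse, and main obstacle.} Given a $G$-invariant holomorphic structure on $\bV$, the isotropy action on $V = \bV_{m_0}$ is smooth, hence norm-continuous by Remark \ref{rem:norm-cont}, which yields $\rho$. The infinitesimal $G$-action gives a morphism $\g \to \cV(\bV)$ into holomorphic vector fields on $\bV$; extending complex-linearly to $\g_\C$ and restricting to $\fq$, these vector fields project to zero in $T_{m_0}M \cong \g_\C/\fq$, hence are vertical at $m_0$, and evaluation at $m_0$ produces $\beta\: \fq \to \gl(V)$ satisfying \eqref{eq:comprel} by $G$-equivariance. The main obstacle lies in the local construction: in the Tirao--Wolf approach one embeds $G/H$ as an open orbit inside $G_\C/Q$ for $Q := \langle \exp\fq\rangle$ and pulls back the manifestly holomorphic structure, but here neither $G_\C$ nor $Q$ need exist as Banach--Lie groups. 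Consequently $\Psi$ must be constructed intrinsically as a formal ``partial exponential of $\beta$ along $\fq_\fm$,'' and the equivalence between the twisted Cauchy--Riemann condition on $\sigma$ and the intrinsic equation \eqref{eq:inf-equiv} on $\hat s$ has to be verified by direct calculation rather than inherited from a complexification.
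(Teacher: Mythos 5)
Your overall architecture matches the paper's: a twisted local trivialization near the base point, translation by $G$ to cover $M$, recovery of $\beta$ from the infinitesimal action for the converse, and you correctly identify both the decomposition $\fq = \fh_\C \oplus \fq_\fm$ (the paper's $\fq = \fh_\C \oplus E_-$) and the reason Tirao--Wolf cannot be quoted. The gap is in the local construction. Your explicit choice $\Psi(x) = \exp\bigl(-\tfrac{i}{2}\beta(Jx-ix)\bigr)$ is engineered to satisfy only the \emph{linearization at $x=0$} of the required equation, namely $d\Psi(0)(Jy) - i\,d\Psi(0)(y) = -\beta(Jy-iy)$. What is actually needed --- both for the transition functions between $\Phi$ and its $G$-translates $\Phi_g$ to be holomorphic and for the identification of local holomorphic sections with \eqref{eq:inf-equiv} --- is the full first-order system $L_w\hat\Phi = -\beta(w)\hat\Phi$ for all $w \in \fq$ \emph{at every point} of the chart domain. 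Your $\Psi$ does not satisfy this away from the origin in general: writing $x = x_+ * y$ with $x_+ \in E_+$ and $y \in \fq$ via the BCH product, the correct frame is $e^{-\beta(y)}$, and $y$ agrees with the linear projection $\tfrac12(x+iJx)$ only to first order; the higher-order BCH corrections are exactly what your formula omits whenever $[E_+,E_-]\neq 0$.

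The propagation step you invoke to close this hole is circular. Left translation carries the condition at $\1$ to a condition at $g$ \emph{in the translated chart} $\Phi_g$, not in $\Phi$; comparing the two charts requires knowing that their transition function is holomorphic, which is precisely the assertion being proved. Likewise, ``the $G$-invariance from Stage~1'' refers to invariance of the function space $C^\infty(G,V)_{\rho,\beta}$, which says nothing about the structure defined by your chart until the two descriptions are known to coincide. Real-analyticity cannot rescue a pointwise (even first-order) identity. The paper's resolution is to build the frame so that the full equation holds identically: choose BCH coordinates $\mu\: U_\fr \times U_\fq \to U \subeq \g_\C$ with $\fr = E_+$ and set $F(x*y) := e^{-\beta(y)}$; since $\beta$ is a morphism of Lie algebras, $e^{-\beta(y*tw)} = e^{-t\beta(w)}e^{-\beta(y)}$ for $w \in \fq$, whence $L_wF = -\beta(w)F$ on all of $U_G$, after which the $H$-equivariant extension and the translated frames $F_m$ yield holomorphic transition functions by the product rule. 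If you replace your $\Psi$ by this $F$ (restricted to your real chart), the rest of your outline goes through.
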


\begin{prf} {\bf Step 1:} Let $\beta$ be an extension 
of $\rho$ and  $E \subeq \g$ be a closed subspace 
complementing $\fh$. Then $E \cong \g/\fh \cong T_{m_0}(M)$ 
implies the existence 
of a complex structure $I_E$ on $E$ for which $E \to \g/\fh$ is an isomorphism 
of complex Banach spaces. Therefore 
the $I_E$-eigenspace decomposition 
\[ E_\C = E_+ \oplus E_-, \quad E_\pm = \ker(I_E \mp i\1),\] 
is a direct decomposition into closed subspaces. The 
quotient map $\g_\C \to \g_\C/\fq \cong \g/\fh$ 
is surjective on $E_+$ and annihilates $E_-$. Now 
$\fr := E_+$ is a closed complex complement of $\fq 
= \fh_\C \oplus E_-$ in $\g_\C$ (\cite[Thm.~15]{Bel05}). 

Pick open convex $0$-neighborhoods 
$U_\fr \subeq \fr$ and $U_\fq \subeq \fq$ such that the BCH multiplication 
$*$ defines a biholomorphic map 
$\mu \: U_\fr \times U_\fq \to U, (x,y) \mapsto x * y$, 
onto the open $0$-neighborhood $U \subeq \g_\C$ 
and that $*$ defines an associative multiplication 
defined on all triples of elements of $U$. 

{\bf Step 2:} On the $0$-neighborhood  $U \subeq \g_\C$, we consider the 
holomorphic function 
$$ F \: U \to \GL(V), \quad 
F(x * y) := e^{-\beta(y)}. $$
Let $U_\g \subeq U$ be an open $0$-neighborhood which is 
mapped by $\exp_G$ diffeomorphically onto an open $\1$-neighborhood 
$U_G$ of $G$. Then we consider the smooth function 
$$ f \: U_G \to \GL(V), \quad \exp_G z \mapsto F(z). $$ 

For $w \in \g$ and $z \in U_\g$, the BCH product $z * tw \in \g_\C$ 
is defined if 
$t$ is small enough, and we have 
\begin{equation}\label{eq:1.4} 
(L_w f)(\exp_G z) = \derat0 F(z * tw) 
= \dd F(z) \dd\lambda_z^*(0)w, 
\end{equation}
where $\dd\lambda_z^*(0) \: \g_\C \to \g_\C$ is the differential 
of the multiplication map $\lambda_z^*(x) = z * x$ in $0$.
As \eqref{eq:1.4} is complex linear in $w \in\g_\C$, it follows that 
$$ (L_w f)(\exp_G z) = \dd F(z) \dd\lambda_z^*(0)w $$  
hold for every $w \in \g_\C$, hence in particular for $w \in \fq$. 
For $w \in \fq$ and $z = \mu(x,y)$, we thus obtain 
\begin{align*}
\dd F(z) \dd\lambda_z^*(0)w 
&= \derat0 F(x * y * tw)
= \derat0 e^{-\beta(y * tw)} 
= \derat0  e^{-t \beta(w)} e^{-\beta(y)}\\
&=  -\beta(w) e^{-\beta(y)}. 
\end{align*}
We conclude that 
$$ (L_w f)(g) = -\beta(w) f(g) \quad \mbox{ for } \quad g \in U_G. $$ 
In particular, we obtain 
$f(gh) = \rho(h)^{-1} f(g)$ for $g \in U_G, h \in H_0.$ 

{\bf Step 3:} 
Since $H$ is a complemented Lie subgroup, there 
exists a connected submanifold $Z \subeq G$ containing $\1$ for which 
the multiplication map 
$Z\times H \to G, (x,h) \mapsto xh$ is a diffeomorphism onto 
an open subset of $G$. Shrinking $U_G$, we may therefore 
assume that $U_G = U_Z U_H$ holds for a connected open $\1$-neighborhood 
$U_Z$ in $Z$ and a connected open $\1$-neighborhood 
$U_H$ in $H$. Then 
\[ \tilde f(z h) := \rho(h)^{-1} f(z) 
\quad \mbox{ for } \quad z \in U_Z, h \in H, \] 
defines a smooth function $\tilde f \: U_Z H \to \GL(V)$. 
That it extends $f$ follows from the fact that 
$u = zh \in U_G$ with $z \in U_Z$ and $h \in U_H$ implies 
$h \in H_0$, so that 
$$ \tilde f(u) 
= \rho(h)^{-1} f(z)
= f(zh) = f(u). $$

For $w \in \fq$, formula \eqref{eq:liederrel}
leads to 
\begin{align*}
(L_w \tilde f)(zh) 
&= (L_{\Ad(h)w}(\tilde f \circ \rho_h))(z)
= L_{\Ad(h)w}(\rho(h)^{-1}\tilde f)(z)
= L_{\Ad(h)w}(\rho(h)^{-1}f)(z)\\
&= - \rho(h)^{-1}\beta(\Ad(h)w)f(z) 
= - \beta(w)\rho(h)^{-1}f(z) 
= - \beta(w)\tilde f(zh) . 
\end{align*}
Therefore $\tilde f$ satisfies 
\begin{equation}\label{eq:comprel2}
L_w \tilde f = - \beta(w) \tilde f \quad \mbox{ for } \quad w \in \fq.
\end{equation}

{\bf Step 4:} For $m \in M$ we choose an element $g_m \in G$ with $g_m.m_0 = m$ 
and put $U_m := g q_M(U_Z)$, so that 
$$ G^{U_m} := q_M^{-1}(U_m) = g_m U_Z H. $$ 
On this open subset of $G$, the function 
$$ F_m \: G^{U_m} \to \GL(V), \quad 
F_m(g) := \tilde f(g_m^{-1}g) $$ 
satisfies 
\begin{description}
\item[\rm(a)] $F_m(gh) = \rho(h)^{-1} F_m(g)$ for $g \in G^{U_m}$, $h \in H$.
\item[\rm(b)] $L_w F_m = -\beta(w) F_m$ for $w \in \fq$. 
\item[\rm(c)] $F_m(g_m) = \1 = \id_V$. 
\end{description}

Next we note that the function 
$F_m$ defines a smooth trivialization 
$$ \phi_m \: U_m \times V \to \bV\res_{U_m} = G^{U^m} \times_H V, \quad 
(gH, v) \mapsto [g, F_m(g)v]. $$
The corresponding transition functions are given by 
$$ \phi_{m,n} \: U_{m,n} := U_m \cap U_n \to \GL(V), \quad 
gH \mapsto \tilde\phi_{m,n}(g) := F_m(g)^{-1} F_n(g). $$ 

To verify that these transition functions are holomorphic, 
we have to show that the functions 
$\tilde\phi_{m,n} \: G^{U_{m,n}} \to B(V)$ 
are annihilated by the differential operators $L_w$, $w \in \fq$. 
This follows easily from the product rule and (b): 
\begin{align*}
L_w(F_m^{-1} F_n) 
&= L_w(F_m^{-1}) F_n + F_m^{-1} L_w(F_n)
= - F_m^{-1} L_w(F_m) F_m^{-1} F_n + F_m^{-1} L_w(F_n)\\
&= F_m^{-1}(\beta(w) F_m F_m^{-1}- \beta(w)) F_n 
= F_m^{-1}(\beta(w) - \beta(w)) F_n = 0.
\end{align*}
We conclude that the transition functions $(\phi_{m,n})_{m,n \in M}$ 
define a holomorphic vector bundle atlas on $\bV$. 

{\bf Step 5:} To see 
that this holomorphic structure is determined uniquely 
by $\beta$, pick an open connected subset $U \subeq M$ containing 
$m_0$ on which the bundle is holomorphically trivial and 
the trivialization is specified by a 
smooth function $F \: G^U \to \GL(V)$. 
Then $L_w F = -\beta(w)F$ implies that 
$\beta(w) = - (L_wF)(\1) F(\1)^{-1}$ 
is determined uniquely by the  holomorphic structure on $\bV$. 

{\bf Step 6:} The above construction also shows that, for 
any holomorphic vector bundle structure on $\bV$, for which 
$G$ acts by holomorphic bundle automorphisms, we may consider 
\begin{equation}
  \label{eq:beta}
\beta \: \fq \to B(V), \quad \beta(w) := - (L_wF)(\1) F(\1)^{-1} 
\end{equation}
for a local trivialization given by $F \: G^U \to \GL(V)$, where 
$m_0 \in U$. Then 
$\beta$ is a continuous linear map. 
To see that it does not depend on the choice of $F$, we note that, 
for any other trivialization $\tilde F \: G^U \to \GL(V)$, 
the function $F^{-1}\cdot \tilde F$ factors through a holomorphic 
function on $U$, so that 
$$ 0 = L_w(F^{-1}\tilde F)(\1) 
= F(\1)^{-1}\Big(- (L_w F)(\1)F(\1)^{-1} + (L_w \tilde F)(\1)\tilde F(\1)^{-1}\Big)\tilde F(\1). $$
We conclude that the right hand side of \eqref{eq:beta} 
does not depend on the choice of $F$. 
Applying this to functions of the form 
$F_g(g') := F(gg')$, defined on $g^{-1}G^U$, we obtain in particular 
\[ -\beta(w) = -(L_w f)(\1)F(\1)^{-1} = (L_w F_g)(\1) F_g(\1)^{-1}
= (L_w F)(g) F(g)^{-1}, \]
so that $L_w F = -\beta(w)F$. 

For $g= h \in H$, we obtain  with \eqref{eq:liederrel} 
\begin{align*}
-\beta(w) &= (L_w F_h)(\1) F(h)^{-1}
= (L_w F)(h) F(\1)^{-1}\rho(h)\\
&= \rho(h)^{-1}(L_{\Ad(h)w}F)(\1) F(\1)^{-1}\rho(h)
= -\rho(h)^{-1}\beta(\Ad(h)w)\rho(h).
\end{align*}
For $w \in \fh$, the relation $\beta(w) = \dd\rho(w)$ 
follows immediately from the $H$-equivariance of~$F$. 
To see that $\beta$ is an extension of $\rho$, it now remains 
to verify that it is a homomorphism of Lie algebras. 
For $w_1, w_2 \in \fq$, we have 
\begin{align*}
\beta([w_1, w_2])F 
&= - L_{[w_1, w_2]}F
= L_{w_2} L_{w_1} F - L_{w_1} L_{w_2} F \\
&= -L_{w_2} \beta(w_1) F + L_{w_1} \beta(w_2)F 
= \beta(w_1)  \beta(w_2)F - \beta(w_2)\beta(w_1)F,
\end{align*}
which shows that $\beta$ is a homomorphism of Lie algebras. 
\end{prf} 

\begin{ex}  We consider the special case  
where $G$ is contained as a Lie subgroup in 
a complex Banach--Lie group $G_\C$ with Lie algebra $\g_\C$ 
and where $Q := \la \exp \fq \ra$ is a Lie subgroup of $G_\C$ 
with $Q \cap G = H$. Then the orbit mapping $G \to G_{\C}/Q, 
g \mapsto g Q$, induces an open 
embedding of $M = G/H$ as an open $G$-orbit in the complex manifold
$G_{\C}/Q$.

In this case every holomorphic representation 
$\pi \: Q \to \GL(V)$ defines an associated holomorphic Banach 
bundle $\bV := G_\C \times_Q V$ over the complex manifold 
$G_\C/Q \cong G/H$. 

Since the Lie algebra $\g_\C$ need not be integrable in the sense 
that it is the Lie algebra of a Banach--Lie group 
(cf.~\cite[Sect.~6]{GN03}), 
the assumption $G \subeq G_\C$ is not general enough to 
cover every situation. One therefore needs the general Theorem~\ref{thm:a.2}.

In \cite[Sect.~6]{GN03} one finds examples 
of simply connected Banach--Lie groups 
$G$ for which $\g_\C$ is not integrable. Here 
$G$ is a quotient of a simply connected Lie group 
$\hat G$ by a central subgroup $Z \cong \R$, 
$\hat G$ has a simply connected universal complexification 
$\eta_{\hat G} \: {\hat G} \to {\hat G}_\C$, and the subgroup 
$\exp_{{\hat G}_\C}(\fz_\C)$ is not closed; its closure is a $2$-dimensional 
torus. 

Then the real Banach--Lie group $G$ acts smoothly and faithfully 
by holomorphic maps on a complex Banach 
manifold $M$ for which $\g_\C$ is not integrable. 
It suffices to pick a suitable tubular neighborhood 
${\hat G} \times U \cong {\hat G} \exp(U) \subeq {\hat G}_\C$, 
where 
$U \subeq i\hat\fg$ is a convex $0$-neighborhood. 
Then the quotient 
$M := {\hat G}/Z \times (U + i\z)/i\z$ is a complex manifold on which 
$G \cong {\hat G}/Z$ acts faithfully, but $\g_\C$ is not integrable. 
\end{ex}

\section{Hilbert spaces of holomorphic sections} \mlabel{sec:2}

In this section we take a closer look at 
Hilbert spaces of holomorphic sections in $\Gamma(\bV)$ for 
holomorphic Hilbert bundles $\bV$ constructed with the methods from 
Section~\ref{sec:1}. Here we are interested in Hilbert spaces with 
continuous point evaluations (to be defined below) 
on which $G$ acts unitarily. This leads to the concept of a 
holomorphically induced representation, which for infinite dimensional 
fibers is a little more subtle than in the finite dimensional case. 
The first key result of this section is 
Theorem~\ref{thm:5.5} relating the commutant of a 
holomorphically induced unitary $G$-representation 
$(\pi, \cH)$ to the commutant of the representation 
$(\rho,\beta)$ of $(H,\fq)$ on the fiber~$V$.
This result is complemented by Theorem~\ref{thm:a.3} which is a 
recognition devise for holomorphically induced representations.

\begin{defn} Let $q \: \bV \to M$ be a holomorphic Hilbert bundle 
on the complex manifold $M$. We write $\Gamma(\bV)$ for the space of 
holomorphic sections of $\bV$. A Hilbert subspace 
$\cH \subeq \Gamma(\bV)$ is said to have {\it 
continuous point evaluations} if all the evaluation maps 
\[ \ev_m \: \cH \to \bV_m, \quad s \mapsto s(m)\] 
are continuous and the function $m \mapsto \|\ev_m\|$ is locally bounded. 

%{\bf Do we have to ask for more? Check \cite{Ne00}. 
%Eventually we want that, locally the $B(V)$-kernel is 
%holomorphic on $M \times \oline M$, resp., that the 
%correponding kernel on $\bV^*$ is holomorphic.} 

If $G$ is a group acting on $\bV$ by holomorphic 
bundle automorphisms, $G$ acts on $\Gamma(\bV)$ by 
\begin{equation}
  \label{eq:g-act}
(g.s)(m) := g.s(g^{-1}m).   
\end{equation}
We call a Hilbert subspace $\cH \subeq \Gamma(\bV)$ with continuous 
point evaluations {\it $G$-invariant} if 
$\cH$ is invariant under the action defined by \eqref{eq:g-act} 
and the so obtained representation of $G$ on $\cH$ is 
unitary. 
\end{defn}

\begin{rem} \mlabel{rem:2.2} 
Let $q \: \bV \to M$ be a holomorphic Hilbert bundle 
over $M$. Then we can represent holomorphic sections of this 
bundle by holomorphic functions on the dual bundle $\bV^*$ 
whose fiber $(\bV^*)_m$ is the dual space  
$\bV_m^*$ of $\bV_m$.  We thus obtain an embedding 
\begin{equation}
  \label{eq:funcdual}
\Psi \: \Gamma(\bV) \to \cO(\bV^*), \quad 
\Psi(s)(\alpha_m) = \alpha_m(s(m)),  
\end{equation}
whose image consists of 
holomorphic functions on $\bV^*$ which are fiberwise linear. 

If $G$ is a group acting on $\bV$ by holomorphic 
bundle automorphisms, then $G$ also acts naturally 
by holomorphic maps on $\bV^*$ via 
$(g.\alpha_m)(v_{g.m}) := \alpha_m(g^{-1}.v_{g.m})$ 
for $\alpha_m \in \bV^*_m$. 
Therefore 
\[ \Psi(g.s)(\alpha_m) 
= \alpha_m(g.s(g^{-1}.m)) 
= (g^{-1}.\alpha_m)(s(g^{-1}.m)) 
= \Psi(s)(g^{-1}.\alpha_m) \] 
implies that $\Psi$ is equivariant with respect to the natural 
$G$-actions on $\Gamma(\bV)$ and $\cO(\bV^*)$. 
\end{rem}

\subsection{Existence of analytic vectors}

\begin{lem}
\mlabel{lem:3.1} 
If $M = G/H$ is a Banach homogeneous space with a $G$-invariant 
complex structure and $\bV = G\times_H V$ a $G$-equivariant 
holomorphic vector bundle over $M$ defined by 
a pair $(\rho,\beta)$ as in {\rm Theorem~\ref{thm:a.2}}, 
then the $G$-action on $\bV$ is analytic.  
\end{lem}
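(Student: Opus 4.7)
The plan is to verify real-analyticity of the action $\sigma: G \times \bV \to \bV$, $\sigma(g_0, [g,v]) = [g_0 g, v]$, locally in the holomorphic atlas constructed in the proof of Theorem~\ref{thm:a.2}. Recall that near each $m \in M$ one has a trivialization $\phi_m : U_m \times V \to \bV|_{U_m}$, $(gH,v) \mapsto [g, F_m(g)v]$, where on the BCH chart near $\1$ the trivializer is given by $F_m(g_m \exp_G(x*y)) = e^{-\beta(y)}$ for $(x,y) \in U_\fr \times U_\fq$, and then extended to all of $G^{U_m}$ by $H$-equivariance via $F_m(g_m z h) = \rho(h)^{-1} F_m(g_m z)$.

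First I would show that each $F_m : G^{U_m} \to \GL(V)$ is real-analytic. The map $F(x * y) = e^{-\beta(y)}$ is holomorphic on $U \subseteq \g_\C$, the BCH splitting $U_\fr \times U_\fq \to U$ is biholomorphic, and $\exp_G$ is analytic on the Banach--Lie group~$G$; hence the local trivializer $f = F \circ \exp_G^{-1}$ is real-analytic on~$U_G$. Next, the norm-continuous representation $\rho : H \to \GL(V)$ is itself real-analytic, since in light of Proposition~\ref{prop:contin} it coincides near~$\1$ with $\exp_{\GL(V)} \circ\, \dd\rho \circ \exp_G^{-1}$ for the continuous Lie algebra morphism $\dd\rho : \fh \to \gl(V)$. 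Combined with the analytic decomposition $Z \times H \to ZH \subseteq G$ coming from the fact that $H$ is split, this yields analyticity of $\tilde f(zh) = \rho(h)^{-1} f(z)$ on $U_Z H$ and, after left-translation by $g_m^{-1}$, of $F_m$ on $G^{U_m}$.

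Second, the action of $G$ on the base $M = G/H$ is real-analytic, which is standard for quotients by split analytic Banach--Lie subgroups. Combining these pieces, the action in the local trivializations reads
\[
\phi_n^{-1} \circ \sigma(g_0, \cdot) \circ \phi_m (gH, v) = \bigl( g_0 gH,\; F_n(g_0 g)^{-1} F_m(g)\, v \bigr),
\]
which is visibly a composition of real-analytic maps: the group product $(g_0, g) \mapsto g_0 g$, the trivializers $F_m, F_n$, inversion on $\GL(V)$, and the continuous bilinear evaluation $(A, v) \mapsto Av$. The main obstacle is the globalization of analyticity of $F_m$: on the BCH chart analyticity is immediate from the holomorphy of $e^{-\beta(y)}$, but on the full domain $G^{U_m}$ one must invoke both the analyticity of $\rho$ (via Proposition~\ref{prop:contin}) and the analytic character of the local product decomposition supplied by the splitness of $H$; once this is in hand, the remainder is a routine assembly.
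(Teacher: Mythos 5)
Your proof is correct, but it takes a genuinely different route from the paper's. The paper's argument is a two-line reduction: $\bV = G\times_H V$ is an analytic quotient of the product manifold $G\times V$, on which $G$ acts analytically by left translation in the first factor, and since the quotient map $G\times V\to\bV$ is a real analytic submersion (it admits local analytic sections), the induced action on $\bV$ is analytic. This uses only the associated-bundle structure and never touches the holomorphic atlas. You instead verify analyticity chart by chart in the atlas from Theorem~\ref{thm:a.2}, which forces you to prove that each trivializer $F_m$ is real-analytic on its whole domain $G^{U_m}$ (holomorphy of $e^{-\beta(y)}$ on the BCH chart, analyticity of $\rho$, and the analytic product decomposition $Z\times H\to ZH$), and then to assemble the cocycle $F_n(g_0g)^{-1}F_m(g)$. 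Your route is longer but yields more explicit information: it exhibits the action in the holomorphic trivializations and makes visible that $G$ acts by fiberwise-linear bundle maps; the paper's route is shorter and more robust, since it does not depend on the particular construction of the trivializers. One small correction: Proposition~\ref{prop:contin} is not the right reference for the analyticity of $\rho$ --- that proposition concerns unitary representations all of whose vectors are smooth, whereas in the setting of Theorem~\ref{thm:a.2} the representation $\rho\:H\to\GL(V)$ is \emph{assumed} norm continuous, and what you actually need is the standard fact that a continuous homomorphism of Banach--Lie groups is automatically analytic (your formula $\rho=\exp_{\GL(V)}\circ\,\dd\rho\circ\exp_H^{-1}$ near $\1$ is exactly the right justification for that fact). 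You should also note, when reading off analyticity from your displayed formula, that the right-hand side is independent of the representative $g$ of $gH$ by the $H$-equivariance of $F_m$ and $F_n$, and that a local analytic section of $G\to G/H$ (available since $H$ is split) is what lets you treat $g$ as an analytic function of $gH$; both points are routine but belong in a complete write-up.
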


\begin{prf} The manifold 
$\bV$ is a $G$-equivariant quotient of the product 
manifold $G \times V$ on which $G$ acts analytically by left 
multiplications in the left factor. Since the quotient map 
$q \: G \times V \to \bV$ is a real analytic submersion, the 
action of $G$ on $\bV$ is also analytic. 
\end{prf}

\begin{defn} Let $M$ be a complex manifold (modeled on a locally 
convex space) and $\cO(M)$ the space of holomorphic complex-valued 
functions on $M$. We write $\oline M$ for the conjugate complex manifold. 
A holomorphic function 
$Q \: M \times \oline M \to \C$
is said to be a {\it reproducing kernel} of a Hilbert subspace 
$\cH \subeq \cO(M)$ if for each $w\in M$ the function 
$Q_w(z) := Q(z,w)$ is contained in $\cH$ and satisfies 
$$ \la f, Q_z \ra = f(z) \quad \mbox{ for } \quad z \in M, f \in \cH. $$
Then $\cH$ is called a {\it reproducing kernel Hilbert space} 
and since it is determined uniquely by the kernel $Q$, it is 
denoted $\cH_Q$ (cf.\ \cite[Sect.~I.1]{Ne00}). 

Now let $G$ be a group and 
$\sigma \: G \times M  \to M, (g,m) \mapsto g.m$ 
be a smooth right action of $G$ on $M$ by holomorphic maps. Then 
 $(g.f)(m) := f(g^{-1}.m)$ defines a unitary representation 
of $G$ on a reproducing kernel Hilbert space $\cH_Q \subeq \cO(M)$ 
if and only if the kernel $Q$ is {\it invariant}: 
$$ Q(g.z, g.w) = Q(z,w) \quad \mbox{ for } \quad z,w \in M, g \in G $$ 
(\cite[Rem.~II.4.5]{Ne00}). In this case we call 
${\cal H}_Q$ a $G$-invariant reproducing kernel Hilbert space. 
\end{defn}

\begin{lem} \mlabel{lem:2.2} 
Let $G$ be a Banach--Lie group 
acting analytically via 
\[\sigma \: G \times M \to M, \quad 
(g,m) \mapsto \sigma_g(m) \] 
by holomorphic maps on the complex manifold~$M$. 

{\rm(a)} Let $\cH \subeq \cO(M)$ be a reproducing kernel Hilbert 
space whose kernel $Q$ is a $G$-invariant holomorphic function on 
$M \times \oline M$. 
Then the elements $(Q_m)_{m \in M}$ representing the evaluation 
functionals in $\cH$ are analytic 
vectors for the representation of $G$, defined by 
$\pi(g)f := f \circ \sigma_g^{-1}$. 

{\rm(b)} Let $\bV \to M$ be a holomorphic $G$-homogeneous 
Hilbert bundle and $\cH \subeq \Gamma(\bV)$ be a $G$-invariant 
Hilbert space with continuous point evaluations. 
Then every vector of the form 
$\ev_m^*v$, $m \in M$, $v \in \bV_m$, is analytic for the 
$G$-action on $\cH$. 
\end{lem}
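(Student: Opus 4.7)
The plan for (a) is to show that the orbit map $\omega_m\:G \to \cH$, $g \mapsto \pi(g) Q_m$, is real analytic; this is the definition of $Q_m$ being an analytic vector. By the $G$-invariance of the kernel, a short computation gives $\pi(g) Q_m = Q_{g.m}$, so $\omega_m$ factors as $g \mapsto g.m \mapsto Q_{g.m}$. The first arrow is the orbit map $G \to M$, which is real analytic by the assumption that the $G$-action on $M$ is analytic. Hence it suffices to verify that the map $\iota \: M \to \cH$, $w \mapsto Q_w$, is real analytic. I will in fact show that $\iota$ is antiholomorphic. Weak antiholomorphy is immediate: for every $f \in \cH$, the scalar map $w \mapsto \la Q_w, f\ra_\cH = \overline{f(w)}$ is antiholomorphic because $f \in \cO(M)$. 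Since $\|Q_w\|^2 = Q(w,w)$ is continuous (being the restriction to the antidiagonal of the holomorphic kernel on $M \times \oline M$), $\iota$ is locally bounded. A standard Dunford-type theorem then upgrades weak antiholomorphy to antiholomorphy as a map into $\cH$, and antiholomorphic maps between real-analytic manifolds are real analytic. Composition yields real analyticity of $\omega_m$.

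For (b), my plan is to reduce to (a) via the $G$-equivariant embedding $\Psi \: \Gamma(\bV) \to \cO(\bV^*)$ of Remark~\ref{rem:2.2}. Transporting the Hilbert structure of $\cH$ through $\Psi$ presents $\Psi(\cH) \subeq \cO(\bV^*)$ as a reproducing kernel Hilbert space on the complex manifold $\bV^*$; its kernel $Q$ is $G$-invariant, since uniqueness of the reproducing kernel together with the $G$-invariance of $\Psi(\cH)$ forces this. Using the Riesz isomorphism $\bV_m \to \bV_m^*$, $v \mapsto v^*$, a one-line check gives $\Psi(\ev_m^* v) = Q_{v^*}$ (both sides are determined by the identity $\la s, \ev_m^* v\ra_\cH = v^*(s(m))$ for $s \in \cH$). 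Applying Lemma~\ref{lem:3.1} to the dual isotropy representation identifies $\bV^*$ as a $G$-homogeneous holomorphic Hilbert bundle on which $G$ acts analytically by holomorphic bundle automorphisms, so the hypotheses of (a) are in force for $\Psi(\cH)$. Part (a) then produces $Q_{v^*}$ as an analytic vector, and pulling back through the $G$-equivariant isometry $\Psi$ shows that $\ev_m^* v$ is analytic in $\cH$.

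The only real obstacle is the upgrade from weak antiholomorphy of $\iota$ to antiholomorphy as a Banach-space-valued map; this is precisely where the local boundedness of $w \mapsto \|\ev_w\|$ (assumed for $\cH$, and automatic from continuity of the kernel $Q$ in case (a)) enters. Once this ingredient is in place, both parts are formal consequences of the factorization of orbit maps through the natural geometric objects, and the dualization needed in (b) is entirely routine given Remark~\ref{rem:2.2} and Lemma~\ref{lem:3.1}.
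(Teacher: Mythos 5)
Your argument is correct, and part (b) coincides with the paper's proof (realize $\cH$ in $\cO(\bV^*)$ via the equivariant embedding $\Psi$ of Remark~\ref{rem:2.2}, observe that $\Psi(\ev_m^*v)$ is the kernel vector attached to $\alpha_m = \la\cdot,v\ra$, and reduce to (a)). For part (a), however, you take a genuinely different route. The paper does not analyze the orbit map at all: it merely checks that the single scalar matrix coefficient $g \mapsto \la \pi(g)Q_m, Q_m\ra = Q(g^{-1}m,m)$ is real analytic (immediate from the holomorphy of $Q$ on $M \times \oline M$ and the analyticity of the action) and then invokes the nontrivial external result \cite[Thm.~5.2]{Ne10b}, which says that for a unitary representation a vector whose diagonal matrix coefficient is analytic near $\1$ is automatically an analytic vector. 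You instead establish strong analyticity of the orbit map directly, via the factorization $g \mapsto g.m \mapsto Q_{g.m}$ and the antiholomorphy of the coherent-state map $w \mapsto Q_w$, upgraded from weak antiholomorphy by local boundedness of $w\mapsto Q(w,w)$ (a Dunford-type weak-to-strong principle, available in the Banach setting, e.g.\ in the spirit of \cite[App.~A.III]{Ne00}). Your version is more self-contained -- it replaces the heavy weak-analyticity theorem of \cite{Ne10b} by a standard vector-valued holomorphy lemma -- and it yields the slightly stronger and independently useful statement that $w \mapsto Q_w$ is antiholomorphic as a map $M \to \cH$; the paper's version is shorter and illustrates the general principle that one scalar matrix coefficient controls analyticity of the vector. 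Both are valid; just note that in (b) you should (as you do, via Lemma~\ref{lem:3.1} for the dual bundle) make sure the $G$-action on $\bV^*$ is analytic and that the point evaluations of $\Psi(\cH)$ are locally bounded, since these are the hypotheses your version of (a) consumes.
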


\begin{prf} (a) Since $Q$ is holomorphic on 
$M \times \oline M$, it is in particular real analytic. 
For $m \in M$, we have 
\[ \la \pi(g)Q_m, Q_m \ra 
= (\pi(g)Q_m)(m) = Q_m(g^{-1}m) = Q(g^{-1}m,m), \]
and since $Q$ and the $G$-action are real analytic, 
this function is analytic on $M$. Now \cite[Thm.~5.2]{Ne10b} 
implies that $Q_m$ is an analytic vector. 

(b) As in Remark~\ref{rem:2.2}, we 
realize $\Gamma(\bV)$ by holomorphic functions on the dual 
bundle $\bV^*$. We thus obtain a 
reproducing kernel Hilbert space 
$\cH_Q := \Psi(\cH) \subeq \cO(\bV^*)$, 
and since $\Psi$ is $G$-equivariant, the reproducing 
kernel $Q$ is $G$-invariant. 
For $v \in \bV_m$, evaluation in the corresponding element 
$\alpha_m := \la \cdot, v \ra \in \bV^*_m$ is given  by 
\[ s \mapsto \la s(m), v \ra = \la \ev_m(s), v\ra 
= \la s, \ev_m^*v\ra.\] 
For the corresponding $G$-invariant kernel $Q$ on $\bV^*$ 
this means that $Q_{\alpha_m} = \ev_m^*v$, so that the assertion 
follows from (a). 
\end{prf}

\subsection{The endomorphism bundle and commutants} 

The goal of this section is Theorem~\ref{thm:5.5} which connects 
the commutant of the $G$-representation on a Hilbert space $\cH_V$ 
of a holomorphically induced representation with the 
corresponding representation $(\rho, \beta)$ 
of $(H,\fq)$ on $V$. The remarkable 
point is that, under natural assumptions, these commutants are isomorphic, 
so that both representations have the same decomposition theory. 
This generalizes an important 
result of S.~Kobayashi concerning irreducibility 
criteria for the $G$-representation on $\cH_V$ (\cite{Ko68}, 
\cite[Thm.~2.5]{BH98}). 

Let $\bV = G \times_H V \to M$ be a $G$-homogeneous holomorphic 
Hilbert bundle as in Theorem~\ref{thm:a.2}. Then 
the complex manifold $M \times \oline M$ is a complex homogeneous 
space $(G \times G)/(H \times H)$, 
where the complex structure is defined by the closed subalgebra 
$\fq \oplus \oline\fq$ of $\g_\C \oplus \g_\C$. 
On the Banach space $B(V)$ we consider the norm continuous 
representation of $H \times H$ by 
\[ \tilde\rho(h_1, h_2)A = \rho(h_1)A\rho(h_2)^* \] 
and the the corresponding extension 
$\tilde\beta \: \fq \oplus \oline \fq \to \gl(B(V))$ by 
\[ \tilde\beta(x_1,x_2)A := \beta(x_1)A + A \beta(\oline{x_2})^*.\]
We write $\bL := (G \times G) \times_{H \times H} B(V)$ for the 
corresponding holomorphic Banach bundle over 
$M \times \oline M$ (Theorem~\ref{thm:a.2}). 

\begin{rem} For every $g \in G$, we have isomorphisms 
$V \to \bV_{gH} = [g,V], v \mapsto [g,v]$. 
Accordingly, we have for every pair $(g_1, g_2) \in G \times G$ an 
isomorphism 
\[ \nu \: B(V) \to B(\bV_{g_2 H}, \bV_{g_1H}), \quad 
\nu(A)[g_2,v] \mapsto [g_1,Av]. \] 
This defines a map 
\[ \gamma \: G \times G \times B(V) \to 
B(\bV) := \bigcup_{m,n \in M} B(\bV_m,\bV_n), \quad 
\gamma(g_1, g_2,A)[g_2,v] = [g_1, Av]. \] 
For $h_1, h_2 \in H$, we then have 
\begin{align*}
&\gamma(g_1h_1, g_2h_2,\tilde\rho(h_1, h_2)^{-1}A)[g_2,v] 
= \gamma(g_1h_1, g_2h_2,\tilde\rho(h_1, h_2)^{-1}A)[g_2h_2,\rho(h_2)^{-1}v] \\
&= [g_1 h_1, \rho(h_1)^{-1} A v] 
= [g_1, A v] = \gamma(g_1, g_2, A)[g_2,v], 
\end{align*}
so that $\gamma$ factors through a bijection 
$\oline\gamma \: \bL \to B(\bV).$ 
This provides an interpretation of the bundle $\bL$ as the 
{\it endomorphism bundle of $\bV$} (cf.\ \cite{BR07}). 
\end{rem}

Since the group $G$ acts (diagonally) on the bundle 
$\bL$, it makes sense to consider $G$-invariant holomorphic 
sections. 

\begin{lem} \mlabel{lem:2.7} 
The space $\Gamma(\bL)^G$ of $G$-invariant holomorphic 
sections of $\bL$ has the property that the evaluation map 
\[ \ev \: \Gamma(\bL)^G \cong C^\infty(G \times G, B(V))_{\tilde \rho, 
\tilde\beta} \to B_H(V), \quad s \mapsto \hat s(\1,\1) \] 
is injective. 
\end{lem}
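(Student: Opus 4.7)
My strategy is to reduce $\hat s$ to a single $B(V)$-valued function on $G$ using diagonal $G$-invariance, and then to conclude injectivity by an identity-principle argument applied to the maximally totally real diagonal $\Delta \subseteq M \times \oline M$.

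Diagonal $G$-invariance $\hat s(gg_1, gg_2) = \hat s(g_1, g_2)$, with the choice $g = g_1^{-1}$, gives $\hat s(g_1, g_2) = F(g_1^{-1} g_2)$, where $F(g) := \hat s(\1, g)$. The $H \times H$-equivariance of $\hat s$, which since $\rho$ is unitary reads $\hat s(g_1 h_1, g_2 h_2) = \rho(h_1)^{-1} \hat s(g_1, g_2) \rho(h_2)$, translates through this identification into
\[
F(hgh') = \rho(h) F(g) \rho(h') \quad \text{for } h, h' \in H,\ g \in G.
\]
Specializing to $g = \1$ gives $\rho(h) F(\1) = F(\1) \rho(h)$, so $F(\1) \in B_H(V)$, confirming that $\ev$ lands in $B_H(V)$.

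For injectivity, I would assume $\hat s(\1, \1) = 0$ and view $s$ as a holomorphic section of $\bL$ over $M \times \oline M$. The diagonal $\Delta := \{(m, m) : m \in M\}$ is a real-analytic maximally totally real submanifold: at a point $(m, m)$, the ambient complex structure $J(u, v) = (iu, -iv)$ on $T_m M \oplus T_m \oline M$ satisfies $T_{(m, m)}\Delta \cap J T_{(m, m)}\Delta = \{0\}$ and $T_{(m, m)}\Delta \oplus J T_{(m, m)}\Delta = T_{(m, m)}(M \times \oline M)$. Since the diagonal $G$-action is transitive on $\Delta$, $G$-invariance of $s$ together with $s(m_0, m_0) = 0$ force $s|_\Delta \equiv 0$. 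The identity principle for holomorphic sections of a holomorphic Banach bundle then yields $s \equiv 0$ (assuming $M$ connected, as follows from transitivity of the $G$-action when $G$ is connected): in a local trivialization at a point of $\Delta$, the section becomes a $B(V)$-valued holomorphic map on an open subset of a complex Banach space vanishing on the image of $\Delta$, and all its Taylor coefficients at each such point vanish, reducing (via pairing with $\varphi \in V^*$ and $v \in V$) to the scalar one-variable fact that a holomorphic function on a disk vanishing on a real diameter is identically zero.

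The main obstacle is the infinite-dimensional identity principle. A fully elementary alternative is to exploit the differential conditions on $\hat s$ directly: substituting $\hat s(g_1, g_2) = F(g_1^{-1} g_2)$ into $L_{(x_1, x_2)} \hat s = -\tilde\beta(x_1, x_2) \hat s$ yields $R_x F = \beta(x) F$ for $x \in \fq$ and $L_{\oline y} F = -F \beta(y)^*$ for $y \in \fq$. At $\1$, these determine all first $\g_\C$-derivatives of $F$ from $F(\1)$ (since $\fq + \oline\fq = \g_\C$), and iteration (using $[L_z, R_{z'}] = 0$ and the observation that $L_z F$ and $R_z F$ again satisfy both equations) shows that every iterated derivative of $F$ at $\1$ is determined by $F(\1)$. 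Combined with the real-analyticity of $F$ inherited from the analytic construction of local trivializations in Theorem~\ref{thm:a.2} (the local trivialization $F_{\mathrm{loc}}(\exp z) = e^{-\tilde\beta(z_\fq)}$ is analytic since $\exp$ and the BCH decomposition are), this gives $F \equiv 0$ on the identity component of $G$, hence injectivity.
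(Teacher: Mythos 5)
Your proposal is correct and takes essentially the same route as the paper: the paper's proof consists precisely of observing that $\hat s(\1,\1)=0$ forces the section to vanish on the totally real submanifold $\Delta_M = G(m_0,m_0)$ (by diagonal $G$-invariance) and hence, by the identity principle, on all of $M\times\oline M$, together with the short computation $\hat s(\1,\1)=\hat s(h,h)=\rho(h)^{-1}\hat s(\1,\1)\rho(h)$ showing the evaluation lands in $B_H(V)$. Your reduction to $F(g)=\hat s(\1,g)$ and your supplementary argument via the equations $R_xF=\beta(x)F$, $L_{\oline y}F=-F\beta(y)^*$ and iterated derivatives at $\1$ are correct elaborations (the latter a genuinely more elementary backup) of details the paper leaves implicit.
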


\begin{prf} If $\hat s(\1,\1) = 0$, then 
the corresponding holomorphic section $s \in \Gamma(\bL)$ vanishes on the 
totally real submanifold 
\[ \Delta_M := \{ (m,m) \: m \in M \} = G(m_0, m_0)\subeq 
M \times \oline M,\] 
and hence on all of $M \times \oline M$. 
For the function $\hat s \: G \times G \to B(V)$, we 
have for $h \in H$ 
\[ \hat s(\1,\1) = (h^{-1}.\hat s)(\1,\1) 
= \hat s(h, h) = \rho(h)^{-1} \hat s(\1,\1) \rho(h), \] 
showing that $\hat s(\1,\1)$ lies in $B_H(V)$. 
\end{prf}

\begin{rem} In general, the holomorphic bundle 
$\bL$ does not have any non-zero holomorphic section, although 
its restriction to the diagonal $\Delta_M$ 
has the trivial section $R$ given by $R_{(m,m)} = \id_{\bV_m}$ 
for every $m\in M$. 

If $R \: M \times \oline M \to \bL$ is a holomorphic section, 
then we obtain for every $v \in V$ and $n \in M$ a holomorphic section 
$R_{n,v} \: M \to \bV, m \mapsto R(m,n)v$    
which is non-zero in $m$ if $R_{m,m}v \not=0$. 
Therefore $\bV$ has nonzero holomorphic sections if $\bL$ does, and this 
is not always the case. 
\end{rem}

\begin{ex} \mlabel{ex:2.11}  
(a) Let $(\pi, \cH)$ be a unitary representation of 
$G$ and $\Psi \: \cH \to \Gamma(\bV)$ be $G$-equivariant 
such that the evaluation maps $\ev_m \circ \Psi \: \cH \to \bV_m$ 
are continuous and $m \mapsto \|\ev_m\|$ is locally bounded. 
Then 
\[ R(m,n) := (\ev_m \Psi) (\ev_n \Psi)^* \in B(\bV_n, \bV_m)\]  
defines a holomorphic section of $B(\bV)$. 
Since this  assertion is local, it follows from 
the corresponding assertion for 
trivial bundles treated in \cite[Lemma~A.III.9(iii)]{Ne00}. 
The corresponding smooth function 
\[ \hat R  \: G \times G \to B(V), \quad 
\hat R(g_1, g_2) = (\ev_{g_1} \Psi) (\ev_{g_2} \Psi)^*\] 
is a $G$-invariant $B(V)$-valued 
kernel on $G \times G$ because 
\[ \ev_{gh} \Psi = \ev_h \Psi \circ \pi(g)^{-1} 
\quad \mbox{ for } \quad g,h \in G. \] 
We conclude that $\hat R \in \Gamma(\bL)^G$, so that it is completely 
determined by 
\[ \hat R(\1,\1) = (\ev_\1 \Psi) (\ev_\1 \Psi)^*\] 
(Lemma~\ref{lem:2.7}). 
In particular, $\Psi$ is completely determined by $\ev_\1 \circ \Psi$. 
This follows also from its equivariance, which leads to 
$\Psi(v)(g) = \ev_\1(g^{-1}.\Psi(v)) 
= \ev_\1\Psi(\pi(g)^{-1}v).$ 

(b) For $A, B \in B(V)$ commuting 
with $\rho(H)$ and $\beta(\fq)$ and $\hat R \in C^\infty(G \times G, 
B(V))_{\tilde\rho, \tilde\beta}$, the function 
\[ \hat R^{A,B} \: G \times G \to B(V), \quad 
\hat R^{A,B}(g_1, g_2) = A R(g_1, g_2) B\] 
is also contained in $C^\infty(G \times G, B(V))_{\tilde\rho, \tilde\beta}$, 
hence defines a holomorphic section of the bundle~$\bL$. 
\end{ex} 

\begin{defn} \mlabel{def:2.10} 
(a) For $(\rho, \beta)$ as in Theorem~\ref{thm:a.2} 
and the corresponding associated bundle $\bV = G \times_H V$, 
a unitary representation $(\pi, \cH)$ of $G$ is said to be 
{\it holomorphically induced from $(\rho,\beta)$} 
if there exists a realization 
$\Psi \: \cH \to \Gamma(\bV)$ as an invariant Hilbert 
space with continuous point evaluations whose kernel 
$R \in \Gamma(\bL)^G$ satisfies $\hat R(\1,\1) = \id_V$. 

(b) Since this condition determines $R$ by Lemma~\ref{lem:2.7}, 
it also determines the reproducing kernel space $\Psi(\cH)$ 
and its norm. 
We conclude that, for every pair $(\rho,\beta)$, 
there is at most one holomorphically induced unitary representation 
of $G$ up to unitary equivalence. 
Accordingly, we call $(\rho, \beta, V)$ {\it inducible} 
if there exists a corresponding holomorphically induced unitary 
representation of~$G$. 

If this is the case, then we use the isometric embedding 
$\ev_\1^* \: V  \to \cH$ to identify $V$ with a subspace of 
$\cH$ and note that 
the evaluation map $\ev_\1 \: \cH \to V$ corresponds to the
orthogonal projection $p_V \: \cH \to V$. 
\end{defn}

\begin{rem} \mlabel{rem:2.13} (a) If $\cH_V  \into \Gamma(\bV)$ 
is a $G$-invariant Hilbert 
space on which the representation is holomorphically induced, 
then we obtain a $G$-invariant element $Q \in B(\bV)$ by  
\[ Q(m,n) := \ev_m \ev_n^* \in B(\bV_n, \bV_m),\] 
and the relation $\ev_g = \ev_\1 \circ \pi(g)^{-1} 
= p_V \circ \pi(g)^{-1}$ yields 
\[ \hat Q(g_1, g_2) = \ev_{g_1} \ev_{g_2}^*  
= p_V \pi(g_1)^{-1} \pi(g_2)p_V = p_V \pi(g_1^{-1}g_2)p_V \] 
and in particular $\hat Q(\1,\1) = \id_V.$

(b) Suppose that $\cH \subeq \Gamma(\bV)$ is holomorphically 
induced. Since $\ev_\1$ is $H$-equivariant, the closed subspace 
$V \subeq \cH$ is $H$-invariant and the $H$-representation 
on this space is equivalent to $(\rho,V)$, hence in particular 
bounded. 

Moreover, $V \subeq \cH^\omega$ follows from 
Lemma~\ref{lem:2.2}(b). Since the evaluation maps \break $\ev_g \: \cH \to V$ 
separate the points, the analyticity of the elements 
$\ev_g^*v$ even implies that $\cH^\omega$ is dense in $\cH$. 

For $x \in \fq$  and $s \in \cH^\infty$ we have 
\[ \ev_\1\dd\pi(x) s 
= (\dd\pi(x)s)\,\hat{}(\1) 
= - L_x \hat  s(\1) 
=  \beta(x) \hat s(\1) 
=  \beta(x) \ev_\1 s.\]
Therefore $\dd\pi(\fq)$ preserves the subspace 
$\cH^\infty \cap V^\bot = \cH^\infty \cap \ker(\ev_\1)$. 
From $V \subeq \cH^\infty$, we derive 
\[ \cH^\infty = V \oplus (V^\bot \cap \cH^\infty), \] 
so that the density of $\cH^\infty$ in $\cH$ implies that 
$V = (V^\bot \cap \cH^\infty)^\bot$, and hence that 
this space is invariant under the restriction 
$\dd\pi(\oline x)$ of the adjoint $- \dd\pi(x)^*$. 
For $s_j = \ev_\1^*v_j \in V$, $j =1,2$,  we further obtain 
\begin{align*}
\la \dd\pi(\oline x)s_1, s_2 \ra 
&=  -\la \ev_\1^* v_1, \dd\pi(x) s_2 \ra 
=  -\la v_1, \ev_\1\dd\pi(x) s_2 \ra 
=  -\la v_1, \beta(x) \ev_\1 s_2 \ra \\
&=  -\la v_1, \beta(x) v_2 \ra 
=  -\la \beta(x)^*v_1,v_2 \ra,
\end{align*}
so that 
\begin{equation}\label{eq:a3} 
\dd\pi(\oline x)\res_V = - \beta(x)^*, \quad x \in \fq.
\end{equation}

Finally we observe that 
\[ (\pi(G)V)^\bot = \{ s \in \cH \: (\forall g \in G)\, \hat s(g) = 0\}
= \{0\}\] 
 implies that $\cH = \oline{\Spann(\pi(G)V)}$. 
\end{rem}

If $V$ is of the form $\oline{(\cH^\infty)^\fn}$ for a subalgebra 
$\fn \subeq \g_\C$, then it is invariant under the commutant $B_G(V)$, 
but we do not know if this is always true for holomorphically 
induced representation. 
To make the following proposition as flexible as possible, we 
assume this naturality condition of~$V$ (cf.\ Remark~\ref{rem:2.14} below).

\begin{thm} \mlabel{thm:5.5} 
Suppose that $(\pi, \cH_V)$ is holomorphically induced from 
the representation $(\rho,\beta)$ of $(H,\fq)$ on~$V$ 
and that 
\[ B_{H,\fq}(V) := 
\{ A \in B_H(V) \: (\forall x \in \fq)\, 
A \beta(x) = \beta(x) A, A^* \beta(x) = \beta(x) A^* \} \] 
is the involutive commutant of $\rho(H)$ and $\beta(\fq)$. 
If $V$ is invariant under $B_G(V)$, then the map 
\[ R \: B_G(\cH_V) \to B_{H,\fq}(V), \quad  
A \mapsto A\res_V \]
is an isomorphism of von Neumann algebras. 
\end{thm}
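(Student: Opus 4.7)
The plan is to verify that $R$ is a well-defined injective $*$-homomorphism into $B_{H,\fq}(V)$, and then to establish its surjectivity by invoking the standard reduction theorem for von Neumann algebras, reducing the problem to the algebraic identification $(p_V \cM p_V)' = B_{H,\fq}(V)$, where $\cM := \pi(G)''$ and $p_V \in B(\cH_V)$ denotes the orthogonal projection onto $V$.

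For the basic properties: the hypothesis that $V$ is invariant under the $*$-algebra $\cM' = B_G(\cH_V)$ forces every $A \in \cM'$ to commute with $p_V$; equivalently $p_V \in (\cM')' = \cM$. Hence $R(A) := A|_V$ is bounded and $R$ is a $*$-homomorphism. To see that $R(A) \in B_{H,\fq}(V)$, commutation with $\rho(H)$ is immediate, while commutation with $\beta(\fq)$ uses that $A$ preserves $\cH^\infty$ and commutes there with $\dd\pi(x)$ for $x \in \g$ (hence for $x \in \g_\C$ by complex linearity), combined with the key identity $p_V \dd\pi(x) = \beta(x) p_V$ on $\cH^\infty$ for $x \in \fq$ from Remark~\ref{rem:2.13}; the analogous argument applied to $A^*$ gives the $B^*$-relation. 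Injectivity follows from $R(A) = 0 \Rightarrow A\pi(g)v = \pi(g)Av = 0$ for every $g \in G,\ v \in V$, together with the density $\cH_V = \oline{\Spann(\pi(G)V)}$ (Remark~\ref{rem:2.13}).

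Because $p_V \in \cM$ and the central support of $p_V$ in $\cM'$ equals $\mathbf{1}$ (using $\pi(G) \subseteq \cM'$ and the density above), the standard reduction theorem guarantees that $R$ maps $\cM'$ $*$-isomorphically onto $(p_V \cM p_V)'$, so it suffices to prove the identification $(p_V \cM p_V)' = B_{H,\fq}(V)$. The inclusion $(p_V \cM p_V)' \subseteq B_{H,\fq}(V)$ follows by the same differentiation argument as above, with $\pi(g)$ replaced by $p_V \pi(g) p_V$ throughout.

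For the reverse inclusion, fix $B \in B_{H,\fq}(V)$ and $v, w \in V$. The required commutation amounts to equality on $G$ of the two functions
\[
\psi_1(g) := \la \pi(g)v, B^*w \ra, \qquad \psi_2(g) := \la \pi(g)Bv, w \ra,
\]
both of which are real-analytic because $V \subseteq \cH^\omega$ (Lemma~\ref{lem:2.2}). They agree on $H$ by the $H$-equivariance of $B$; to extend this equality to all of $G$, the plan is to show that every iterated left-invariant derivative $L_{x_1}\cdots L_{x_n}(\psi_1 - \psi_2)$ vanishes at every $h \in H$, whence real-analyticity and a routine connectedness argument (applied component-by-component if $G$ is disconnected) forces $\psi_1 = \psi_2$ globally. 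Using the intertwining $\pi(h) p_V = p_V \pi(h)$, such a derivative at $h$ reduces to verifying that $B$ commutes with $p_V \dd\pi(X) p_V$ on $V$ for every $X \in U(\g_\C)$. The PBW decomposition associated to the splitting $\g_\C = \oline\fq + \fq$, together with (i) the inductive extension $p_V \dd\pi(X) = \beta(X) p_V$ on $\cH^\infty$ for $X \in U(\fq)$ (derived from Remark~\ref{rem:2.13} by extending $\beta$ multiplicatively), and (ii) the invariance of $V$ under $\dd\pi(U(\oline\fq))$ with explicit formula $\dd\pi(\oline y)|_V = -\beta(y)^*$ for $y \in \fq$, reduces the claim to the two commutation relations on $B$; the $B^*$-relation supplies, via adjunction, the needed commutation with $\beta(\fq)^*$ coming from (ii). The hardest step will be controlling the cross-terms arising when a factor $\dd\pi(Y)$ with $Y \in U(\oline\fq)$ acts on a vector in $\cH^\infty$ having both $V$- and $V^\perp$-components; these are handled using the invariance $\dd\pi(\fq)(V^\perp \cap \cH^\infty) \subseteq V^\perp$ from Remark~\ref{rem:2.13}, which ensures that the spurious cross-terms contribute zero to $p_V \circ (\cdots)|_V$.
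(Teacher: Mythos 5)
Your proposal is correct in substance but reaches the surjectivity of $R$ by a genuinely different route from the paper. After the common first part (well-definedness, the inclusion $R(B_G(\cH_V)) \subeq B_{H,\fq}(V)$ via \eqref{eq:a3}, injectivity from $\cH_V = \oline{\Spann \pi(G)V}$), the paper uses that a von Neumann algebra is generated by its projections and that restriction images of von Neumann algebras are von Neumann algebras, and then hits each projection $P \in B_{H,\fq}(V)$ by a holomorphy argument: the kernel $\hat R(g_1,g_2) := P\, p_V\pi(g_1^{-1}g_2)p_V(\1-P)$ defines a $G$-invariant holomorphic section of the endomorphism bundle $\bL$ vanishing at the base point, hence vanishing identically by Lemma~\ref{lem:2.7}, which yields $P\pi(g)(\1-P)=0$ and the orthogonal splitting $\cH_V = \cH_{V_1}\oplus\cH_{V_2}$. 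You instead invoke the reduction theorem for the projection $p_V \in \cM := \pi(G)''$ with central support $\1$, which identifies $R(B_G(\cH_V))$ with the commutant of $p_V\cM p_V\res_V$ and reduces everything to the single identity $\la \pi(g)v, B^*w\ra = \la \pi(g)Bv,w\ra$, proved by real-analyticity of matrix coefficients (via $V \subeq \cH^\omega$) and a $U(\g_\C)$-computation. The paper's argument is shorter because Lemma~\ref{lem:2.7} is already in place and no enveloping-algebra bookkeeping is needed; yours bypasses the bundle $\bL$ entirely and obtains injectivity, surjectivity and the von Neumann property of the image in one stroke from standard reduction theory. Both ultimately rest on an identity-theorem argument (a holomorphic section vanishing on the totally real diagonal, versus a real-analytic function flat at $\1$), and both implicitly propagate from the identity component, i.e.\ tacitly use $G = G_0H$.

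One point in your final step is stated incorrectly, although the fix is immediate and stays within your own toolkit. You propose to kill the ``cross-terms'' $p_V\,\dd\pi(Y)u$ with $Y \in U(\oline\fq)$ and $u \in V^\bot\cap\cH^\infty$ by appealing to the invariance $\dd\pi(\fq)(V^\bot\cap\cH^\infty)\subeq V^\bot$ from Remark~\ref{rem:2.13}. That invariance concerns $\fq$, not $\oline\fq$; the operators $\dd\pi(\oline\fq)$ need not preserve $V^\bot\cap\cH^\infty$, so those cross-terms are not annihilated by this mechanism. The correct move is to choose the factorization in the order $U(\g_\C) = U(\fq)\cdot U(\oline\fq)$ (possible since $\fq + \oline\fq = \g_\C$ with both summands subalgebras), so that on $v \in V$ the $U(\oline\fq)$-factors act first and stay inside $V$ by \eqref{eq:a3}, after which $p_V\,\dd\pi(Z)$ with $Z \in U(\fq)$ becomes $\beta(Z)$ by iterating the identity $p_V\,\dd\pi(x) = \beta(x)p_V$ on $\cH^\infty$. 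With this ordering no cross-terms arise at all, and the commutation of $B$ with $\beta(\fq)$ and $\beta(\fq)^*$ finishes the computation exactly as you intend.
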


\begin{prf} By assumption, every $A \in B_G(\cH_V)$ preserves 
$V$, so  that 
$A\res_V$ can be identified with the operator $p_V A p_V  \in B(V)$, 
where $p_V \: \cH \to V$ is the orthogonal projection. 
Clearly $A\res_V$ commutes with each $\rho(h) = \pi(h)\res_V$. 
It also preserves the subspace $\cH^\infty$ on which it satisfies 
$\pi(x)A = A \pi(x)$ for every $x \in \g_\C$. Therefore 
$A\res_V$ commutes with $\dd\pi(\oline\fq)$ and hence with 
$\beta(\fq)$ (cf.~\eqref{eq:a3} in Remark~\ref{rem:2.13}). 

Therefore $R$ defines a homomorphism of von Neumann algebras. 
If $R(A) = 0$, then $A V = 0$ implies that 
$A \pi(G)V = \{0\}$, which leads to $A = 0$. 
Hence $R$ is injective. 

Since each von Neumann algebra is generated by orthogonal projections 
(\cite[Chap.~1, \S 1.2]{Dix96}) 
and images of von Neumann algebras under restriction maps 
are von Neumann algebras 
(\cite[Chap.~1, \S 2.1, Prop.~1]{Dix96}), 
we are done if we can show that every orthogonal 
projection in $B_{H,\fq}(V)$ is contained in the image of 
$R$. So let $P \in B_{H,\fq}(V)$. 
Then $V_1 := P(V)$ and $V_2 := (\1-P)(V)$  yields an 
$(H,\fq)$-invariant orthogonal decomposition $V = V_1 \oplus V_2$. 

Let $\hat Q \: G \times G \to B(V), (g_1, g_2) \mapsto 
p_V \pi(g_1^{-1}g_2) p_V$ be the natural kernel function 
defining the inclusion $\cH_V \into \Gamma(\bV)$ and consider the 
$G$-invariant kernel 
\[  \hat R(g_1, g_2) := P \hat Q(g_1, g_2) (\1 - P). \]
According to Example~\ref{ex:2.11}(b), 
it is contained in $C^\infty(G \times G, B(V))_{\tilde\rho,\tilde\beta}$, 
hence defines an element $R \in \Gamma(\bL)^G$. 
In view of 
\[  \hat R(\1,\1) = P Q(\1,\1) (\1 - P) = P(\1 - P) = 0, \]
this section vanishes in the base point, hence on all of 
$M \times \oline M$ (Lemma~\ref{lem:2.7}). We conclude that 
\[ 0 = \hat R(g_1, g_2) 
= P  p_V \pi(g_1^{-1}g_2) p_V (\1-P) 
= P \pi(g_1^{-1}g_2) (\1 - P), \] 
so that, for every $g \in G$, we have 
$P \pi(g)(\1-P) = 0.$ 
This leads to $P  \cH_{V_2} = \{0\}$, and hence to 
$\cH_{V_1} \bot \cH_{V_2}$. We derive that 
$\cH_V = \cH_{V_1} \oplus \cH_{V_2}$ is an orthogonal 
direct sum. Therefore the 
orthogonal projection $\tilde P \in B_G(\cH_V)$ onto $\cH_{V_1}$  
leaves $V$ invariant and satisfies 
$\tilde P\res_V = P$. This proves that $R$ is surjective. 
\end{prf}

\begin{rem} The preceding proof shows that, under the assumptions of 
Theorem~\ref{thm:5.5}, the range of the injective map 
\[ \ev \: \Gamma(\bL)^G \to B_H(V), \quad R \mapsto \hat R(\1,\1)\] 
contains $B_{H,\fq}(V)$. If $\beta(\fq) = \beta(\fh_\C)$, then 
$B_{H,\fq}(V) = B_H(V)$, so that we obtain a linear isomorphism 
$\Gamma(\bL)^G\cong B_H(V)$. 
\end{rem}

The preceding theorem has quite remarkable 
consequences because it implies that the representations 
$(\pi, \cH_V)$ and $(\rho,V)$ decompose in the same way. 

\begin{cor} \mlabel{cor:commutant} 
Suppose that $(\pi, \cH_V)$ is holomorphically induced from 
$(\rho,\beta)$, that $V$ is $B_G(V)$-invariant,  
and that $\beta(\fq) = \beta(\fh_\C)$, so that 
$B_H(V) = B_{H,\fq}(V)$. 
Then the $G$-representation $(\pi, \cH_V)$ 
has any of the following properties if and only if 
the $H$-representation $(\rho,V)$ does. 
\begin{description}
\item[\rm(i)] Irreducibility. 
\item[\rm(ii)] Multiplicity freeness. 
\item[\rm(iii)]  Type $I$, $II$ or $III$. 
\item[\rm(iv)] Discreteness, i.e., being a direct 
sum of irreducible representations. 
\end{description}
\end{cor}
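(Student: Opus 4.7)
The plan is to reduce everything to \textbf{Theorem~\ref{thm:5.5}}. Under the hypothesis $\beta(\fq) = \beta(\fh_\C)$, one has $B_{H,\fq}(V) = B_H(V)$, so Theorem~\ref{thm:5.5} supplies a $*$-isomorphism of von Neumann algebras
\[ R \: B_G(\cH_V) \to B_H(V), \quad A \mapsto A\res_V. \]
Each of (i)--(iv) is an intrinsic invariant of the commutant regarded as an abstract von Neumann algebra, and is therefore transferred by~$R$. Accordingly, I would dispose of the four items by citing, for each, the standard characterization in terms of the commutant.

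For (i), irreducibility of a unitary representation is equivalent to its commutant being $\C\cdot\1$, a property preserved by any unital $*$-isomorphism. For (ii), multiplicity-freeness is equivalent to the commutant being abelian. For (iii), the types I, II, III of a von Neumann algebra are defined through the projection lattice (existence, below every nonzero central projection, of an abelian projection, of a finite non-abelian projection, of no nonzero finite projection), and the standard equivalence ``a representation has type X iff its commutant has type X'' on a separable Hilbert space reduces the claim to the fact that $*$-isomorphisms preserve the type.

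For (iv) I would invoke the characterization that $\pi$ is a direct sum of irreducible subrepresentations if and only if $B_G(\cH_V)$ is atomic in the sense that $\1$ is the supremum (in the strong operator topology) of a family of pairwise orthogonal minimal projections. Indeed, by the double commutant theorem, the minimal projections in $B_G(\cH_V)$ are precisely the orthogonal projections onto irreducible $G$-invariant closed subspaces, and analogously for $B_H(V)$. Since $R$ is a normal $*$-isomorphism of von Neumann algebras, it sends minimal projections to minimal projections, preserves orthogonality, and commutes with strong suprema; hence atomicity transfers in both directions, giving the equivalence of discreteness of $\pi$ and $\rho$.

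The only potentially delicate step is (iv), because irreducible decompositions may involve infinitely many summands and one must know that $R$ is ultraweakly continuous; this, however, is automatic for any $*$-isomorphism between von Neumann algebras, so no real obstacle arises. The remaining items (i)--(iii) are immediate translations of the isomorphism given by Theorem~\ref{thm:5.5}.
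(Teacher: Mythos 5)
Your proposal is correct and follows essentially the same route as the paper: both reduce all four items to the commutant isomorphism $B_G(\cH_V)\cong B_H(V)$ from Theorem~\ref{thm:5.5} and then invoke the standard commutant characterizations of irreducibility, multiplicity freeness, type, and discreteness. The only cosmetic difference is in (iv), where you phrase discreteness via atomicity of the commutant (suprema of minimal projections) while the paper phrases it as the commutant being an $\ell^\infty$-direct sum of type~I factors; these are equivalent and both transfer under a normal $*$-isomorphism.
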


\begin{prf} (i) follows from the fact that, according to Schur's Lemma, 
irreducibility means that the commutant equals $\C \1$. 
  
(ii) is clear because multiplicity freeness means that the commutant 
is commutative. 

(iii) is clear because the type of a representation is defined as 
the type of its commutant as a von Neumann algebra. 

(iv) That a unitary representation decomposes discretely 
means that its commutant is an $\ell^\infty$-direct sum of 
factors of type $I$. Hence the $G$ representation on $\cH_V$ has
this property if only if the $H$-representation on $V$~does.
\end{prf}

Corollary~\ref{cor:commutant}(i) is a version of 
S.~Kobayashi's Theorem in the Banach context 
(cf.\ \cite{Ko68}). 

\begin{prob} Theorem~\ref{thm:5.5} should 
also be useful to derive direct integral decompositions 
of the $G$-representation on $\cH_V$ from direct integral 
decompositions of the $H$-representation on $V$. 

Suppose that the bounded unitary representation 
$(\rho, V)$ of $H$ is of type I and holomorphically inducible. 
Then it is a direct integral of irreducible representations 
$(\rho_j, V_j)$. Are these irreducible representations of 
$H$ also inducible?   
\end{prob}

\begin{cor} \mlabel{cor:2.15} Suppose that the $G$-representations 
$(\pi_1, \cH_{V_1})$, resp., $(\pi_2, \cH_{V_2})$ 
are holomorphically induced from the $(H,\fq)$-representations 
$(\rho_1, \beta_1, V_1)$, resp., $(\rho_2, \beta_2, V_2)$. 
Then any unitary isomorphism 
$\gamma \: V_1 \to V_2$ of $(H,\fq)$-modules extends uniquely to a 
unitary equivalence $\tilde\gamma \: \cH_{V_1} \to \cH_{V_2}$. 
\end{cor}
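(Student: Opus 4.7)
The plan is to exhibit $\tilde\gamma$ by transporting the reproducing kernel of $\cH_{V_1}$ via $\gamma$ and then invoking the uniqueness statement in Lemma~\ref{lem:2.7}. The first step will be to use the intertwining properties of $\gamma$, namely $\gamma\rho_1(h) = \rho_2(h)\gamma$ for $h \in H$ and $\gamma\beta_1(x) = \beta_2(x)\gamma$ for $x \in \fq$, to see that the rule $[g,v] \mapsto [g,\gamma v]$ yields a well-defined $G$-equivariant map $\oline\gamma \: \bV_1 \to \bV_2$ of Banach bundles. Since $\gamma$ intertwines both $\rho_i$ and $\beta_i$, a function $f \: G \to V_1$ lies in $C^\infty(G,V_1)_{\rho_1,\beta_1}$ if and only if $\gamma \circ f \in C^\infty(G,V_2)_{\rho_2,\beta_2}$; by Theorem~\ref{thm:a.2} this means $\oline\gamma$ is holomorphic with holomorphic inverse, so it induces a $G$-equivariant isomorphism $\oline\gamma_* \: \Gamma(\bV_1) \to \Gamma(\bV_2)$.

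Next I would identify each $\cH_{V_i}$ with its image in $\Gamma(\bV_i)$ and show that $\oline\gamma_*$ restricts to a unitary isomorphism onto $\cH_{V_2}$. Writing $\hat Q_i \: G\times G \to B(V_i)$ for the natural kernel $\hat Q_i(g_1,g_2) = p_{V_i}\pi_i(g_1^{-1}g_2)p_{V_i}$ characterizing the inclusion $\cH_{V_i} \hookrightarrow \Gamma(\bV_i)$ (Remark~\ref{rem:2.13}(a)), the kernel
\[ \hat R(g_1,g_2) := \gamma^{-1}\,\hat Q_2(g_1,g_2)\,\gamma \]
belongs to $C^\infty(G\times G, B(V_1))_{\tilde\rho_1,\tilde\beta_1}$ by the intertwining, and satisfies $\hat R(\1,\1) = \id_{V_1}$. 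By Lemma~\ref{lem:2.7} together with the characterization of holomorphic induction in Definition~\ref{def:2.10}, this forces $\hat R = \hat Q_1$. Thus $\oline\gamma_*$ transports the reproducing kernel of $\cH_{V_1}$ exactly to that of $\cH_{V_2}$, and standard reproducing-kernel theory then yields that $\oline\gamma_*$ restricts to an isometric $G$-equivariant bijection $\tilde\gamma \: \cH_{V_1} \to \cH_{V_2}$ whose restriction to $V_1 = \ev_\1^*V_1$ is~$\gamma$.

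For uniqueness, any other unitary $G$-equivariant extension $\tilde\gamma'$ agrees with $\tilde\gamma$ on $V_1$, hence on each $\pi_1(g)V_1$ by equivariance; since $\cH_{V_1} = \oline{\Spann(\pi_1(G)V_1)}$ (Remark~\ref{rem:2.13}(b)), the two must coincide. The main obstacle is the middle step: rigorously matching the transported kernel $\hat R$ with $\hat Q_1$. This reduces to the holomorphic rigidity statement that a $G$-invariant holomorphic section of the endomorphism bundle vanishing at the base point vanishes identically, which is precisely the content of Lemma~\ref{lem:2.7}, so no new analytical input beyond the results already established in Section~\ref{sec:2} is needed.
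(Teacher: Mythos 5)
Your proposal is correct and follows essentially the same route as the paper: transport the structure via the bundle isomorphism $[g,v]\mapsto[g,\gamma v]$, compare reproducing kernels at the base point, and invoke the injectivity of $\ev\:\Gamma(\bL)^G\to B_H(V)$ (Lemma~\ref{lem:2.7}) together with the uniqueness in Definition~\ref{def:2.10} to conclude the kernels agree. The only cosmetic difference is that you pull back the kernel of $\cH_{V_2}$ rather than pushing forward that of $\cH_{V_1}$, and you spell out the uniqueness of the extension, which the paper leaves implicit.
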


\begin{prf} Since $\gamma$ is $(H,\fq)$-equivariant, we have a 
well-defined $G$-equivariant bijection 
\[ \tilde \gamma \: \Gamma(\bV_1) \cong 
C^\infty(G,V_1)_{\rho_1, \beta_1} \to 
C^\infty(G,V_2)_{\rho_2, \beta_2}, \quad 
f \mapsto \gamma \circ f \] 
obtained from a corresponding isomorphism 
$[(g,v)] \mapsto [(g,\gamma(v))]$ of holomorphic $G$-bundles. 
Therefore $\tilde\gamma(\cH_{V_1})$ is an invariant Hilbert space 
with continuous point evaluations. The corresponding 
$G$-invariant kernel $\hat Q \in C^\infty(G \times G, B(V_2))$ 
satisfies 
\[ \hat Q(\1,\1) = \gamma \circ \id_{V_1} \circ \gamma^* 
= \gamma \gamma^* = \id_{V_2}.\] 
This implies that $\tilde\gamma(\cH_{V_1}) 
= \cH_{V_2}$ and that the map 
$\tilde \gamma \: \cH_{V_1} \to \cH_{V_2}$ is unitary 
because both spaces have the same reproducing kernels 
(cf.\ Definition~\ref{def:2.10}). 

For $v \in V_1$, we further note that 
$(\tilde\gamma \ev_\1^*v)(\1) 
= \gamma \ev_\1 \ev_\1^*v = \gamma v$, so that 
$\tilde\gamma$ extends $\gamma$, when considered as a map 
on the subspace $V_1 \cong \ev_\1^*V_1 \subeq \cH_1$. 
\end{prf}

\subsection{Realizing unitary representations by holomorphic sections} 

We conclude this section with a result that helps to realize 
certain subrepresentations of unitary representations in 
spaces of holomorphic sections. We continue in the setting 
of Section~\ref{sec:1}, where $M = G/H$ is a Banach homogeneous 
space with a complex structure defined by the subalgebra 
$\fq \subeq \g_\C$. 

From the discussion in Remark~\ref{rem:2.13}, we know that 
every holomorphically induced representation 
satisfies the assumptions (A1/2) in the theorem below, 
which is our main tool to prove that a given unitary representation 
is holomorphically induced. 

\begin{thm}\mlabel{thm:a.3} 
Let $(\pi, \cH)$ be a continuous unitary representation of $G$ 
and $V \subeq \cH$ be a closed subspace satisfying the 
following conditions: 
\begin{description}
\item[\rm(A1)] $V$ is $H$-invariant and the representation 
$\rho$ of $H$ on $V$ is bounded. In particular,  
$\dd\pi \res_{\fh}\: \fh \to \gl(V)$ 
defines a continuous homomorphism 
of Banach--Lie algebras. 
\item[\rm(A2)] The exists a subspace $\cD_V \subeq V \cap \cH^\infty$ dense 
in $V$ which is invariant under $\dd\pi(\oline\fq)$, 
the operators $\dd\pi(\oline\fq)\res_{\cD_V}$ are bounded, 
 and the so obtained representation of 
$\oline\fq$ on $V$ defines a continuous morphism 
of Banach--Lie algebras 
$$\beta \: \fq \to \gl(V), \quad x \mapsto 
-(\dd\pi(\oline x)\res_V)^*.$$ 
\end{description}
Then the following assertions hold: 
\begin{description}
\item[\rm(i)] $\beta$ is an extension of $\rho$ defining on 
$\bV := G \times_H V$ the structure of a complex 
Hilbert bundle. 
\item[\rm(ii)] If $p_V \: \cH \to \cH$ denotes 
the orthogonal projection to~$V$, then 
$$ \Phi \: \cH \to C(G,V)_\rho, \quad \Phi(v)(g) := p_V(\pi(g)^{-1}v) $$ 
maps $\cH$ into $C^\infty(G,V)_{\rho,\beta} \cong \Gamma(\bV)$, 
and we thus obtain a $G$-equivariant unitary isomorphism 
of the closed subspace 
$\cH_V := \oline{\Spann \pi(G)V}$ 
with the representation holomorphically induced from $(\rho, \beta)$. 
\item[\rm(iii)] $V \subeq \cH^\omega$. 
\end{description}
\end{thm}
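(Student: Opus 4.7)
My plan hinges on establishing the operator identity
\begin{equation*}
 p_V\,\dd\pi(x)\,w \;=\; \beta(x)\,p_V w \qquad\text{for all } x \in \fq,\ w \in \cH^\infty,
\end{equation*}
which will bridge the hypotheses (A1)/(A2) and the bundle picture. To prove it, I would pair both sides against an arbitrary $u \in \cD_V$: the formal skew-relation $\dd\pi(x)^* = -\dd\pi(\oline{x})$ on $\cH^\infty$ (from unitarity, extended complex-linearly), the inclusion $\dd\pi(\oline{x})u \in V$ granted by (A2), and the very definition $\beta(x) = -(\dd\pi(\oline{x})\res_V)^*$ together reduce both sides to $-\langle p_V w,\dd\pi(\oline{x})u\rangle$; density of $\cD_V$ in $V$ closes the argument.

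For (i) I would verify the conditions of Definition~\ref{def:a.1}. The equality $\beta\res_\fh = \dd\rho$ follows from the boundedness (A1) and skew-Hermiticity of $\dd\rho(x)$ for $x \in \fh$; the bracket relation $\beta([x,y]) = [\beta(x),\beta(y)]$ is obtained by closing the identity $\dd\pi(\oline{[x,y]}) = [\dd\pi(\oline{x}),\dd\pi(\oline{y})]$ from $\cD_V$ to bounded operators in $B(V)$ and then taking adjoints; and $\Ad(H)$-equivariance of $\beta$ is inherited from $\dd\pi(\Ad(h)\oline{x}) = \pi(h)\dd\pi(\oline{x})\pi(h)^{-1}$ after restriction to $V$ and the identification $\pi(h)\res_V = \rho(h)$. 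Theorem~\ref{thm:a.2} then supplies the holomorphic Hilbert bundle structure on $\bV$.

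For (ii) I would first treat $v \in \cH^\infty$. Since the orbit map $g \mapsto \pi(g)^{-1}v$ is smooth into $\cH$, the function $\Phi(v) = p_V \pi(\cdot)^{-1}v$ is smooth into $V$; $H$-equivariance is immediate from $p_V\pi(h) = \pi(h)p_V$; and the key identity combined with the chain rule gives $(L_x\Phi(v))(g) = -p_V\dd\pi(x)\pi(g)^{-1}v = -\beta(x)\Phi(v)(g)$ for $x \in \fq$, so $\Phi(v) \in C^\infty(G,V)_{\rho,\beta} \cong \Gamma(\bV)$. Since $\pi(G)$ preserves $\cH^\infty$, the set $\cD := \Spann(\pi(G)\cD_V)$ is contained in $\cH^\infty$ and dense in $\cH_V$; for general $v \in \cH_V$ the uniform bound $\|\Phi(v_n)(g) - \Phi(v)(g)\| \le \|v_n - v\|$ together with the Weierstrass theorem for Banach-valued holomorphic maps lifts $\Phi(v)$ to a holomorphic section, while $\Phi$ vanishes on $\cH_V^\bot$ trivially. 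A direct computation shows that $\Phi$ is $G$-equivariant, that $\Phi\res_{\cH_V}$ is injective (its kernel lies in $\cH_V \cap (\pi(G)V)^\bot = \{0\}$), and that the reproducing kernel of $\Phi(\cH_V)$ equals $\hat Q(g_1,g_2) = p_V\pi(g_1^{-1}g_2)\res_V$; in particular $\hat Q(\1,\1) = \id_V$, identifying $\Phi(\cH_V)$ as the representation holomorphically induced from $(\rho,\beta)$.

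Part (iii) then follows from Lemma~\ref{lem:2.2}(b): a brief matching-of-inner-products computation shows that $\Phi\res_{\cH_V}$ carries $V$ onto $\ev_\1^* V$, which consists of analytic vectors for the action on $\Phi(\cH_V)$. The main obstacle I expect is the bookkeeping in step (i): the operators $\dd\pi(\oline{x})$ are only assumed bounded on the possibly non-$\Ad(H)$-invariant set $\cD_V$, so identities involving brackets and $\Ad$-conjugations must first be established on $\cD_V$ and then transported to the closures in $B(V)$ by density. Once this is handled, the rest of the argument follows the outline above.
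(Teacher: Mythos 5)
Your proposal is correct and follows essentially the same route as the paper: the central intertwining identity $p_V\circ\dd\pi(x)=\beta(x)\circ p_V$ on $\cH^\infty$ (proved by pairing against $\cD_V$), the verification that $\beta$ is an extension of $\rho$ feeding into Theorem~\ref{thm:a.2}, the computation $(L_x\Phi(v))(g)=-\beta(x)\Phi(v)(g)$, the kernel identification $\hat Q(g_1,g_2)=p_V\pi(g_1^{-1}g_2)p_V$, and the appeal to Lemma~\ref{lem:2.2}(b) for (iii) are all exactly the paper's steps. The only cosmetic difference is in passing from $\cH^\infty$ to all of $\cH_V$: you invoke the Weierstrass theorem for locally uniform limits of holomorphic sections, while the paper cites the closedness of $\Gamma(\bV)$ in $\Gamma_c(\bV)$ for the compact-open topology --- the same fact in different packaging.
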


\begin{prf} (i) For $x \in \fh$, we have 
$\beta(x) = -\dd\rho(x)^*\ = \dd\rho(x),$ 
and it is also easy to see that 
$\beta(\Ad(h)x) = \pi(h) \beta(x)\pi(h)^{-1}$ for 
$h \in H$ and $x \in \fq$. Therefore $\beta$ is an extension 
of $\rho$, and we can use 
Theorem~\ref{thm:a.2} to see that 
$\beta$ defines the structure of a holomorphic Hilbert 
bundle on $\bV$. 

(ii) Clearly, $\Phi(\cH^\infty) \subeq C^\infty(G,V)_\rho$. 
For $v \in \cD_V$, $w \in \cH^\infty$ and $x \in \fq$, we further 
derive from (A2) that 
\begin{align*}
\la p_V(\dd\pi(x)w), v \ra 
&= \la \dd\pi(x)w, v \ra = \la w, \dd\pi(-\oline x)v \ra 
= \la p_V(w), \dd\pi(-\oline x)v \ra 
= \la \beta(x)p_V(w),v \ra, 
\end{align*}
so that the density of $\cD_V$ in $V$ implies 
$$ p_V \circ \dd\pi(x) =  \beta(x) \circ p_V\res_{\cH^\infty} 
\quad \mbox{ for } 
\quad x \in \fq. $$
For $v \in \cH^\infty$ and $x \in \fq$, we now obtain 
\begin{equation}
  \label{eq:lx-rel}
\big(L_x \Phi(v)\big)(g) 
= -p_V(\dd\pi(x)\pi(g)^{-1}v) 
= -\beta(x)p_V(\pi(g)^{-1}v)
= -\beta(x)\Phi(v)(g). 
\end{equation}
This means that 
$\Phi(\cH^\infty) \subeq C^\infty(G,V)_{\rho,\beta}$. 
Writing $\Gamma_c(\bV)$ for the space of continuous sections of 
$\bV$, we also obtain a map 
$\tilde\Phi \: \cH \to \Gamma_c(\bV)$ which is continuous if 
$\Gamma_c(\bV)$ is endowed with the compact open topology. 
As $\Gamma(\bV)$ is closed in $\Gamma_c(\bV)$ with respect to this topology 
(\cite[Cor.~III.12]{Ne01}), 
$\tilde\Phi(\oline{\cH^\infty}) \subeq \Gamma(\bV)$, 
resp.,  $\Phi(\oline{\cH^\infty}) \subeq C^\infty(G,V)_{\rho,\beta}$. 

Clearly $\Phi(v) = 0$ is equivalent to 
$v \bot \pi(G)V$, so that $(\ker \Phi)^\bot = \cH_V$. 
Since (A2) implies that $V \subeq \oline{\cH^\infty}$, the same 
holds for $\Spann(\pi(G)V)$. This shows that 
\[ \Phi(\cH) = \Phi(\cH_V) 
= \Phi(\oline{\cH^\infty}) \subeq C^\infty(G,V)_{\rho,\beta}.\] 
The corresponding kernel $\hat Q \: G \times G \to B(V)$ is given by 
\[ \hat Q(g_1, g_2) = \ev_{g_1} \ev_{g_2}^* 
= p_V \pi(g_1)^{-1} (p_V \circ \pi(g_2)^{-1})^* 
= p_V \pi(g_1^{-1}g_2) p_V,\] 
so that we have in particular $\hat Q(\1,\1) = \id_V$. 

(iii) To see that $V$ consists of analytic vectors, 
we may w.l.o.g.\ assume that $\cH = \cH_V$ and hence that 
$\cH \subeq \Gamma(\bV)$ is holomorphically induced and that 
$\Phi = \id_\cH$. 
Let $\ev_{\1 H} \: \cH \to \bV_{\1 H} \cong V$ be the evaluation map. 
The corresponding map 
\[ \ev_\1 \: C^\infty(G,V)_{\rho,\beta} \to V \] 
is simply given by evaluation in $\1 \in G$. 
Now $\ev_\1\res_V \: V \to V$ is the identity, so that 
$\ev_\1^* \: V\to \cH$ is simply the isometric inclusion. Hence 
the analyticity of $v = \ev_\1^*v = \ev_{\1 H}^*v$ 
follows from Lemma~\ref{lem:2.2}. 
\end{prf} 

\begin{rem} \mlabel{rem:2.14} 
Suppose that there exist subalgebras $\fp^\pm \subeq \g_\C$ with 
$\fq = \fh_\C \rtimes \fp^+$ and $\Ad(H)\fp^\pm = \fp^\pm$. 
Then, for every unitary representation 
$(\pi, \cH)$ of $G$, the closed subspace 
$V := \oline{(\cH^\infty)^{\fp^-}}$ 
is invariant under $H$ and $B_G(\cH)$. 
If the representation $(\rho, H)$ on $V$ is bounded, 
then the dense subspace $\cD_V := (\cH^\infty)^{\fp^-}$ 
satisfies (A2) if we put $\beta(\fp^+) = \{0\}$. 
Theorem~\ref{thm:a.3} now implies that 
$V \subeq \cH^\omega$, so that we see in particular that 
$V = (\cH^\infty)^{\fp^-}$ 
is a closed subspace of $\cH$. 
Moreover, Corollary~\ref{cor:commutant} applies. 
\end{rem}

The following remark sheds some extra light on condition (A2). 

\begin{rem}
Let $\cH \subeq \Gamma(\bV)$ be a $G$-invariant Hilbert subspace 
with continuous point evaluations, $m \in M$ and 
$V := \oline{\im(\ev_{m}^*)} \subeq \cH$. 
Then $V$ is a $G_m$-invariant closed subspace of $\cH$ and 
\[ V^\bot = \im(\ev_m^*)^\bot = \ker(\ev_m) 
= \{ s \in \cH \: s(m) =0\}. \] 

The action of $G$ on $\bV$ by holomorphic bundle automorphisms 
leads to a homomorphism $\dot\sigma_{\bV} \: \g_\C \to \cV(\bV)$ 
and each $x \in \fq_m$ thus leads to a linear vector field 
$-\beta_m(x)$ on the fiber $\bV_m$. Passing to derivatives in the formula 
$(g.s)(m) := g.s(g^{-1}m)$, we obtain for $x \in \fq_m$ 
\[ (x.s)(m) = -\beta_m(x)\cdot s(m).\] 
In particular, $V^\bot \cap \cH^\infty$ is invariant under 
the derived action of $\fq_m$, so that one can expect that the adjoint 
operators coming from $\oline{\fq_m}$ act on $\bV_m$

As we have seen in Lemma~\ref{lem:2.2}(b), the 
subspace $\im(\ev_m^*)$ of $\cH$ consists of smooth vectors, so that 
$V \cap \cH^\infty$ is dense in $V$. 
\end{rem}

\begin{ex} \mlabel{ex:grass} 
We consider the identical representation of 
$G = \U(\cH)$ on the complex Hilbert space $\cH$. 
Let $\cK$ be a closed subspace of $\cH$. Then 
the subgroup $Q := \{ g \in \GL(\cH) \: g\cK = \cK\}$ 
is a complex Lie subgroup of $\GL(\cH)$ and the Gra\ss{}mannian 
$\Gr_\cK(\cH) := \GL(\cH)\cK \cong \GL(\cH)/Q$ carries the 
structure of a complex homogeneous space on which the unitary 
group $G = \U(\cH)$ acts transitively and which is isomorphic 
to $G/H$ for $H := \U(\cH)_\cK \cong \U(\cK) \oplus \U(\cK^\bot)$. 

Writing elements of $B(\cH)$ as $(2 \times 2)$-matrices according to 
the decomposition $\cH = \cK \oplus \cK^\bot$, we have 
\[ \fq = \Big\{ \pmat{ a & b \\ 0 & d} \: a \in B(\cK), b \in B(\cK^\bot, \cK), 
d \in B(\cK^\bot)\Big\},\] 
and $\gl(\cH) = \fq \oplus \fp^-$ holds for 
$\fp^- = \Big\{ \pmat{ 0 & 0 \\ c & 0} \: c \in B(\cK, \cK^\bot)\Big\}.$ 

The representation of $\U(\cH)$ on $\cH$ is bounded with 
$V := \cH^{\fp^-} = \cK^\bot$, and the representation of 
$H \cong \U(\cK) \oplus \U(\cK^\bot)$ on this space is bounded. 
In view of Theorem~\ref{thm:a.3}, the 
canonical extension $\beta \: \fq \to \gl(V), \beta(x) := (x^*\res_V)^*$ 
now leads to a holomorphic vector bundle 
$\bV := \GL(\cH) \times_Q V \cong G \times_H V$ and a $G$-equivariant 
realization $\cH \into \Gamma(\bV)$. 

In this sense every Hilbert space can be realized as a space of 
holomorphic sections of a holomorphic vector bundle over any Gra\ss{}mannian 
associated to~$\cH$. Note that $\U(\cH)_\cK = \U(\cH)_V$ shows that 
$\Gr_\cK(\cH) \cong G/H$ can be identified in a natural way with
$\Gr_V(\cH)$. 
\end{ex} 

\begin{rem} Let $(\pi, \cH)$ be a smooth unitary representation 
of the Lie group $G$ and $V \subeq \cH$ be a closed $H$-invariant subspace. 
We then obtain a natural $G$-equivariant map 
$\eta \: G/H \to \Gr_V(\cH), gH \mapsto \pi(g)V$. 
If this map is holomorphic, then we can pull back the natural 
bundle $\bV \to \Gr_V(\cH)$ from Example~\ref{ex:grass} and 
obtain a realization of $\cH$ in $\Gamma(\eta^*\bV)$. This works 
very well if the representation 
$(\pi, \cH)$ is bounded because in this case $\pi \: G \to \U(\cH)$
is a morphism of Banach--Lie groups, but if 
$\pi$ is unbounded, then it seems difficult to verify 
that $\eta$ is smooth, resp., holomorphic. 

If (A1/2) in Theorem~\ref{thm:a.3} are satisfied, then 
$V \subeq \cH^\omega \subeq \cH^\infty$ implies that, 
the operators $\dd\pi(x)$, $x \in \g$, are defined on $V$. 
Since they are closable, the graph of these restrictions is closed, 
which implies that the restrictions $\dd\pi(x)\res_V \: V \to \cH$ 
are continuous linear operators. We thus obtain a 
natural candidate for a tangent map 
\[ T_H(\eta) \: T_H(G/H) \cong \g/\fh \to 
T_V(\Gr_V(\cH)) \cong B(V,V^\bot), \quad 
x \mapsto (\1-p_V)\dd\pi(x) p_V.\] 

For the special case where $\dim V = 1$, we have 
$V= \C v_0$ for a smooth vector $v_0$, and since the projective orbit map 
$G \to \bP(\cH) \cong \Gr_V(\cH), g \mapsto [\pi(g)v_0]$ is smooth, 
the induced map $\eta \:  G/H \to \bP(\cH)$ is smooth as well.
This construction is the key idea 
behind the theory of coherent state representations 
(\cite{Od88}, \cite{Od92}, \cite{Li91}, \cite{Ne00}, \cite{Ne01}), 
where one uses a holomorphic map $\eta \: G/H \to \bP(\cH^*)$ of a 
complex homogeneous space $G/H$ to realize a unitary representation 
$(\pi, \cH)$ of $G$ in the space of holomorphic sections 
of the line bundle $\eta^*\bL$, where 
$\bL \to \bP(\cH^*)$ is the canonical bundle on the dual projective
space with $\Gamma(\bL) \cong \cH$. 
\end{rem}

\section{Realizing positive energy representations}  \mlabel{sec:3}

In this section we fix an element $d \in \g = \L(G)$ for which 
the one-parameter group $e^{\R \ad d} \subeq \Aut(\g)$ is bounded, 
i.e., preserves an equivalent norm. We call such elements 
{\it elliptic}. Then $H := Z_G(d)$ is a Lie subgroup and if 
$0$ is isolated in $\Spec(D)$, then $G/H$ carries a natural complex 
structure. The class of representations which one may expect 
to be realized by holomorphic sections of Hilbert bundles 
$\bV$ over $G/H$ is the class of {\it positive energy representations}, 
which is defined by the condition that the selfadjoint operator 
$-i\dd\pi(d)$ is bounded below. 

\subsection{The splitting condition} 

Let $d \in \g$ be an elliptic element. Then 
\[ H = Z_G(\exp \R d) = Z_G(d) 
= \{  g \in G \: \Ad(g)d = d \} \] 
is a closed subgroup of $G$, not necessarily connected, 
 with Lie algebra $\fh = \z_\g(d) = \ker(\ad d)$. 
Since $\g$ contains arbitrarily small $e^{\R \ad d}$-invariant 
$0$-neighborhoods $U$, there exists such an open $0$-neighborhood 
with $\exp_G(U) \cap H = \exp_G(U \cap \L(H)).$ 
Therefore $H$ is a Lie subgroup of $G$, i.e., a Banach--Lie group 
for which the inclusion $H \into G$ is a topological embedding. 

Our assumption implies that $\alpha_t := e^{t\ad d}$ defines an 
equicontinuous one-paramter group of automorphisms of the 
complex Banach--Lie algebra $\g_\C$. For 
$\delta > 0$, we consider the Arveson spectral subspace 
$\fp^+ := \g_\C([\delta,\infty[)$ 
(cf.\ Definition~\ref{def:arv}). 
Applying Proposition~\ref{prop:spec-add} to the Lie bracket 
$\g_\C \times \g_\C \to \g_\C$, we see that 
$\fp^+$ is a closed complex subalgebra. 
For $f \in L^1(\R)$, $\alpha(f) := \int_\R f(t) \alpha_t\, dt$ 
and $x \in \g_\C$, 
the relations $\oline{\alpha(f)x} = \alpha(\oline f)\oline x$ and 
$\hat{\oline f}(\xi) = \oline{\hat f(-\xi)}$ imply that 
$\fp^- := \oline{\fp^+} = \g_\C(]-\infty, -\delta])$. 
To make the following constructions work, we assume the 
{\it splitting condition:} 
\begin{equation}
  \label{eq:splitcond}
\g_\C = \fp^+ \oplus \h_\C \oplus \fp^-.\tag{SC}
\end{equation}
In view of Lemma~\ref{lem:a.17}, it is satisfied for some 
$\delta > 0$ if and only if $0$ is isolated in $\Spec(\ad d)$. 

Since $\Ad(H)$ commutes with $e^{\R \ad d}$, 
the closed subalgebras $\fp^\pm \subeq \g_\C$ are invariant 
under $\Ad(H)$ and $e^{\R \ad d}$. 
Now $\g \cap (\fp^+ \oplus \fp^-)$ is a closed complement for 
$\fh$ in $\g$, so that $M := G/H$ carries the structure of a 
Banach homogeneous space and 
$\fq := \fh_\C + \fp^+ \cong \fp^+ \rtimes \fh_\C$ 
defines a $G$-invariant complex manifold 
structure on $M$ (cf.\ Section~\ref{sec:1}). 

\begin{rem} (a) For bounded derivations of compact 
$L^*$-algebras similar splitting conditions have been used 
by Belti\c{t}\u{a} in \cite{Bel03} to obtain K\"ahler polarizations 
of coadjoint orbits. In \cite{Bel04} this is extended to 
bounded normal derivations of a complex Banach Lie algebra. 

(b) If $\g$ is a real Hilbert--Lie algebra, then one can use
 spectral measures to obtain natural 
complex structures on $G/H$ even if the splitting condition is 
not satisfied, i.e., $0$ need not be isolated in the spectrum 
of $\ad d$ (\cite[Prop.~5.4]{BRT07}). 
\end{rem}

\begin{ex} If $\alpha$ factors through an action of 
the circle group $\T = \R/2\pi\Z$, then the Peter--Weyl Theorem 
implies that the sum $\sum_{n \in \Z} \g_\C^n$ of the corresponding 
eigenspaces $\g_\C^n := \ker(\ad d - in \1)$ 
is dense in $\g_\C$ with $\fh_\C = \g_\C^0$. Since the operator 
$\ad d$ is bounded, only finitely many $\g_\C^n$ are non-zero, so that 
we actually have $\g_\C = \sum_{n \in \Z} \g_\C^n$. 
From $[\g_\C^n, \g_\C^m] \subeq \g_\C^{n+m}$ it follows 
that $\fp^\pm := {\sum_{\pm n > 0} \g_\C^n}$ are closed subalgebras 
for which $\g_\C = \fp^+  \oplus \fh_\C \oplus \fp^-$ is direct. 
In this case the splitting condition is always satisfied 
and $\Spec(D) \subeq i\Z$. 

If, conversely, $d \in \g$ is an element for which the complex 
linear extension of $\ad d$ to $\g_\C$ is diagonalizable 
with finitely many eigenvalues in $i\Z$, then $e^{\R \ad d} 
\subeq \Aut(\g_\C)$ is compact, hence preserves a compatible norm. 
An important special situation, where we have all this 
structure are hermitian Lie groups (cf.\ \cite{Ne11}). 
In this case we simply have $\fp^\pm = \g_\C^{\pm 1}$. 
\end{ex}

\subsection{Positive energy representations} 

\begin{lem} \mlabel{lem:c.1} 
Let $\gamma \: \R \to \U(\cH)$ be a strongly continuous 
unitary representation and $A = A^* = -i\gamma'(0)$ be its selfadjoint 
generator, so that $\gamma(t) = e^{itA}$ in terms of measurable functional 
calculus. Then the following assertions hold: 
\begin{description}
\item[\rm(i)]  For each $f \in L^1(\R)$, we have 
$\gamma(f) = \hat f(A),$
where $\hat f(x) := \int_\R e^{ixy} f(y)\, dy$ is the Fourier transform 
of $f$. 
\item[\rm(ii)]  Let $P \: \fB(\R) \to B(\cH)$ be the unique 
spectral measure with $A = P(\id_\R)$. 
Then, for every closed subset $E \subeq \R$, the 
range $P(E)\cH$ coincides with the Arveson spectral subspace 
$\cH(E)$. 
\end{description}
\end{lem}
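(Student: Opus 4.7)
\textbf{Proof plan for Lemma~\ref{lem:c.1}.}

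For part (i), the plan is to reduce the operator identity $\gamma(f)=\hat f(A)$ to a scalar identity via the spectral measure. For any $v,w\in\cH$, the complex measure
$\mu_{v,w}(E):=\langle P(E)v,w\rangle$ is a regular Borel measure with total variation at most $\|v\|\|w\|$, and the spectral theorem gives
$\langle\gamma(t)v,w\rangle=\langle e^{itA}v,w\rangle=\int_\R e^{itx}\,d\mu_{v,w}(x)$. I would then compute
\[
\langle\gamma(f)v,w\rangle
=\int_\R f(t)\langle\gamma(t)v,w\rangle\,dt
=\int_\R\!\!\int_\R f(t)e^{itx}\,d\mu_{v,w}(x)\,dt,
\]
and invoke Fubini, whose hypothesis is justified by the bound $\int\!\!\int |f(t)|\,d|\mu_{v,w}|(x)\,dt\le \|f\|_1\|v\|\|w\|<\infty$. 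Exchanging the integrals produces $\int_\R \hat f(x)\,d\mu_{v,w}(x)=\langle\hat f(A)v,w\rangle$, and since $v,w$ are arbitrary this yields $\gamma(f)=\hat f(A)$.

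For part (ii), write $\mu_v:=\mu_{v,v}$. The inclusion $P(E)\cH\subseteq\cH(E)$ is immediate from (i): if $v=P(E)v$ and $\hat f|_E=0$, then $\gamma(f)v=\hat f(A)P(E)v=\int_E\hat f\,dP\,v=0$, whence $v\in\cH(E)$ by the definition of the Arveson spectral subspace recalled in Appendix~\ref{app:1}. For the converse inclusion $\cH(E)\subseteq P(E)\cH$, my plan is to show $\mu_v(E^c)=0$ for every $v\in\cH(E)$, which forces $\|P(E^c)v\|^2=\mu_v(E^c)=0$ and hence $v=P(E)v$.

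To establish $\mu_v(E^c)=0$, I would fix an arbitrary point $x_0\in E^c$, choose an open neighborhood $U\subseteq E^c$ of $x_0$, and pick a bump function $g\in C_c^\infty(U)$ with $g(x_0)>0$. Since the Fourier transform is a bijection on the Schwartz class, there exists $f\in\cS(\R)\subseteq L^1(\R)$ with $\hat f=g$; then $\hat f|_E=0$, so $\gamma(f)v=0$, and part (i) gives $\hat f(A)v=0$. Consequently
\[
0=\|\hat f(A)v\|^2=\int_\R|\hat f(x)|^2\,d\mu_v(x)\ge c^2\mu_v(W),
\]
where $W\ni x_0$ is a neighborhood on which $|\hat f|\ge c>0$. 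Hence $\mu_v(W)=0$. Since every $x_0\in E^c$ has such a neighborhood and $E^c\subseteq\R$ is Lindel\"of, covering $E^c$ by countably many such $W$'s and using countable subadditivity gives $\mu_v(E^c)=0$, as required.

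The main obstacle I anticipate is the careful matching of Arveson's definition of $\cH(E)$ with the measure-theoretic statement: the forward inclusion is formal once (i) is available, but the converse genuinely uses the density of Fourier transforms of $L^1$-functions in a sufficiently rich space of test functions (namely $C_c^\infty$), which is what lets us ``localize'' the condition $\gamma(f)v=0$ to arbitrary small open subsets of $E^c$. Everything else is a routine application of the spectral theorem, Fubini, and the regularity of spectral measures.
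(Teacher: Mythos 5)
Your proposal is correct, and it is worth recording how it differs from the paper's treatment. For (i) the paper reduces to cyclic representations and realizes them as multiplication representations on $L^2(\R,\mu)$, where the identity $\gamma(f)\xi=\hat f\cdot\xi$ is read off pointwise; you instead work directly with the matrix-coefficient measures $\mu_{v,w}=\la P(\cdot)v,w\ra$ and a Fubini interchange. These are two packagings of the same spectral-calculus computation, and your total-variation bound $|\mu_{v,w}|(\R)\le\|v\|\,\|w\|$ does justify Fubini, so both are fine; yours avoids the cyclic decomposition at the cost of a slightly heavier measure-theoretic setup. The more substantial difference is in (ii): the paper simply cites Arveson (\cite[p.~226]{Ar74}), whereas you supply a complete proof. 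Your forward inclusion is the formal consequence of (i) (note that the characterization of $\cH(E)$ in Remark~\ref{rem:a.6} only requires $\gamma(f)v=0$ for $\supp(\hat f)\cap E=\eset$, and you prove the stronger statement for all $f$ with $\hat f\res_E=0$, which covers it), and your converse is a clean localization argument: bump functions $g\in C_c^\infty(U)$ with $U\subeq E^c$ are Fourier transforms of Schwartz functions, the definition of $\cH(E)$ forces $\int|\hat f|^2\,d\mu_v=0$, and a Lindel\"of covering gives $\mu_v(E^c)=0$, hence $P(E^c)v=0$. This buys self-containedness where the paper defers to a reference, and it makes explicit exactly which density property of $\{\hat f:f\in L^1\}$ (containing $C_c^\infty$) the equality $P(E)\cH=\cH(E)$ rests on.
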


\begin{prf} Since the unitary representation $(\gamma,\cH)$ is a direct sum 
of cyclic representation, it suffices to prove the assertions for 
cyclic representations. Every cyclic representation of 
$\R$ is equivalent to the representation on 
some space $\cH = L^2(\R,\mu)$, where $\mu$ is a Borel probability 
measure on $\R$ and $(\gamma(t)\xi)(x) = e^{itx}\xi(x)$ 
(see \cite[Thm.~VI.1.11]{Ne00}). 

(i) We have $(A\xi)(x) = x\xi(x)$, so that 
$(\hat f(A)\xi)(x) = \hat f(x)\xi(x)$. On the other hand, we have for 
$f \in L^1(\R)$ in the space $\cH = L^2(\R,\mu)$ the relation 
\[ (\gamma(f)\xi)(x) 
= \int_\R f(t) e^{itx}\xi(x)\, dt = \hat f(x)\xi(x). \] 

(ii) see \cite[p.~226]{Ar74}. 
\end{prf}

\begin{prop} \mlabel{prop:c.3} 
Let $(\pi, \cH)$ be a smooth unitary representation 
of the Banach--Lie group $G$, $d \in \g$ be elliptic, 
and $P \: \fB(\R) \to \cL(\cH)$ be the spectral measure 
of the unitary one-parameter group 
$\pi_d(t) := \pi(\exp_G td)$. 
Then the following assertions hold: 
\begin{description}
\item[\rm(i)] $\cH^\infty$ carries a Fr\'echet structure for which 
$\pi_d(t)_{t \in \R}$ defines a continuous equicontinuous action of 
$\R$ on $\cH^\infty$. In particular, $\cH^\infty$ is invariant under 
$\pi_d(f)$ for every $f \in L^1(\R)$. 
\item[\rm(ii)] For every closed subset $E \subeq \R$, we have 
$\cH^\infty(E) = (P(E)\cH) \cap \cH^\infty$ for the corresponding 
spectral subspace. 
\item[\rm(iii)]  For every open subset $E \subeq \R$, 
$(P(E) \cH) \cap \cH^\infty$ is dense in $P(E)\cH^\infty$. 
More precisely, there exists a sequence $(f_n)_{n \in \N}$ 
in $L^1(\R)$ for which $\pi_d(f_n) \to P(E)$ in the 
strong operator topology and $\supp(\hat f_n) \subeq E$, so that 
$\pi_d(f_n)v \in \cH^\infty \cap P(E)\cH^\infty$ for every $v \in \cH^\infty$. 
\item[\rm(iv)]  %\mlabel{prop:4.3} 
For closed subsets $E, F \subeq \R$, 
the Arveson spectral subspaces $\g_\C(F)$ satisfies 
\begin{equation}
  \label{eq:shift}
\dd\pi(\g_\C(F))\big(\cH^\infty \cap P(E)\cH\big) \subeq P(\oline{E+ F})\cH. 
\end{equation}
\end{description}
\end{prop}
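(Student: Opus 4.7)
The plan is to handle the four assertions in order, each building on the previous: (i) establishes the Fr\'echet--analytic framework, (ii) identifies the intrinsic Arveson spectral subspace $\cH^\infty(E)$ with the one cut out by the spectral measure $P$, (iii) is a concrete spectral-approximation, and (iv) is a consequence of the spectrum-addition principle from the appendix.

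For (i), the Fr\'echet structure on $\cH^\infty$ and the smoothness of the $G$-action on it are taken from \cite{Ne10a}; the new content is equicontinuity of the $\R$-action $\pi_d$ on $\cH^\infty$. Since $d$ is elliptic, there is a norm on $\g$ equivalent to the original one under which $\alpha_t := e^{t\ad d}$ acts isometrically. Combined with the intertwining identity $\pi_d(t)\dd\pi(x) = \dd\pi(\alpha_t(x))\pi_d(t)$ on $\cH^\infty$, this shows that the natural higher-derivative seminorms of $\pi_d(t)v$ are bounded in terms of those of $v$, uniformly in $t$. Once equicontinuity is established, $t \mapsto f(t)\pi_d(t)v$ is Bochner integrable in $\cH^\infty$ for every $f \in L^1(\R)$ and $v \in \cH^\infty$, which yields both invariance of $\cH^\infty$ under $\pi_d(f)$ and continuity of $f \mapsto \pi_d(f)$ into $\End(\cH^\infty)$. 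I expect this step to be the main technical obstacle, as it hinges on the precise form of the Fr\'echet topology and uses ellipticity of $d$ in an essential way.

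Part (ii) becomes almost immediate once (i) is available: the definition of $\cH^\infty(E)$ as an Arveson spectral subspace reduces to a condition on $v \in \cH^\infty$ viewed as an element of $\cH$, because by Lemma~\ref{lem:c.1}(i) the vector $\pi_d(f)v$ is the same in both spaces, namely $\hat f(A)v$. Hence $\cH^\infty(E) = \cH^\infty \cap \cH(E)$, and Lemma~\ref{lem:c.1}(ii) identifies the latter with $\cH^\infty \cap P(E)\cH$. For part (iii), pick smooth compactly supported functions $\phi_n$ on the open set $E$ with $0 \le \phi_n \le 1$ and $\phi_n \to \1_E$ pointwise, and let $f_n \in L^1(\R)$ be the Schwartz function with $\hat f_n = \phi_n$. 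By Lemma~\ref{lem:c.1}(i), $\pi_d(f_n) = \phi_n(A) \to P(E)$ strongly, via spectral calculus and bounded convergence. Since $\supp(\hat f_n) \subseteq E$, $\pi_d(f_n)$ maps $\cH$ into $P(E)\cH$; by (i) it also preserves $\cH^\infty$. Hence for $w \in \cH^\infty$, the vectors $\pi_d(f_n)w \in \cH^\infty \cap P(E)\cH$ converge to $P(E)w$ in $\cH$, giving the density.

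For (iv), I apply Proposition~\ref{prop:spec-add} to the continuous bilinear map
\[ \g_\C \times \cH^\infty \to \cH^\infty, \quad (x,v) \mapsto \dd\pi(x)v, \]
which intertwines the diagonal $\R$-action $\alpha \times \pi_d$ with $\pi_d$ by the same identity used in (i). The resulting inclusion $\sigma_\pi(\dd\pi(x)v) \subseteq \overline{\sigma_\alpha(x) + \sigma_\pi(v)}$ yields~\eqref{eq:shift} once (ii) is invoked to translate between $\cH^\infty \cap P(E)\cH$ and the Arveson spectral subspace $\cH^\infty(E)$.
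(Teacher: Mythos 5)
Your proposal is correct and follows essentially the same route as the paper: the invariant seminorms $p_n$ built from an $e^{\R\ad d}$-invariant norm on $\g$ together with the intertwining relation $\pi_d(t)\dd\pi(x)\pi_d(t)^{-1}=\dd\pi(e^{t\ad d}x)$ for (i), the identity $\cH^\infty(E)=\cH(E)\cap\cH^\infty$ plus Lemma~\ref{lem:c.1}(ii) for (ii), approximation by Schwartz functions with $\hat f_n$ supported in $E$ and tending to $\1_E$ for (iii), and Proposition~\ref{prop:spec-add} applied to the continuous equivariant bilinear map $(x,v)\mapsto\dd\pi(x)v$ for (iv). The only cosmetic difference is that the paper realizes $\pi_d(f)$ on $\cH^\infty$ via the projective limit of Banach completions from Definition~\ref{def:arv} rather than a direct Bochner integral, which amounts to the same thing.
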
  

\begin{prf} (i) We may w.l.o.g.\ assume that the norm on 
$\g$ is invariant under $e^{\R \ad d}$. 
On $\cH^\infty$ we consider the Fr\'echet topology 
defined by the seminorms 
$$ p_n(v) := \sup \{ \|\dd\pi(x_1)\cdots \dd\pi(x_n)v\| \: 
x_i \in \g, \|x_i\| \leq 1\} $$
with respect to which the action of $G$ on $\cH^\infty$ is smooth  
(cf.~\cite[Thm.~4.4]{Ne10a}). In particular, the bilinear map 
\begin{equation}
  \label{eq:applic}
\g_\C \times \cH^\infty \to \cH^\infty, \quad 
(x,v) \mapsto \dd\pi(x) v
\end{equation}
is continuous because it can be obtained as a 
restriction of the tangent map of the $G$-action. 
 
In view of the relation 
$\pi_d(t) \dd\pi(x) \pi_d(t)^{-1} = \dd\pi(e^{t \ad d}x)$ 
for $t \in \R, x \in \g,$ 
the isometry of $e^{t\ad d}$ on $\g$ implies that 
the seminorms $p_n$ on $\cH^\infty$ 
are invariant under $\pi_d(\R)$. Since the $\R$-action 
on $\cH^\infty$ defined by the operators $\pi_d(t)$ is smooth, 
hence in particular continuous, we obtain with Definition~\ref{def:arv} 
an algebra homomorphism 
$$ \pi_d \: (L^1(\R), *) \to \End(\cH^\infty), \quad 
f \mapsto \int_\R f(t)\pi_d(t)\, dt $$  
(cf.\ Definition~\ref{def:arv} below), and this implies (i). 

(ii) Since $\cH^\infty(E) = \cH(E) \cap \cH^\infty$ follows immediately 
from the definition of spectral subspaces (Remark~\ref{rem:a.6}), 
this assertion is a consequence of Lemma~\ref{lem:c.1}(ii). 

(iii) We write the open set $E$ as an increasing 
union of compact subsets $E_n$ 
%\[E_n := \Big\{ t \in E \Big| |t| \leq n, \mathrm{dist}(t,E^c) \geq 
%{\textstyle\frac{1}{n}}\Big\} \] 
and observe that $\bigcup_n P(E_n) \cH$ is dense in $P(E)\cH$.  
For every $n$, there exists a compactly supported function 
$h_n \in C^\infty_c(\mathbb R,\mathbb R)$ such that 
\[ \supp(h_n) \subseteq E, \quad 0 \leq h_n \leq 1, \quad 
\mbox{ and } \quad h_n\big|_{E_n} = 1.\] 
 Let $f_n \in \cS(\mathbb R)$ with 
$\hat f_n = h_n$. Then 
$\pi_d(f_n) = \hat f_n(-i\gamma'(0)) = h_n(-i\gamma'(0))$ 
(Lemma~\ref{lem:c.1}(i)) and consequently
\[ P(E_n)\cH \subseteq \pi_d(f_n)\cH \subseteq 
P(E)\cH. \]
Therefore the subspace 
$\pi_d(f_n)\cH^\infty$ of $\cH^\infty$ is contained 
in $P(E)\cH$. 
If $w = P(E)v$ for some $v \in \cH^\infty$
then 
\[ \pi_d(f_n)w  = \pi_d(f_n)P(E)v = \pi_d(f_n)v
\in \cH^\infty\] 
and 
\[ \|\pi_d(f_n)w-w\|^2 
=  \|h_n(-i\pi_d'(0))w -w\|^2 \leq \|P(E\backslash E_n)w\|^2 \to 0\] 
from which it follows that $\pi_d(f_n)w \to w$. 

(iv) This follows  from the continuity of 
\eqref{eq:applic}, Proposition~\ref{prop:spec-add} and (ii). 
\end{prf}

Results of a similar type as Proposition~\ref{prop:c.3}(iv)  
and the more universal Proposition~\ref{prop:spec-add} in the 
appendix are well known in the context of bounded operators 
(cf.\ \cite{FV70}, \cite{Ra85}, \cite[Prop.~1.1, Cor.~1.2]{Bel04}). 
Arveson also obtains variants for automorphism groups of operator algebras 
(\cite[Thm.~2.3]{Ar74}). 

\begin{rem} Combining Lemma~\ref{lem:c.1}(i) with 
Proposition~\ref{prop:c.3}(i), we derive that 
the subspace $\cH^\infty$ of $\cH$ is invariant under all operators 
$P(\hat f) = \hat f(-i\dd\pi(d))$ for $f \in L^1(\R)$. This implies 
in particular to the operators $P(h)$, $h \in \cS(\R)$, but not 
to the spectral projections $P(E)$.   If $E \subeq \R$ is open, 
Proposition~\ref{prop:c.3}(iii) provides a suitable approximate 
invariance. 
\end{rem}

The following proposition is of key importance for the following. 
It contains the main consequences of Arveson's spectral theory  
for the actions on $\g_\C$ and $\cH^\infty$. 

\begin{prop} \mlabel{prop:6.2} If $d \in \g$ is elliptic with 
$0$ isolated in $\Spec(\ad d)$, then for any 
smooth positive energy representation 
 $(\pi, \cH)$ of $G$,  the $H$-invariant subspace 
$V := \oline{(\cH^\infty)^{\fp^-}}$ satisfies 
$\cH = \oline{\Spann(\pi(G)V)}$. 
\end{prop}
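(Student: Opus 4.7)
The strategy is to consider $\cK := \bigl(\oline{\Spann(\pi(G)V)}\bigr)^\perp$ and prove $\cK = \{0\}$. Being $G$-invariant, $\cK$ is closed under the one-parameter group $\pi_d(t) = \pi(\exp td)$ and hence under the spectral measure $P$ of $-i\dd\pi(d)$ and under each operator $\pi_d(f)$, $f \in L^1(\R)$; it is also stable under $\dd\pi(\g_\C)$ on smooth vectors (by differentiating the real one-parameter subgroups). Moreover, the orthogonal projection $p_\cK$ commutes with $\pi(G)$ and therefore preserves the Fr\'echet space $\cH^\infty$ from Proposition~\ref{prop:c.3}(i); since $p_\cK(\cH^\infty)$ is dense in $\cK$, the restriction $\pi|_\cK$ is itself a smooth positive energy representation, and $c' := \inf\Spec(-i\dd\pi(d)|_\cK)$ is finite whenever $\cK \neq \{0\}$.

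Assume $\cK \neq \{0\}$ and set $E := \,]-\infty, c' + \delta/2[$, so that $\oline E = \,]-\infty, c' + \delta/2]$. Proposition~\ref{prop:c.3}(iii) supplies $f_n \in L^1(\R)$ with $\pi_d(f_n) \to P(E)$ strongly and $\pi_d(f_n)\cH^\infty \subseteq \cH^\infty \cap P(E)\cH$. For any $v \in \cK \cap \cH^\infty$, the vectors $u_n := \pi_d(f_n)v$ then lie in $\cK \cap \cH^\infty \cap P(\oline E)\cH$, using that $\cK$ is $\pi_d$-invariant.

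I now invoke the spectrum-shift formula of Proposition~\ref{prop:c.3}(iv) with the closed set $F := \,]-\infty, -\delta]$, for which $\fp^- = \g_\C(F)$: for every $x \in \fp^-$,
\[
\dd\pi(x)u_n \;\in\; P\bigl(\oline{\oline E + F}\bigr)\cH \;=\; P(\,]-\infty, c' - \delta/2])\cH.
\]
Combined with $\dd\pi(x)u_n \in \cK$, this places $\dd\pi(x)u_n$ in $P(\,]-\infty, c'-\delta/2])\cK \subseteq P(\,]-\infty, c'[)\cK = \{0\}$ by the definition of $c'$. Hence $u_n \in (\cH^\infty)^{\fp^-} \subseteq V$, and since also $u_n \in \cK$ while $V \perp \cK$, one concludes $u_n = 0$. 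Letting $n \to \infty$ gives $P(E)v = 0$ for each $v \in \cK \cap \cH^\infty$; by density this means $P(E)\cK = \{0\}$. This contradicts $c' = \inf \Spec(-i\dd\pi(d)|_\cK)$, which, because the spectrum is the support of $P|_\cK$, forces $P(E)\cK \neq \{0\}$.

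The main technical point I anticipate is ensuring that the subrepresentation $\pi|_\cK$ is amenable to the same spectral calculus as $\pi$: the smoothing operators $\pi_d(f_n)$ from Proposition~\ref{prop:c.3}(iii) must preserve $\cK$ (automatic from its $G$-invariance) while producing smooth vectors, and the projection $p_\cK$ must yield enough smooth vectors in $\cK$ through its commutation with $\pi(G)$. Once this transfer of structure is secured, the proof reduces to a single application of the spectrum-shift formula together with the elementary fact that the infimum of the spectrum of any non-zero invariant subrepresentation lies in that spectrum.
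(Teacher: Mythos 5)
Your proof is correct and follows essentially the same route as the paper: both arguments pass to the orthogonal complement $\cK$ of $\oline{\Spann(\pi(G)V)}$ and use Proposition~\ref{prop:c.3}(iii) to produce smooth vectors in the bottom spectral slice of width less than $\delta$, then Proposition~\ref{prop:c.3}(iv) to show these are annihilated by $\dd\pi(\fp^-)$, forcing them into $V \cap \cK = \{0\}$. The only cosmetic difference is that the paper packages this as a separate claim ($\cH \neq \{0\}$ implies $V \neq \{0\}$, applied to the subrepresentation on $\cK$) using the slice $P([s,s+\eps[)\cH$ with $\eps < \delta$, whereas you run the contradiction directly on $\cK$ with the half-line $]-\infty, c'+\delta/2[$.
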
 

\begin{prf} First we show that $V \not=\{0\}$ whenever 
$\cH\not=\{0\}$. Let 
\[ s := \inf(\Spec(-i\dd\pi(d))) > -\infty . \]
For some $\eps \in ]0,\delta[$, we consider the closed subspace 
\begin{equation}
  \label{eq:vdef}
W := P([s,s+ \eps[) \cH = P(]s-\eps, s+ \eps[) \cH, 
\end{equation}
where $P \: \fB(\R) \to B(\cH)$ is the spectral measure of~$\pi_d$. 
Then Proposition~\ref{prop:c.3} implies that  
$W^\infty :=  W\cap \cH^\infty$ is dense in $W$ 
and that 
\[  \dd\pi(\fp^-) W^\infty 
\subeq P(]-\infty, s+ \eps - \delta])\cH = \{0\},\]
which leads to $\{0\}\not= W \subeq V$. 

Applying the preceding argument to  the positive energy 
representation on the orthogonal complement of 
$\cH_V := \oline{\Spann \pi(G)V}$, the relation 
$V \cap \cH_V^\bot=\{0\}$ implies that 
$\cH_V^\bot=\{0\}$, and hence that $\cH = \cH_V$. 
\end{prf}

\begin{thm} \mlabel{thm:6.2} If $d \in \g$ is elliptic with 
$0$ isolated in $\Spec(\ad d)$ and $(\pi,\cH)$ 
is a smooth positive energy representation 
for which the $H$-representation $\rho(h) := \pi(h)\res_V$ 
on $V := \oline{(\cH^\infty)^{\fp^-}}$  
is bounded, then $(\pi, \cH)$ is holomorphically induced from 
the representation $(\rho,\beta)$ of $(H,\fq)$ on 
$V$ defined by $\beta(\fp^+) = \{0\}$. In particular, 
$V$ consists of analytic vectors. 
\end{thm}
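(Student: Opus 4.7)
The plan is to realize the statement as a direct application of the recognition criterion Theorem~\ref{thm:a.3}, followed by Proposition~\ref{prop:6.2}. This is essentially the content sketched in Remark~\ref{rem:2.14}: the canonical candidate for the dense subspace in condition~(A2) is $\cD_V := (\cH^\infty)^{\fp^-}$, with $V$ being its closure by construction, and one expects $\beta$ to be forced to be $\dd\rho$ on $\fh_\C$ and zero on $\fp^+$.

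First I would verify (A1). Since $\Ad(H)\fp^- = \fp^-$, the subspace $(\cH^\infty)^{\fp^-}$ is $H$-invariant, and hence so is its closure $V$. The boundedness of $\rho$ is a hypothesis. In particular, by Proposition~\ref{prop:contin} the derived representation $\dd\rho \: \fh \to B(V)$ is a continuous homomorphism of Banach--Lie algebras, extending complex linearly to $\fh_\C$.

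For (A2), the density of $\cD_V \subeq V \cap \cH^\infty$ in $V$ is by definition. Invariance under $\dd\pi(\oline\fq) = \dd\pi(\fp^-) + \dd\pi(\fh_\C)$ splits into two parts. On $\fp^-$ the action is zero on $\cD_V$, so both invariance and boundedness are automatic. For $y \in \fh$, $v \in \cD_V$ and $x \in \fp^-$, the relation $[x,y] \in \fp^-$ (from $\Ad(H)\fp^- = \fp^-$) gives $\dd\pi(x)\dd\pi(y)v = \dd\pi(y)\dd\pi(x)v + \dd\pi([x,y])v = 0$, so $\dd\pi(y)v \in \cD_V$; complex-linear extension handles $\fh_\C$. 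For boundedness on $\cD_V$, I would use that $\pi(\exp ty)$ preserves $V$ and its restriction coincides with $\rho(\exp ty)$, so differentiation shows that $\dd\pi(y)\res_{V \cap \cH^\infty}$ agrees with the restriction of the bounded operator $\dd\rho(y)$. The resulting map $\beta \: \fq \to \gl(V)$ then satisfies $\beta\res_{\fh_\C} = \dd\rho$ (using skew-adjointness of $\dd\rho$ on $\fh$) and $\beta(\fp^+) = 0$ (since $\oline{\fp^+} = \fp^-$ annihilates $\cD_V$), and it is continuous because $\fq = \fp^+ \oplus \fh_\C$ as a topological direct sum.

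With (A1) and (A2) in hand, Theorem~\ref{thm:a.3} produces a $G$-equivariant unitary isomorphism of $\cH_V := \oline{\Spann\pi(G)V}$ with the representation holomorphically induced from $(\rho,\beta)$, and asserts $V \subeq \cH^\omega$. Proposition~\ref{prop:6.2} then supplies $\cH = \cH_V$, completing the proof. There is no serious obstacle: the only non-formal step is the commutator computation showing $\dd\pi(\fh_\C)\cD_V \subeq \cD_V$, and this rests purely on the bracket relation $[\fh, \fp^-] \subeq \fp^-$ coming from the $\Ad(H)$-invariance of $\fp^-$.
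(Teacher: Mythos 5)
Your proposal is correct and follows exactly the paper's route: the paper's own proof consists of citing Proposition~\ref{prop:6.2} for $\cH=\cH_V$ and then invoking Remark~\ref{rem:2.14}, which is precisely the verification of (A1)/(A2) with $\cD_V=(\cH^\infty)^{\fp^-}$ and $\beta(\fp^+)=\{0\}$ that you carry out in detail before applying Theorem~\ref{thm:a.3}. Your commutator argument for $\dd\pi(\fh_\C)\cD_V\subeq\cD_V$ and the identification $\beta\res_{\fh_\C}=\dd\rho$ are exactly the details the paper leaves implicit in that remark.
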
 

\begin{prf} Since $\pi(G)V$ spans a dense subspace of $\cH$, i.e., 
$\cH = \cH_V$ (Proposition~\ref{prop:6.2}), 
the assertion follows from Remark~\ref{rem:2.14}. 
\end{prf}

\begin{rem}
Since $\cH_V \cong \cH_{V'}$ as $G$-representations 
if and only if $V \cong V'$ as $H$-representations 
(cf.\ Corollary~\ref{cor:2.15}), the description of 
all $G$-representations of  positive energy 
for which the $H$-representation $(\rho, V)$ is bounded  
is equivalent to the determination of all bounded $H$-representations 
$(\rho, V)$ for which $(\rho, \beta, V)$ is 
inducible if we put $\beta(\fp^+) = \{0\}$. 
\end{rem}

\begin{cor} \mlabel{cor:6.2} If 
$d \in \g$ is elliptic with $0$ isolated in $\Spec(\ad d)$, 
then every bounded representation of $G$  is holomorphically induced from 
the representation $(\rho,\beta)$ of $(H,\fq)$ on 
$V := \oline{(\cH^\infty)^{\fp^-}}$ defined by $\beta(\fp^+) = \{0\}$. 
\end{cor}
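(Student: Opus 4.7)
The plan is to derive Corollary~\ref{cor:6.2} as a direct specialization of Theorem~\ref{thm:6.2}; essentially all the work has been done, and what remains is to check that each hypothesis of Theorem~\ref{thm:6.2} is automatic when $(\pi,\cH)$ is bounded.

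First I would verify that a bounded unitary representation is smooth and of positive energy. By Proposition~\ref{prop:contin} (or by definition, since bounded means norm continuous), every bounded representation is a morphism of Banach--Lie groups, so $\dd\pi \: \g \to B(\cH)$ takes values in bounded operators and $\cH^\infty = \cH$. In particular $-i\dd\pi(d)$ is a bounded selfadjoint operator, hence is bounded below, so $(\pi,\cH)$ is a positive energy representation in the sense of the paper.

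Next I would check the boundedness of the $H$-representation on $V := \oline{(\cH^\infty)^{\fp^-}} = \oline{\cH^{\fp^-}}$. Since $V$ is a closed $H$-invariant subspace of $\cH$ (the $H$-invariance follows from $\Ad(H)\fp^- = \fp^-$, as used in Remark~\ref{rem:2.14}), the restriction $\rho(h) := \pi(h)\res_V$ is simply the restriction of the norm-continuous homomorphism $\pi\res_H \: H \to \U(\cH)$ to the invariant subspace $V$. A restriction of a norm continuous unitary representation to a closed invariant subspace is again norm continuous, hence bounded.

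With these two observations in hand, Theorem~\ref{thm:6.2} applies verbatim: $(\pi,\cH)$ is holomorphically induced from $(\rho,\beta)$ on $V$ with $\beta(\fp^+) = \{0\}$, and $V$ consists of analytic vectors. I do not anticipate any obstacle here; the only thing one might worry about is whether Proposition~\ref{prop:6.2} could produce $V = \{0\}$, but this is ruled out in the proof of that proposition (the spectral subspace $W$ in \eqref{eq:vdef} is nonzero whenever $\cH \neq 0$), so the induction datum is nontrivial. Thus Corollary~\ref{cor:6.2} is essentially a one-line consequence of Theorem~\ref{thm:6.2} once one observes that bounded $\Rightarrow$ smooth positive energy with bounded isotropy representation.
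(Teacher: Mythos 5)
Your proposal is correct and follows exactly the route the paper intends: the corollary is stated as an immediate consequence of Theorem~\ref{thm:6.2}, and the three verifications you supply (bounded $\Rightarrow$ smooth with $\cH^\infty=\cH$, hence $-i\dd\pi(d)$ bounded and so bounded below, and boundedness of the restriction of $\pi\res_H$ to the closed $H$-invariant subspace $V$) are precisely the omitted details. Nothing further is needed.
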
 

From Corollary~\ref{cor:commutant} we obtain in particular:

\begin{cor} A positive energy representation 
$(\pi, \cH)$ of $G$ for which the representation $(\rho,V)$ of $H$ 
is bounded is a direct sum of irreducible ones if and only 
if $(\rho, V)$ has this property. 
\end{cor}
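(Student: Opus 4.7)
The plan is to read this corollary as a direct specialization of Corollary~\ref{cor:commutant}(iv), which already asserts precisely the equivalence of discreteness for a holomorphically induced pair, provided two hypotheses are met: that $V$ is invariant under the commutant $B_G(\cH)$, and that $\beta(\fq) = \beta(\fh_\C)$. So the whole task is to check that the positive-energy setting supplies both conditions, after which no further work is needed.

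First I would invoke Theorem~\ref{thm:6.2}: since the $H$-representation $(\rho,V)$ on $V = \oline{(\cH^\infty)^{\fp^-}}$ is by assumption bounded, $(\pi,\cH)$ is holomorphically induced from the pair $(\rho,\beta)$ on $V$ with $\beta(\fp^+) = \{0\}$ and $\beta\res_{\fh_\C} = \dd\rho$. From the splitting $\fq = \fh_\C \oplus \fp^+$ and $\beta(\fp^+) = \{0\}$ I immediately get $\beta(\fq) = \beta(\fh_\C)$, which is one of the two hypotheses of Corollary~\ref{cor:commutant}. For the second, I would cite Remark~\ref{rem:2.14}: under the standing assumption that $\fp^\pm$ are closed $\Ad(H)$-invariant subalgebras with $\fq = \fh_\C \rtimes \fp^+$, the closed subspace $V = \oline{(\cH^\infty)^{\fp^-}}$ is automatically invariant under both $H$ and the commutant $B_G(\cH)$.

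With both hypotheses verified, Corollary~\ref{cor:commutant}(iv) applies: the $G$-representation $(\pi,\cH) = (\pi,\cH_V)$ is a direct sum of irreducibles if and only if the $H$-representation $(\rho,V)$ is. This gives both implications at once, and the corollary is proved.

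There is no real obstacle in this argument; the substance was done earlier. The only things one might want to double-check carefully are that the specific $\beta$ produced by Theorem~\ref{thm:6.2} is indeed the one used in the commutant theorem (it is, since Theorem~\ref{thm:6.2} feeds directly into the framework of Section~\ref{sec:2}), and that the $B_G(\cH)$-invariance of $V$ from Remark~\ref{rem:2.14} does not implicitly require $(\rho,V)$ to be bounded beyond what we already assume — which it does not, the invariance coming purely from the subalgebra structure $\fq = \fh_\C \rtimes \fp^+$.
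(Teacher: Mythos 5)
Your proposal is correct and follows exactly the route the paper intends: the paper derives this corollary directly from Corollary~\ref{cor:commutant}, with Theorem~\ref{thm:6.2} supplying the holomorphic induction and Remark~\ref{rem:2.14} supplying the $B_G(\cH)$-invariance of $V$ and the applicability of the commutant theorem with $\beta(\fq)=\beta(\fh_\C)$. You have merely made explicit the verifications the paper leaves implicit, and each of them checks out.
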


\begin{ex}  The complex Banach--Lie algebra 
$\g$ is called {\it weakly root graded} if there exists 
a finite reduced root system $\Delta$ such that $\g$ 
contains the corresponding 
finite dimensional semisimple Lie algebra $\g_\Delta$ and for some 
Cartan subalgebra $\fh \subeq \g_\Delta$, the Lie algebra $\g$ is a direct 
sum of finitely many $\ad \fh$-eigenspaces. 

Now suppose that $\g$ is a real Banach--Lie algebra for which 
$\g_\C$ is weakly root graded, that 
$\fh$ is invariant under conjugation and, for every 
$x \in \fh \cap i\g$, the derivation $\ad x$ has real spectrum.
Then the realization results for bounded unitary representations 
of $G$ which follows from 
\cite[Thm.~5.1]{MNS09}, applied to their holomorphic 
extensions $G_\C \to \GL(\cH)$, can be derived 
easily from Corollary~\ref{cor:6.2}. 
\end{ex}

The following theorem shows that, assuming that $(\pi, \cH)$ is semibounded 
with $d \in W_\pi$ permits us to get rid of the quite implicit assumption 
that the $H$-representation on $V$ is bounded. It is an important 
generalization of Corollary~\ref{cor:6.2} to semibounded representations. 

\begin{thm} \mlabel{thm:6.2b} Let $(\pi, \cH)$ be a semibounded 
unitary representation of the Banach--Lie group $G$ and 
$d \in W_\pi$ be an elliptic element for which $0$ 
is isolated in $\Spec(\ad d)$. 
We write $P \: \fB(\R) \to B(\cH)$ for the spectral measure of the 
unitary one-parameter group $\pi_d(t) := \pi(\exp(td)$. 
Then the following assertions hold: 
\begin{description}
\item[\rm(i)] The representation $\pi\res_H$ of $H$ is semibounded and, 
for each bounded measurable subset $B \subeq \R$, the 
$H$-representation on $P(B)\cH$ is bounded. 
\item[\rm(ii)] The representation $(\pi, \cH)$ is a direct sum of 
representations $(\pi_j, \cH_j)$ for which there exist $H$-invariant 
subspaces $\cD_j \subeq (\cH_j^\infty)^{\fp^-}$ for 
which the $H$-representation $\rho_j$ on $V_j := \oline{\cD_j}$ is bounded and 
$\Spann\big(\pi_j(G)V_j\big)$ is dense in $\cH_j$. 
Then the representations $(\pi_j, \cH_j)$ are holomorphically 
induced from $(\rho_j, \beta_j,V)$, where 
$\beta_j(\fp^+)= \{0\}$. 
\item[\rm(iii)] If $(\pi, \cH)$ is irreducible and 
$s := \inf\Spec(-i\dd\pi(d))$, then $P(\{s\})\cH 
= \oline{(\cH^\infty)^{\fp^-}}$ and 
$(\pi, \cH)$ is holomorphically induced 
from the bounded $H$-representation $\rho$ on this space, extended 
by $\beta(\fp^+) = \{0\}$. 
\end{description}
\end{thm}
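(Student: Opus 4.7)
My plan is to prove the three parts sequentially, using the Arveson spectral analysis of Appendix~\ref{app:1} and Proposition~\ref{prop:c.3} together with the recognition criterion Theorem~\ref{thm:a.3}, the commutant result Corollary~\ref{cor:commutant}, and the positive-energy construction of Proposition~\ref{prop:6.2}. A key observation throughout is that $\Ad(H)d = d$, so $H$ commutes with the one-parameter group $\pi_d$ and with every spectral projection $P(B)$, making every $P(B)\cH$ an $H$-invariant subspace. For (i), since $d \in W_\pi$ and $W_\pi \subeq \g$ is open, $\fh \cap W_\pi$ is an open neighborhood of $d$ in $\fh$ on which $s_{\pi\res_H}$ is bounded, so $\pi\res_H$ is semibounded with $d \in W_{\pi\res_H}$. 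For the boundedness statement, I fix $r>0$ and $C < \infty$ with $s_\pi(d+y) \leq C$ for $\|y\| < r$, and observe that $-i\dd\pi(d)\res_{P(B)\cH}$ is bounded with spectrum in $B$. For $x \in \fh$ with $\|x\| < r$, the operators $-i\dd\pi(d)$ and $-i\dd\pi(x)$ are commuting self-adjoint generators (their one-parameter groups commute since $[d,x]=0$), and the joint spectral calculus on $P(B)\cH$ combined with $s_\pi(d \pm x) \leq C$ yields an upper bound on $\sup\Spec(\pm i\dd\pi(x)\res_{P(B)\cH})$, hence a uniform norm bound on $\dd\pi(x)\res_{P(B)\cH}$ for $\|x\| < r$. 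Proposition~\ref{prop:contin} then gives boundedness of the $H$-representation on $P(B)\cH$.

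For (ii), I apply Zorn's lemma to the family of tuples $\{(V_j,\cH_j)\}$ consisting of pairwise orthogonal closed $G$-invariant subspaces $\cH_j \subeq \cH$ together with $H$-invariant subspaces $\cD_j \subeq (\cH_j^\infty)^{\fp^-}$ such that the $H$-representation on $V_j := \oline{\cD_j}$ is bounded and $\cH_j = \oline{\Spann(\pi(G)V_j)}$. For each such $V_j$, Theorem~\ref{thm:a.3} applies with $\beta_j(\fp^+) = \{0\}$: (A1) holds by construction, and (A2) holds with $\cD_{V_j} := \cD_j$ since $\oline\fq = \fp^- + \fh_\C$ acts on $\cD_j$ by zero on $\fp^-$ and through the bounded derived $H$-action on $\fh_\C$. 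Suppose a maximal family does not exhaust $\cH$; let $\cK$ be its orthogonal complement. The restriction $\pi\res_\cK$ is still semibounded with $d$ in its positivity cone, so $s_\cK := \inf\Spec(-i\dd\pi\res_\cK(d))$ is finite. Choosing $\eps \in (0,\delta)$, the space $V_{\rm new} := P^\cK([s_\cK, s_\cK + \eps))\cK$ is $H$-invariant with bounded $H$-representation by (i) applied to $\pi\res_\cK$, and by Proposition~\ref{prop:c.3}(iii,iv) the subspace $V_{\rm new} \cap \cK^\infty$ is dense in $V_{\rm new}$ and contained in $(\cK^\infty)^{\fp^-}$. This yields a new family member $(V_{\rm new}, \oline{\Spann(\pi(G)V_{\rm new})})$, contradicting maximality.

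For (iii), irreducibility combined with (ii) forces a single summand, so $\cH = \oline{\Spann(\pi(G)V)}$ with $V$ of the form $P([s, s+\eps))\cH$ for any $\eps \in (0, \delta)$. By Corollary~\ref{cor:commutant}, $(\rho, V)$ is $H$-irreducible; since $-i\dd\pi(d)$ commutes with $\rho(H)$ on $V$, Schur gives $-i\dd\pi(d)\res_V = c\cdot\1$, and $V \subeq P([s, s+\eps))\cH$ for all $\eps \in (0, \delta)$ forces $c = s$, whence $V = P(\{s\})\cH$. To identify this with $\oline{(\cH^\infty)^{\fp^-}}$, the inclusion $P(\{s\})\cH \subeq \oline{(\cH^\infty)^{\fp^-}}$ follows from the density of smooth vectors in the open-set spectral subspace $P([s, s+\eps))\cH = P(\{s\})\cH$ (Proposition~\ref{prop:c.3}(iii)) together with their $\fp^-$-nullity (Proposition~\ref{prop:c.3}(iv)). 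For the reverse inclusion, given $v \in (\cH^\infty)^{\fp^-}$, I decompose $v = v_V + v^\perp$ with $v_V \in V \subeq \cH^\infty$ (using $V \subeq \cH^\omega$ from Remark~\ref{rem:2.13}(b)); then $v^\perp \in V^\bot \cap (\cH^\infty)^{\fp^-}$. If $v^\perp \neq 0$, smoothings $\pi_d(f_n)v^\perp$ with $\supp \hat f_n$ a bounded subset of $(s,\infty)$ remain in $(\cH^\infty)^{\fp^-} \cap V^\bot$ (since $e^{t\ad d}\fp^- = \fp^-$), and any such nonzero smoothing generates a bounded-$H$-rep closed subspace $V' \subeq V^\bot$ from which Theorem~\ref{thm:a.3} induces a $G$-invariant subspace equal to $\cH$ by irreducibility. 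The uniqueness of holomorphic induction together with Corollary~\ref{cor:2.15} yields a $G$-equivariant unitary $\cH \to \cH$ extending an $(H,\fq)$-isomorphism $V \to V'$; this unitary lies in $B_G(\cH) = \C\cdot\1$ and maps $V$ into $V' \perp V$, forcing the scalar to vanish and contradicting unitarity.

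The most delicate step is the exhaustion in the irreducible case (iii): producing a hypothetical second ground state $V' \perp V$ with bounded $H$-representation requires spectrally localized smoothings that remain $\fp^-$-null (secured by the $\ad d$-invariance of $\fp^-$), and the uniqueness argument via Corollary~\ref{cor:2.15} combined with Schur's lemma is what ultimately forces $V = \oline{(\cH^\infty)^{\fp^-}}$. The exhaustion in (ii) is handled analogously via the spectral gap below $s_\cK + \delta$, but with the cleaner $G$-invariance of $\cK$ available from the start.
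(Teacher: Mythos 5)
Your parts (i) and (ii) are correct and follow essentially the paper's own route: in (i) your spectral-calculus estimate on $P(B)\cH$ is a computational rendering of the paper's observation that boundedness of $\dd\rho_B(d)$ gives $W_{\rho_B}+\R d=W_{\rho_B}$, hence $0\in W_{\rho_B}$; in (ii) the Zorn argument together with the ground-level spectral window $P^{\cK}([s_\cK,s_\cK+\eps[)\cK$ on the orthogonal complement of a maximal family is exactly the paper's proof. The first half of (iii), identifying $V=P([s,s+\eps[)\cH$ with $P(\{s\})\cH$ via Corollary~\ref{cor:commutant} and Schur's Lemma, is also sound and matches the paper.

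The gap is in the final step of (iii), the inclusion $(\cH^\infty)^{\fp^-}\subeq P(\{s\})\cH$. Having produced a nonzero $w\in(\cH^\infty)^{\fp^-}\cap V^\bot\cap P(]s,t[)\cH$ and the subspace $V':=\oline{\Spann\pi(H)w}$, you assert that ``the uniqueness of holomorphic induction together with Corollary~\ref{cor:2.15}'' yields a $G$-equivariant unitary of $\cH$ extending an $(H,\fq)$-isomorphism $V\to V'$. No such isomorphism exists, and neither cited result supplies one: Definition~\ref{def:2.10}(b) only says that a \emph{fixed} pair $(\rho,\beta)$ induces at most one $G$-representation, while Corollary~\ref{cor:2.15} goes in the opposite direction, from a \emph{given} fiber isomorphism to an equivalence of the induced representations. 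In fact $V$ and $V'$ cannot be isomorphic as $H$-modules: $\exp(\R d)\subeq H$ acts by the character $e^{ist}$ on $V=P(\{s\})\cH$ and, by Schur applied to $V'$ (which is $H$-irreducible by Corollary~\ref{cor:commutant}), by $e^{ic't}$ with $c'>s$ on $V'$. So your contradiction rests on a false premise. The eigenvalue mismatch is, however, precisely what yields a correct contradiction if used directly: since $V'\subeq\cH^\omega$ and $\cH=\oline{\Spann\pi(G)V'}\subeq\oline{U(\fp^+)U(\fh_\C)V'}$ by Poincar\'e--Birkhoff--Witt, Proposition~\ref{prop:spec-add} gives $\cH\subeq P([c',\infty[)\cH$, contradicting $P(\{s\})\cH\ne\{0\}$. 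The paper sidesteps the issue entirely: it sets $U_t:=\oline{P([s,t[)(\cH^\infty)^{\fp^-}}$, shows each $U_t$ holomorphically induces $\cH$ and is therefore $H$-irreducible, concludes $U_t=P(\{s\})\cH$ for every $t>s$, and then lets $t\to\infty$ to absorb all of $(\cH^\infty)^{\fp^-}$ into $P(\{s\})\cH$.
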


\begin{prf} (i) From  $s_{\pi\res_H} = s_\pi\res_{\fh}$ it follows that 
$\pi\res_H$ is semibounded with $d \in W_\pi \cap \fh \subeq W_{\pi\res_H}$. 
For every bounded measurable subset $B \subeq \R$, the relation  
$d \in \z(\fh)$ entails that $P(B)\cH$ is $H$-invariant. Let 
$\rho_B$ denote the corresponding representation of $H$. 
Then the boundedness of $\dd\rho_B(d)$ which commutes with 
$\dd\rho(\fh)$ implies that 
$W_{\rho_B} + \R d = W_{\rho_B}$, so that $d \in W_{\pi\res_H} 
\subeq W_{\rho_B}$ leads to $0 \in W_{\rho_B}$. 
This means that $\rho_B$ is bounded. 

(ii) We apply Zorn's Lemma to the ordered set of all 
pairwise orthogonal systems of closed $G$-invariant subspaces 
satisfying the required conditions. 
Therefore it suffices to 
show that if $\cH \not=\{0\}$, then there exists a non-zero 
$H$-invariant subspaces $\cD \subeq (\cH^\infty)^{\fp^-}$ for 
which the $H$-representation $\rho$ on $\oline{\cD}$ is bounded. 

For $s := \inf\Spec(-i\dd\pi(d))$ and $0 < \eps < \delta$, we 
may take the space $\cD := \break \cH^\infty \cap P([s,s+\eps[)\cH$ 
from the proof of Proposition~\ref{prop:6.2}. 
As we have seen there, it is annihilated 
by $\dd\pi(\fp^-)$ and the boundedness of the $H$-representation on its 
closure follows from~(i). 

That the representations $(\pi_j, \cH_j)$ are holomorphically induced
 from the bounded $H$-representations on $V_j$ follows from 
Theorem~\ref{thm:a.3}. 

(iii) For $t > s$, let 
\[ \cD_U 
:= (\cH^\infty)^{\fp^-} \cap P([s,t[)(\cH^\infty)^{\fp^-}
= (\cH^\infty)^{\fp^-} \cap P(]s-\delta, t[)(\cH^\infty)^{\fp^-}. \] 

{\bf Claim 1:} $\cD_U$ is dense in 
$U := \oline{P([s,t[)(\cH^\infty)^{\fp^-}}$. 

Let $v \in (\cH^\infty)^{\fp^-}$ and $w := P(]s-\delta,t[)v$. 
With Proposition~\ref{prop:c.3}(iii), we find a sequence $f_n \in L^1(\R)$ 
for which $\pi_d(f_n)$ converges strongly to $P(]s-\delta,t[)$ 
and $\supp(\hat f_n) \subeq ]s-\delta, t[$, so that $\pi_d(f_n)v  \to w$ 
and 
\[ \pi_d(f_n)v = P(\hat f_n)v 
= P(]s-\delta, t[)P(\hat f_n)v 
\in P(]s-\delta, t[)\cH^\infty.\] 
Since $\fp^-$ is invariant under $e^{\ad d}$, the closed subspace 
$(\cH^\infty)^{\fp^-}$ is invariant under $\pi_d(\R)$, 
so that $\pi_d(f_n)v\in (\cH^\infty)^{\fp^-}$. 
This proves Claim $1$. 

{\bf Claim 2:} $(\pi, \cH)$ is holomorphically induced from 
the bounded $H$-representation $(\rho, \beta, U)$, defined by 
$\beta(\fp^+) := \{0\}$. 

From (i) we know that the $H$-representation on $U$ is bounded 
and on the dense subspace $\cD_U$ we have $\dd\pi(\fp^-)\cD_U = \{0\}$. 
Therefore (A1/2) in Theorem~\ref{thm:a.3} are satisfied and this proves 
Claim~$2$. 

{\bf Claim 3:} $U = P(\{s\})\cH$ for every $t > s$. 

In view of Claim $2$, Corollary~\ref{cor:commutant} implies that 
the $H$-representation on $U$ is irreducible. Since $\pi_d$ commutes with 
$H$, it follows in particular that $\rho(\exp\R d) \subeq \T\1$. 
The definition of $s$ now shows that $U \subeq P(\{s\})\cH$. 

For $0 < \eps < \delta$ and $t < \delta + \eps$, the proof 
of Proposition~\ref{prop:6.2} implies that 
$\big(P([s,t[)\cH\big) \cap \cH^\infty$ is dense in 
$P([s,t[)\cH$ and contained in $(\cH^\infty)^{\fp^-}$, hence 
in $P([s,t[)(\cH^\infty)^{\fp^-}\subeq U$. 
We conclude in particular that $P(\{s\})\cH \subeq U$. 

{\bf Claim 4:} $U = \oline{(\cH^\infty)^{\fp^-}}$. 

From the definition of $U$ it is clear that 
$U \subeq \oline{(\cH^\infty)^{\fp^-}}$. 
To see that we actually have equality, we note that Claim $2$ 
shows that $P([s,t[)(\cH^\infty)^{\fp^-} \subeq P(\{s\})\cH = U$ 
holds for every $t > s$. 
As $P([s,n]) \to P([s,\infty[) = \id$ holds pointwise, we obtain 
$(\cH^\infty)^{\fp^-} \subeq U$. 

This completes the proof of (iii). 
\end{prf}

\begin{rem} For finite dimensional Lie groups the classification 
of irreducible 
semibounded unitary representations easily boils down to a situation 
where one can apply Theorem~\ref{thm:6.2b}. Here 
$d \in \g$ is a regular element whose centralizer 
$\fh = \ft$ is a compactly embedded Cartan subalgebra and 
the corresponding group $T = H$ is abelian and $V$ is one-dimensional 
(cf.\ \cite{Ne00}). In this case $0$ is trivially isolated in 
the finite set $\Spec(\ad d)$. 
\end{rem}

The following theorem provides a bridge between the seemingly 
weak positive energy condition and the much stronger semiboundedness 
condition. 

\begin{thm} \mlabel{thm:3.15} 
Let $d \in \g$ be elliptic with $0$ isolated in $\Spec(\ad d)$. 
Then a smooth unitary representation 
$(\pi, \cH)$ of $G$ for which the representation $\rho$ of 
$H$ on $\oline{(\cH^\infty)^{\fp^-}}$ is bounded 
satisfies the positive energy condition 
\begin{equation}
  \label{eq:posen}
\inf\Spec(-i\dd\pi(d)) > -\infty
\end{equation}
if and only if $\pi$ is semibounded with $d \in W_\pi$. 
\end{thm}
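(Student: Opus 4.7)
\medskip

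\noindent
\textbf{Plan.} The $(\Leftarrow)$ direction is immediate from the definitions: if $\pi$ is semibounded with $d \in W_\pi$, then $s_\pi(d) < \infty$, i.e., $\sup \Spec(i\dd\pi(d)) < \infty$, which is equivalent to $\inf \Spec(-i\dd\pi(d)) > -\infty$. So the content lies in the $(\Rightarrow)$ direction, and for this I would proceed as follows.

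Assume the positive energy condition and that $\rho$ is bounded on $V := \oline{(\cH^\infty)^{\fp^-}}$. By Theorem~\ref{thm:6.2}, $(\pi, \cH)$ is holomorphically induced from $(\rho, \beta)$ with $\beta(\fp^+) = \{0\}$; in particular $V \subeq \cH^\omega \subeq \cH^\infty$ and, using \eqref{eq:a3} with $\beta(\fp^+) = 0$, one has $\dd\pi(\fp^-) V = \{0\}$. Choose $C > 0$ with $\|\dd\rho(x_0)\| \leq C \|x_0\|$ for $x_0 \in \fh$. The subspace $V$ is $\pi_d$-invariant (since $d$ is central in $\fh$) and, on $V$, $-i\dd\pi(d)$ restricts to the bounded self-adjoint operator $-i\dd\rho(d)$, whose spectrum lies in some interval $[s, T]$ with $s := \inf \Spec(-i\dd\pi(d))$ and $T := \sup\Spec(-i\dd\rho(d)) < \infty$; hence $V \subeq P([s,T])\cH$ for the spectral measure $P$ of $\pi_d$. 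For $v \in V$ and $y \in \g$, decompose $y = y_- + y_0 + y_+$ along $\g_\C = \fp^- \oplus \fh_\C \oplus \fp^+$; since $\oline{y_+} = y_-$ and $y_0 \in \fh$, the identities $\dd\pi(y_-)v = 0$ and $\la v, \dd\pi(y_+)v\ra = -\la \dd\pi(y_-)v, v\ra = 0$ yield $\la i\dd\pi(y)v, v\ra = \la i\dd\rho(y_0)v, v\ra$, and therefore $|\la i\dd\pi(y)v, v\ra| \leq C\|y\|\|v\|^2$ for $v \in V$.

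The main step is to propagate this estimate from $V$ to all of $\cH^\infty$, so as to obtain a uniform form inequality of the shape
\[ -\,i\dd\pi(y) \ \geq\ -\,C'\|y\|\bigl(-i\dd\pi(d) + K'\bigr) \qquad \text{on } \cH^\infty \]
valid for $y$ in some fixed ball, which will then give $\inf \Spec(-i\dd\pi(d+y)) \geq (1 - C'\|y\|) s - C'\|y\| K' > -\infty$ uniformly in a neighborhood of $0$, hence $d \in W_\pi$ and $\pi$ is semibounded. By Proposition~\ref{prop:6.2}, $\cH = \oline{\Spann(\pi(G)V)}$, and since $\dd\pi(\fp^-)V = 0$ one has, after PBW reordering using $\g_\C = \fp^- \oplus \fh_\C \oplus \fp^+$, that $\cD := \dd\pi(U(\fp^+))V \subeq \cH^\infty$ is a dense subspace on which to test the inequality. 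Iterating the Arveson shift of Proposition~\ref{prop:c.3}(iv) with $M := \sup|\Spec(\ad d)|$ shows that $\dd\pi(\fp^+)^k V \subeq P([s + k\delta, T + kM])\cH$, so the matrix of $\dd\pi(y)$ in the spectral decomposition of $\pi_d$ has a band structure whose width is controlled by the bounded spectrum of $\ad d$.

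The hard part will be to turn this band structure and the $V$-level estimate into the uniform form inequality above. The idea is to write $y = y_0 + (y_+ + y_-)$ and handle the two summands separately: for the $\fh$-piece, one uses that $\pi(\exp t y_0)$ commutes with $\pi_d$, preserves each spectral subspace, and acts via the $\Ad(H)$-equivariance of $\fp^\pm$ and the bounded action $\dd\rho$ on $V$ to transport the bound on $V$ to each $\dd\pi(U(\fp^+))V$-level by Cauchy--Schwarz; for the $\fp^\pm$-piece, the Arveson shift gives $\|\dd\pi(y_\pm)v\|^2 \leq \|y_\pm\|^2 \cdot (\text{bound involving } \|(-i\dd\pi(d))^{1/2}v\|^2)$ on each level by interpolating the $\fp^+ \oplus \fp^-$-action against the spectral ladder of $\pi_d$. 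Combining the two and taking $\|y\|$ small produces the required form bound on $\cD$, and hence on $\cH^\infty$ by density in the Fr\'echet topology of Proposition~\ref{prop:c.3}(i). This last uniformity step -- controlling the constants across all spectral levels simultaneously -- is the technical crux of the argument.
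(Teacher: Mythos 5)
Your $(\Leftarrow)$ direction and your preliminaries (holomorphic induction via Theorem~\ref{thm:6.2}, $\dd\pi(\fp^-)V=\{0\}$, the PBW density of $U(\fp^+)V$) are fine, but the core of the argument is missing. What must be shown is not merely that $\inf\Spec(-i\dd\pi(d+y))>-\infty$ for each small $y$, but that $s_\pi$ is bounded on a whole neighborhood of $d$ in $\g$; your route to this is a uniform form inequality $-i\dd\pi(y)\geq -C'\|y\|(-i\dd\pi(d)+K')$ on $\cH^\infty$, and the two steps you invoke to get it are exactly the ones you leave unproven. In particular, the claimed estimate $\|\dd\pi(y_\pm)v\|^2\leq \|y_\pm\|^2\cdot(\text{bound involving }\|(-i\dd\pi(d))^{1/2}v\|^2)$ is an unjustified relative bound of an unbounded operator against $(-i\dd\pi(d))^{1/2}$; such bounds do hold for semibounded representations, but establishing them is essentially equivalent to what you are trying to prove, so the argument is circular at its crux. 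Likewise, ``transporting the bound to each $\dd\pi(U(\fp^+))V$-level by Cauchy--Schwarz'' with constants uniform over all levels is precisely the uniformity you admit you cannot control. As written, the proposal is a plan with the decisive step absent.

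The paper's proof avoids this difficulty entirely by never perturbing $d$ inside $\g$ directly. It introduces the open $\Ad(H)$-invariant cone $C=\{x\in\fh\: \Spec(\ad_{\fp^+}(-ix))\subeq\,]0,\infty[\,\}\subeq\fh$, and for each $x\in C$ runs the Arveson spectral-subspace argument for the one-parameter group $t\mapsto\pi(\exp tx)$ itself (not for $\pi_d$): $V$ lies in $\cH^\infty([-s_\rho(x),\infty[)$, this subspace is $\fp^+$-invariant by Proposition~\ref{prop:spec-add} because $\fp^+$ has strictly positive $\ad(-ix)$-spectrum, and PBW density then forces $\cH([-s_\rho(x),\infty[)=\cH$, i.e.\ the exact identity $s_\pi(x)=s_\rho(x)$ for all $x\in C$. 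Boundedness of $\rho$ makes the right-hand side locally bounded on $C$; finally the $\Ad$-invariance of $s_\pi$ together with the local diffeomorphism $F(x,y)=e^{\ad y}x$ at points of $C$ (invertibility of $\ad x$ on $\fp$) spreads this bound from the cone $C\subeq\fh$ to an open neighborhood of $d$ in $\g$. If you want to complete your proof, this change of viewpoint --- estimate $s_\pi$ exactly on a cone in $\fh$ and then saturate by $\Ad(G)$, rather than bounding $-i\dd\pi(d+y)$ level by level --- is the missing idea.
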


\begin{prf} If $\pi$ semibounded with $d \in W_\pi$, then 
we have in particular \eqref{eq:posen}. It remains to show the 
converse if all the assumptions of the theorem are satisfied. 
Recall that the splitting condition \eqref{eq:splitcond} 
is satisfied because $0$ is isolated in $\Spec(\ad d)$. 
We note that the representation $\ad_{\fp^+}$ of 
$\fh$ on $\fp^+$ is bounded with $\Spec(\ad_{\fp^+}(-id)) \subeq ]0, \infty[$. 
Therefore the invariant cone 
\begin{equation}
  \label{eq:cone} 
C := \{ x \in \fh \: 
\Spec(\ad_{\fp^+}(-ix)) \subeq ]0,\infty[ \} 
\end{equation}
is non-empty 
and open because it is the inverse image of the 
open convex cone 
\[ \{ X \in \Herm(\fp^+) \: \Spec(X) \subeq ]0, \infty[ \}\] 
(cf.\ \cite[Thm.~14.31]{Up85}) under the continuous linear map 
$\fh \to \gl(\fp^+), x \mapsto \ad_{\fp^+}(-ix)$. 

Let $x \in C$ and 
\[ s_\rho(x) := \sup(\Spec(i\dd\rho(x))
= -\inf(\Spec(-i\dd\rho(x)),\]  so that 
$V := \oline{(\cH^\infty)^{\fp^-}} \subeq \cH^\infty$ 
(Theorem~\ref{thm:6.2}) is contained in 
the spectral subspace $\cH^\infty([-s_\rho(x),\infty[)$ with respect 
to the one-parameter group $t \mapsto \pi(\exp tx)$. 
Since the map 
\[ \g_\C \times \cH^\infty \to \cH^\infty, \quad 
(x,v) \mapsto \dd\pi(x)v \] 
is continuous bilinear and $H$-equivariant, we see with 
Proposition~\ref{prop:spec-add} that 
the subspace $\cH^\infty([-s_\rho(x),\infty[)$ of $\cH^\infty$ 
is invariant under $\fp^+$. 

For every $v \in V \subeq \cH^\omega$, the Poincar\'e--Birkhoff--Witt 
Theorem shows that it 
contains the subspace 
\[ U(\g_\C)v = U(\fp^+)U(\fh_\C)U(\fp^-)v 
= U(\fp^+)U(\fh_\C)v \subeq U(\fp^+)V.\] 
From $V \subeq \cH^\omega$ and $\cH = \cH_V$ it follows that 
$U(\g_\C)V$ is dense in $\cH$, and hence that 
$\cH^\infty([-s_\rho(x),\infty[) \subeq \cH([-s_\rho(x),\infty[)$ 
is dense in $\cH$. We conclude that 
\begin{equation}
  \label{eq:spirho} 
s_{\pi}(x) = \sup(\Spec(i\dd\pi(x))) = s_\rho(x) 
\quad \mbox{ for } \quad x \in C.
\end{equation}

To see that $C \subeq W_\pi$, it now suffices to show that 
$\Ad(G)C$ has interior points. 
Let $\fp := (\fp^+ + \fp^-)\cap \g$ and note that this is a closed 
$H$-invariant complement of $\fh$ in $\g$. 
The map $F \: \fh\times \fp \to \g, F(x,y) := e^{\ad y}x$ 
is smooth and 
\[ \dd F(x,0)(v,w) = [w,x] +  v.\] 
Since the operators $\ad x$, $x \in \h$, 
preserve $\fp$, the operator 
$\dd F(x,0)$ is invertible if and only if 
$\ad x \: \fp \to \fp$ is invertible, and this is the case 
for any $x \in C$ because $\ad x\res_{\fp^\pm}$ are invertible 
operators. This proves that $C$ is contained in the interior 
of $F(C,\fp)$, and hence that $C \subeq W_{\pi}$. 
Therefore $\pi$ is semibounded with $d \in W_{\pi}$. 
\end{prf}

\appendix 

\section{Equicontinuous representations} \mlabel{app:1}

In this appendix we first explain how a continuous representation 
$\pi \: G \to \GL(V)$, i.e., a representation defining a continuous 
action of $G$ on $V$, of the locally compact group $G$ 
on the complete locally convex space $V$ can be integrated to a 
representation of the group algebra $L^1(G)$, provided 
$\pi(G)$ is equicontinuous. If $G$ is abelian, we use this to 
extend Arveson's concept of spectral subspaces to 
representations on complete locally convex spaces. 

\subsection{Equicontinuous groups} 

If $(\pi, V)$ is a continuous representation of a compact group on a 
locally convex space, then $\pi(G)$ is an equicontinuous subgroup of 
$\GL(V)$ (cf.\ Lemma~\ref{lem:1.3}) and if $V$ is finite dimensional, 
then each equicontinuous subgroup of $\GL(V)$ has compact closure. 
Therefore, in the context of locally convex spaces, equicontinuous 
groups are natural analogs of compact groups. 

\begin{proposition} \mlabel{prop:equicont} Let $V$ be a locally convex space. 
For a subgroup $G\subeq \GL(V)$, the following are equivalent: 
\begin{description}
\item[\rm(1)] $G$ is equicontinuous. 
\item[\rm(2)] For each $0$-neighborhood $U \subeq V$, the set 
$\bigcap_{g \in G} gU$ is a $0$-neighborhood. 
\item[\rm(3)] There exists a basis of $G$-invariant absolutely 
convex $0$-neighborhoods in~$V$. 
\item[\rm(4)] The topology on $V$ is defined by the set of $G$-invariant 
continuous seminorms. 
\item[\rm(5)] Each equicontinuous subset of the dual space $V'$ 
is contained in a \break $G$-invariant equicontinuous subset. 
\end{description}
\end{proposition}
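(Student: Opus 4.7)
The plan is to establish the cycle $(1)\Rightarrow(2)\Rightarrow(3)\Rightarrow(4)\Rightarrow(1)$ and then add the equivalence with~$(5)$ via polar duality. None of the individual implications is deep; the whole statement is essentially a translation between the four standard languages (equicontinuity, invariant neighborhoods, invariant seminorms, polar sets) in which one expresses that a group of operators leaves the topology of $V$ invariant.

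For $(1)\Leftrightarrow(2)$, I would use that equicontinuity of $G$ at $0$ means: for every $0$-neighborhood $U$ there is a $0$-neighborhood $W$ with $gW\subeq U$ for all $g\in G$, equivalently $W\subeq \bigcap_{g}g^{-1}U$. Since $G$ is a group, $\{g^{-1}:g\in G\}=G$, so this is exactly the condition that $\bigcap_{g\in G}gU$ has non-empty interior (hence is a $0$-neighborhood, being a union of translates of such a $W$).

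For $(2)\Rightarrow(3)$, given any $0$-neighborhood $U$, pick an absolutely convex $0$-neighborhood $U_0\subeq U$. The set $W:=\bigcap_{g\in G}gU_0$ is still absolutely convex (intersection of absolutely convex sets), is contained in $U$, is $G$-invariant by construction, and is a $0$-neighborhood by~(2). For $(3)\Rightarrow(4)$, the Minkowski functional of a $G$-invariant absolutely convex $0$-neighborhood is a $G$-invariant continuous seminorm, and taking the family of these functionals for a basis of such neighborhoods gives a defining family of seminorms. For $(4)\Rightarrow(1)$, any $0$-neighborhood contains a finite intersection $\{v:p_1(v)<\eps,\ldots,p_n(v)<\eps\}$ of sets defined by $G$-invariant continuous seminorms; this intersection is itself $G$-invariant and a $0$-neighborhood, so serves as the required $W$.

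For $(1)\Leftrightarrow(5)$ I would use polar duality with the $G$-action on $V'$ given by $(g\cdot\alpha)(v):=\alpha(g^{-1}v)$, for which the polar of a $G$-invariant set is $G$-invariant. Assume (1); a subset $M\subeq V'$ is equicontinuous iff it lies in the polar $U^\circ$ of some $0$-neighborhood $U$, and by (3) we may choose such a $U$ to be $G$-invariant, whence $U^\circ\supeq M$ is $G$-invariant and equicontinuous. Conversely, assuming (5), take any closed absolutely convex $0$-neighborhood $U$; its polar $U^\circ$ is equicontinuous, hence by (5) contained in a $G$-invariant equicontinuous set $M\subeq V'$. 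Then $M^\circ\subeq U^{\circ\circ}=U$ by the bipolar theorem, and $M^\circ$ is a $G$-invariant $0$-neighborhood, giving~(2). The only point that requires a little care is making sure the action-on-the-dual convention is the right one so that the polar operation really intertwines the $G$-actions on $V$ and~$V'$; this is the step I would double-check, but it presents no serious obstacle.
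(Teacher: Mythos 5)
Your proposal is correct and follows essentially the same route as the paper: the translation between equicontinuity, invariant neighborhoods, and invariant seminorms via Minkowski functionals, and the equivalence with (5) via polar sets and the bipolar theorem (the paper organizes these as pairwise biconditionals rather than a cycle, but the underlying arguments coincide). Your extra care in (5)$\Rightarrow$(2) — starting from a \emph{closed absolutely convex} $0$-neighborhood so that $U^{\circ\circ}=U$ — is a small improvement in precision over the paper's wording, and your worry about the dual-action convention is harmless since the polar of a $G$-invariant set is $G$-invariant under either convention.
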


\begin{proof} (1) $\Leftrightarrow$ (2): 
The equicontinuity of $G$ means that for each $0$-neighbor\-hood 
$U$ there exists a $0$-neighborhood $W$ in $V$ with 
$gW \subeq U$ for each $g \in G$, i.e., 
$W \subeq \bigcap_{g \in G} gU$. 

\nin (2) $\Leftrightarrow$ (3) follows from the local convexity of $V$. 

\nin (3) $\Leftrightarrow$ (4): For 
each $G$-invariant absolutely convex $0$-neighborhood, the 
corresponding gauge functional is a $G$-invariant continuous seminorm 
and vice versa (\cite[Thm.~1.35]{Ru73}). 

\nin (3) $\Leftrightarrow$ (5): First we note that a subset 
$K \subeq V'$ is equicontinuous if and only if its polar 
$$\hat K := \{ v \in V \: \sup |\la K, v \ra| \leq 1 \} $$
is a $0$-neighborhood and, conversely, for each $0$-neighborhood 
$U \subeq V$, its polar set 
$$\hat U := \{ \lambda \in V' \: \sup |\lambda(U)| \leq 1 \} $$
is equicontinuous. 
If (3) holds and $K \subeq V'$ is equicontinuous, then $\hat K$ 
is a $0$-neighborhood and (3) implies the existence of a $G$-invariant 
absolutely convex $0$-neighborhood $U$, contained in $\hat K$. 
Then $\hat U \supeq \hats K \supeq K$ and the $G$-invariance of $\hat U$ 
imply (5). If, conversely, (5) holds and $W \subeq V$ is a $0$-neighborhood, 
then $\hat W$ is equicontinuous and (5) implies the existence of a 
$G$-invariant equicontinuous subset $K \supeq \hat W$. Then 
$\hat K \subeq W$ is an absolutely convex $G$-invariant $0$-neighborhood 
in $V$. 
\end{proof}

\begin{remark} (a) If $V$ is a normed space, then a subgroup $G \subeq \GL(V)$ 
is equicontinuous if and only if it is bounded. In fact, if $B_r(0)$ is the 
open ball or radius $r$, then the relation 
$G B_r(0) \subeq B_s(0)$ is equivalent to $\|g\| \leq \frac{s}{r}$ 
for all $g \in G$. 
In view of the preceding proposition, this is equivalent to the 
existence of a $G$-invariant norm defining the topology. 

(b) If $G \subeq \GL(V)$ is equicontinuous, then each $G$-invariant 
continuous seminorm $p$ on $V$ defines a homomorphism 
$G \to \Isom(V_p)$, where $V_p$ is the completion of the quotient space 
$V/p^{-1}(0)$ with respect to the norm induced by $p$. We thus obtain an 
embedding 
$G \into \prod_{i \in I} \Isom(V_{p_i}),$
where $(p_i)_{i \in I}$ is a fundamental system of $G$-invariant continuous 
seminorms on $V$. 

(c) If $G \subeq \GL(V)$ is equicontinuous, then 
Proposition~\ref{prop:equicont}(5) implies that each $G$-orbit 
in $V'$ is equicontinuous. 
\end{remark}

\begin{lemma} \mlabel{lem:1.3} 
If $\alpha \: G \to \GL(V)$ defines a continuous action of 
the compact group $G$ on the locally convex space $V$, then 
$\alpha(G)$ is equicontinuous. 
\end{lemma}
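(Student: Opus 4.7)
The plan is to exploit the joint continuity of the action $\alpha \colon G \times V \to V$ at points of the form $(g,0)$, together with compactness of $G$, to produce a uniform $0$-neighborhood.

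First I would fix an arbitrary $0$-neighborhood $U \subseteq V$ and aim to verify condition (2) of Proposition \ref{prop:equicont}, namely that $\bigcap_{g \in G} \alpha(g)^{-1}(U) = \bigcap_{g \in G} \alpha(g^{-1})(U)$ contains a $0$-neighborhood. For this, I would use that $\alpha(g)0 = 0$ for every $g \in G$, since $\alpha(g)$ is linear. Joint continuity of $\alpha$ at each point $(g, 0)$ then yields, for each $g \in G$, an open neighborhood $N_g \subseteq G$ of $g$ and an open $0$-neighborhood $W_g \subseteq V$ with
\[ \alpha(N_g \times W_g) \subseteq U. \]

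Next, compactness of $G$ lets me extract a finite subcover $G = N_{g_1} \cup \cdots \cup N_{g_n}$, and I would set $W := W_{g_1} \cap \cdots \cap W_{g_n}$, which is again a $0$-neighborhood in $V$. For an arbitrary $g \in G$, pick $i$ with $g \in N_{g_i}$; then $W \subseteq W_{g_i}$ gives
\[ \alpha(g) W \subseteq \alpha(N_{g_i} \times W_{g_i}) \subseteq U, \]
so $\alpha(g)W \subseteq U$ for every $g \in G$. This is exactly the equicontinuity of $\alpha(G)$.

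I do not anticipate a serious obstacle: the only ingredients needed are linearity of each $\alpha(g)$ (to ensure $\alpha(g)0 = 0$), joint continuity of the action (given), and compactness of $G$ (given). The argument is essentially the standard compactness reduction from pointwise to uniform continuity in the second variable.
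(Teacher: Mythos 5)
Your argument is correct and is essentially the paper's own proof: the paper observes that $\sigma^{-1}(U)$ is an open neighborhood of the compact set $G\times\{0\}$ and invokes the tube lemma to produce a single $0$-neighborhood $W$ with $G\times W\subseteq\sigma^{-1}(U)$, which is exactly the finite-subcover construction you carry out by hand. Both routes then conclude via condition (2) of Proposition~\ref{prop:equicont}.
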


\begin{proof} We write $\sigma \: G \times V \to V, (g,v) \mapsto \alpha(g)v$ for the 
action of $G$ on $V$. Let $U \subeq V$ be a convex balanced $0$-neighborhood. 
Then 
$U_G := \bigcap_{g \in G} \alpha(g)(U)$
also is a zero neighborhood in $V$ because 
$\sigma^{-1}(U) \subeq G \times V$ is an open neighborhood of the 
compact subset $G \times \{0\}$. Hence there exists some open 
$0$-neighborhood $W \subeq V$ with $G \times W \subeq \sigma^{-1}(U)$, 
and this means that $W \subeq U_G$. Now we apply 
Proposition~\ref{prop:equicont}.
\end{proof}

\subsection{Arveson spectral theory on locally convex spaces} 

\begin{defn} A representation $(\pi, V)$ of the group 
$G$ on the locally convex space $V$ is called {\it equicontinuous} 
if $\pi(G) \subeq B(V)$ (the space of continuous linear operators on $V$) 
is an equicontinuous group of operators, 
which is equivalent to the existence 
of a family of $G$-invariant continuous seminorms 
defining the topology (cf.\ Proposition~\ref{prop:equicont}). 
\end{defn}

\begin{defn} \mlabel{def:arv} (a) Let $(\pi, G)$ be an equicontinuous 
strongly continuous action of the locally compact group $G$ 
on the complete complex locally convex space $V$. 

We write $\cP^G$ for the set of $\pi(G)$-invariant continuous seminorms 
on $V$. This set carries a natural order defined by 
$p\leq q$ if $p(v) \leq q(v)$ for every $v \in V$. 
Let $V_p$ be the completion of $V/p^{-1}(0)$ with respect to~$p$. 
For $p \leq q$ we then obtain a contractive map 
$\phi_{pq} \: V_q \to V_p$ of Banach spaces. 
Since $V$ is complete, the natural map 
\[ V \to \prolim V_p 
:= \Big\{ 
(v_p) \in \prod_{p \in \cP^G} V_p\: (\forall p,q \in \cP^G)\, p \leq q \Rarrow 
\phi_{pq}(v_q) = v_p\Big\} 
\subeq \prod_{p \in \cP^G} V_p \] 
is a topological isomorphism, where the right hand side carries 
the product topology. 

We now obtain on each $V_p$ a continuous isometric representation 
$(\pi_p, V_p)$ of $G$. Therefore each $f \in L^1(G)$ defines a bounded 
operator 
\[ \pi_p(f) := \int_G f(g)\pi_p(g)\, dg \in B(V_p) \] 
where $dg$ stands for a left Haar measure on $G$ 
(\cite[(40.26)]{HR70}). Since these operators 
satisfy for $p \leq q$ the compatibility relation 
$\phi_{pq} \circ \pi_q(f) = \pi_p(f)$, 
the product operator 
$(\pi_p(f))_{p \in\cP^G}$ on $\prod_{p \in \cP^G} V_p$ preserves the closed 
subspace $\prolim V_p \cong V$, 
so that we obtain a continuous linear operator $\pi(f) \in 
B(V)$. This leads to a representation 
\begin{equation}
  \label{eq:intrep}
\pi \: (L^1(G), *) \to B(V).
\end{equation}

(b) If $G$ is abelian and $\hat G := \Hom(G,\T)$ is its character group, 
then we have the Fourier transform 
\[ \cF \: L^1(G) \to C_0(\hat G), \quad 
\cF(f)(\chi) := \int_G \chi(g) f(g)\, dg.\] 
We define the {\it spectrum of $(\pi, V)$} by 
\[ \Spec(V) := \Spec_\pi(V) := \{ \chi \in \hat G \: 
(\forall f \in L^1(G))\, \pi(f) = 0 \Rarrow \hat f(\chi) = 0\},\]
i.e., the {\it hull} or {\it cospectrum} of the ideal 
$\ker \pi \trile L^1(G)$. 
Accordingly, we define the {\it spectrum of an element $v \in V$} 
by 
\[ \Spec(v) := \Spec_\pi(v) := \{ \chi \in \hat G \: 
(\forall f \in L^1(G))\, \pi(f)v = 0 \Rarrow \hat f(\chi) = 0\},\]
which is the hull of the annihilator ideal of $v$. 
For a subset $E \subeq \hat G$, we now define 
the corresponding {\it Arveson spectral subspace} 
\[ V(E) := V(E,\pi) := \{ v \in V\: \Spec(v) \subeq \oline E \}\] 
(cf.~\cite[Sect.~2]{Ar74}, where this space is denoted $M^\pi(\oline E)$; 
see also \cite[Ch.~XI]{Ta03}). 
\end{defn}

\begin{rem} \mlabel{rem:a.6}
In \cite[p.~225]{Ar74} it is shown that 
\[ V(E) = \{ v \in V \: (\forall f \in L^1(G))\ 
\supp(\hat f) \cap \oline E = \eset \Rarrow  \pi(f)v = 0\}, \] 
which implies in particular that $V(E)$ is a closed subspace which is 
clearly $\pi(G)$-invariant. 
Note that the condition $\supp(\hat f) \cap 
\oline E = \eset$ 
means that $\hat f$ vanishes on a neighborhood of~$\oline E$. 
\end{rem}

\begin{rem} \mlabel{rem:joe}
In \cite[Thm.~3.3]{Jo82} it is shown that for one-parameter groups 
of isometries of a Banach space $X$ with infinitesimal generator $A$, 
the Arveson spectrum of an 
element $v \in X$ coincides with the complement of the maximal open 
subset $G \subeq \R$ for which the map $z \mapsto (z - A)^{-1}v$ 
extends holomorphically to $(\C\setminus \R) \cup G$ 
(see also \cite[Ch.~XI]{Ta03}). 
\end{rem}

%\item Check \cite{Ta03} for the equality of Arveson spectrum and 
%spectrum of the infinitesimal generator 
%To simplify the arguments, we assune from now on that $G = \R^n$ 
%and that $(\pi, V)$ is a continuous equicontinuous representation 
%of $G$ on a complete locally convex space. 

\begin{rem} \mlabel{rem:intersect} 
If $(E_j)_{j \in J}$ is a family of closed subsets of $\hat G$, 
then 
\[ V(\bigcap_{j\in J} E_j) = \bigcap_{j \in J} V(E_j)\] 
follows immediately from the definition. 
\end{rem}

\begin{lem} \label{lem:a.10} {\rm(\cite[p.~226]{Ar74})} For 
$\chi \in \hat G$ we have
\[ V(\{\chi\}) = V_\chi(\pi) := 
\{ v \in V \: (\forall g\in G)\, \pi(g)v = \chi(g)v\}.\] 
\end{lem}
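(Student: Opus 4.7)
The inclusion $V_\chi(\pi) \subseteq V(\{\chi\})$ should be immediate: if $\pi(g)v = \chi(g)v$ for every $g \in G$, then on each Banach quotient $V_p$ from Definition~\ref{def:arv}(a) the integral formula for $\pi_p(f)v_p$ collapses to $\hat f(\chi)\,v_p$, hence $\pi(f)v = \hat f(\chi)v$, which vanishes whenever $\hat f$ does on a neighborhood of $\chi$.

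For the reverse inclusion, my plan is to invoke the classical fact that every singleton in $\hat G$ is a set of spectral synthesis for $L^1(G)$, i.e., the $L^1$-closure of $J(\chi) := \{h \in L^1(G) \: \hat h = 0 \text{ on a neighborhood of } \chi\}$ equals the ideal $I(\chi) := \{f \in L^1(G) \: \hat f(\chi) = 0\}$. Granting this and the contractivity of $\pi_p \: L^1(G) \to B(V_p)$ on each Banach quotient, I will first upgrade the defining property of $V(\{\chi\})$ to the sharper statement that $\pi(f)v = 0$ for every $v \in V(\{\chi\})$ and every $f \in I(\chi)$. This proceeds by approximating $f$ by a sequence $h_n \in J(\chi)$ in $L^1$-norm, noting $\pi(h_n)v = 0$ by definition, and passing to the limit on each $V_p$; the convergence then assembles to $V$ via its projective limit structure.

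Next I will fix $f_0 \in L^1(G)$ with $\hat f_0(\chi) = 1$ and put $v_0 := \pi(f_0)v$. For any $f \in L^1(G)$ the difference $f - \hat f(\chi)f_0$ lies in $I(\chi)$, so the previous step yields the crucial identity
\[ \pi(f)v = \hat f(\chi)\,v_0 \qquad \text{for all } f \in L^1(G). \]
Applying this with $f$ equal to the left translate $f_0^g(x) := f_0(g^{-1}x)$, whose Fourier transform at $\chi$ equals $\chi(g)$, and using $\pi(g)\pi(f_0) = \pi(f_0^g)$, I obtain $\pi(g)v_0 = \chi(g)v_0$ for every $g \in G$; thus $v_0 \in V_\chi(\pi)$, and the displayed formula places $\pi(f)v$ in $V_\chi(\pi)$ for every $f$.

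The proof will then close by picking an $L^1$-approximate identity $(e_\alpha)$: standard Banach-space theory for strongly continuous contractive representations on each $V_p$ gives $\pi_p(e_\alpha)v_p \to v_p$, hence $\pi(e_\alpha)v \to v$ in $V$. Since $V_\chi(\pi) = \bigcap_{g \in G}\ker\bigl(\pi(g) - \chi(g)\,\id_V\bigr)$ is closed (each $\pi(g)$ is continuous by equicontinuity of $\pi(G)$) and contains every $\pi(e_\alpha)v$, it contains the limit $v$. The main obstacle, and the only genuinely non-elementary ingredient, is the synthesis claim for singletons in $\hat G$; once it is on the table, the remainder is a routine transfer of Banach-space arguments to the locally convex setting via the projective limit construction of Definition~\ref{def:arv}(a).
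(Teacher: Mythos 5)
Your proof is correct. The paper itself offers no argument for this lemma --- it simply cites \cite[p.~226]{Ar74} --- and what you write is exactly the standard argument behind that citation: the easy inclusion via $\pi(f)v=\hat f(\chi)v$ for eigenvectors, and the converse via spectral synthesis for one-point sets, the resulting identity $\pi(f)v=\hat f(\chi)\,\pi(f_0)v$, translation of $f_0$ to see that $\pi(f_0)v$ is a $\chi$-eigenvector, and an approximate identity to recover $v$; your transfer of the Banach-space steps to the locally convex setting through the projective limit $V\cong\prolim V_p$ of Definition~\ref{def:arv}(a) is precisely how the paper's construction of $\pi(f)$ is set up to be used.
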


\begin{defn} Let $\g$ be a complete locally convex Lie algebra 
and $x \in \g$ be such that $\ad x$ generates a continuous 
equicontinuous one-parameter 
group $\pi \: \R \to \Aut(\g)$ of automorphisms, i.e., 
$\pi$ is strongly differentiable with $\pi'(0) = \ad x$. 
Then we obtain for each closed subset $E \subeq \R$ a spectral 
subspace $\g_\C(E)$. 
\end{defn}

\begin{defn} For a subset $E \subeq \hat G$,  we also define 
\[ R(E):= R^\pi(E):= \oline{\Spann\{ \pi(f)V \: f \in L^1(G), 
\hat f \in C_c(\hat G), 
\supp(\hat f) \subeq E\}}.\]  
\end{defn}

\begin{rem} \label{rem:a.13} 
(a) Clearly $E_1 \subeq E_2$ implies $R(E_1) \subeq R(E_2)$ 
and Remark~\ref{rem:a.6} implies that $R(E) \subeq V(E)$.
Here we use that, for $f,h \in L^1(G)$, the conditions 
$\supp(\hat f) \subeq E$ and 
$\supp(\hat h) \cap \oline E =\eset$ imply 
that $(h * f)\,\hat{} = \hat h \hat f =0$, and this leads to 
$h * f = 0$, which in turn shows that 
$\pi(h)\pi(f)v  = \pi(hf)v = 0$. 
\end{rem}

As we shall see below, the following lemma is an important 
technical tool. 

\begin{lem} \mlabel{lem:intersect} 
For every closed subset $E \subeq \hat G$, the space 
$V(E)$ is the intersection of the subspaces 
$R(E + N)$, where $N$ is an identity neighborhood in $\hat G$. 
\end{lem}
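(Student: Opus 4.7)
The plan is to prove the two inclusions separately, with the nontrivial direction resting on a cutoff argument inside the Fourier algebra $\cA(\hat G) = \cF(L^1(G))$.

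For the easy inclusion $\bigcap_N R(E+N) \subseteq V(E)$, I will combine Remark~\ref{rem:a.13} (giving $R(E+N) \subseteq V(E+N) = V(\oline{E+N})$) with Remark~\ref{rem:intersect} applied to the closed sets $\oline{E+N}$ to obtain
\[
\bigcap_N R(E+N) \;\subseteq\; \bigcap_N V(\oline{E+N}) \;=\; V\Big(\bigcap_N \oline{E+N}\Big).
\]
Since $\hat G$ is locally compact abelian and $E$ is closed, letting $N$ range over relatively compact identity neighborhoods and using $\oline{E+N} \subseteq E + \oline N$ (extract a convergent subsequence in the compact set $\oline N$ and use closedness of $E$) yields $\bigcap_N \oline{E+N} = E$, and the claim follows.

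For the main inclusion $V(E) \subseteq \bigcap_N R(E+N)$, I will fix $v \in V(E)$, an identity neighborhood $N$, a continuous seminorm $p \in \cP^G$, and $\eps > 0$, and produce $g \in L^1(G)$ with $\hat g \in C_c(\hat G)$, $\supp(\hat g) \subseteq E+N$, and $p(v - \pi(g)v) < \eps$; this suffices because $R(E+N)$ is closed. First I will choose $u \in L^1(G)$ with $\hat u \in C_c(\hat G)$ and $p(v - \pi(u)v) < \eps$, using a bounded approximate identity in $L^1(G)$ with compactly supported Fourier transforms together with density of $\pi(L^1(G))V$ in $V$ and the equicontinuity of $\pi$. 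Then I will apply a cutoff: the compact set $K := E \cap \supp(\hat u)$ lies in the open set $E + N$ (since $E$ is closed and $0 \in N$), so the regularity of $\cA(\hat G)$ produces $\phi \in \cA(\hat G)$ with $\phi \equiv 1$ on an open neighborhood $U$ of $K$ and $\supp(\phi) \subseteq E + N$. Writing $\phi = \hat h$ with $h \in L^1(G)$ and setting $g := h * u$, I get $\hat g = \phi \hat u$ compactly supported in $E + N$, while $\widehat{u - g} = (1-\phi)\hat u$ vanishes on $U$ and off $\supp(\hat u)$, so $\supp(\widehat{u-g}) \subseteq \supp(\hat u) \setminus U$ is disjoint from $E$ (as $E \cap \supp(\hat u) \subseteq K \subseteq U$). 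Hence Remark~\ref{rem:a.6} forces $\pi(u-g)v = 0$, and $\pi(g)v = \pi(u)v$ gives $p(v - \pi(g)v) < \eps$.

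The hardest part will be assembling the two analytic inputs from harmonic analysis on the LCA group $\hat G$: the regularity of the Fourier algebra $\cA(\hat G)$, which supplies the cutoff $\phi$ with the prescribed support properties, and the existence of a bounded approximate identity in $L^1(G)$ with compactly supported Fourier transforms, which initiates the approximation. Both are classical; once they are in hand, the rest of the proof is a routine cutoff manipulation driven by the spectral characterization of $V(E)$ recalled in Remark~\ref{rem:a.6}.
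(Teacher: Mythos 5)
Your proof is correct, but it takes a genuinely different route from the paper's. The paper handles the hard inclusion $V(E) \subeq R(E+N)$ by reducing to the Banach space case: it passes to the Banach spaces $V_p$ associated to the invariant seminorms $p \in \cP^G$, invokes Arveson's Proposition 2.2 for $(\pi_p, V_p)$, and lifts the resulting approximants back to $V$ using density of the image of $V$ in $V_p$; the easy inclusion is then closed up exactly as you do, via Remarks~\ref{rem:a.13} and \ref{rem:intersect} and the identity $\bigcap_N (E+N) = E$ for compact $N$. You instead give a direct, self-contained proof of the hard inclusion, essentially inlining a locally convex version of Arveson's argument: an approximate identity with compactly supported Fourier transforms produces $u$ with $\pi(u)v$ close to $v$, and regularity of the Fourier algebra $\cA(\hat G)$ supplies the cutoff $\phi$ so that $\widehat{u-g}=(1-\phi)\hat u$ has support disjoint from $E$, whence $\pi(g)v = \pi(u)v$ by Remark~\ref{rem:a.6}. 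The two approaches localize in the same way --- your fixing of a single $p \in \cP^G$ (legitimate since $\cP^G$ is directed and generates the topology) plays the role of the paper's passage to $V_p$ --- but the paper buys brevity by outsourcing the harmonic analysis to \cite{Ar74}, while your version is independent of that reference at the cost of importing the two classical facts you name (regularity of $\cA(\hat G)$ and the existence of a bounded approximate identity with compactly supported Fourier transforms). All the individual steps you sketch check out, including the support bookkeeping $\supp(\widehat{u-g}) \subeq \supp(\hat u)\setminus U$ and the closedness argument $\oline{E+N} \subeq E + \oline N$ for relatively compact $N$.
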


\begin{prf} Since every identity neighborhood $N$ in $\hat G$ 
contains a compact one, it suffices to consider the spaces 
$R(E+N)$ for identity neighborhoods $N$ which are compact. 
Hence the assertion follows from \cite[Prop.~2.2]{Ar74} if 
$V$ is a Banach space. 

We first show that 
$V(E) \subeq R(E + N)$. We know already that this is the case 
if $V$ is Banach. To verify it in the general case, 
let $v \in V(E)$ and $U$ be a neighborhood of $v$. 
We have to show that $U$ intersects $R(E + N)$. 
Let $p \in \cP^G$ be a $G$-invariant seminorm for which 
the $\eps$-ball $U^p_\eps(v) = \{ w \: p(v-w) < \eps\}$ is contained in $U$. 
Since the result holds for the $G$-representation 
$(\pi_p, V_p)$ on the Banach space $V_p$ and the image 
$v_p$ of $v$ in this space belongs to $V_p(E)$, the  open 
$\eps$-ball $U_\eps(v_p)$ in $V_p$ intersects $R(E + N, \pi_p)$, 
hence contains a finite sum 
$\sum_{j = 1}^n \pi_p(f_j) w_j$ with 
$\supp(\hat f_j) \subeq E + N$ compact. 
Since the image of $V$ in $V_p$ is dense and $U_\eps(v_p)$ is open, 
we may even assume that 
$w_j = v_{j,p}$ for some $v_j \in V$. 
Then 
\[ p\Big(v - \sum_{j = 1}^n \pi(f_j) v_j\Big)
= p\Big(v_p - \sum_{j = 1}^n \pi_p(f_j) v_{j,p}\Big) < \eps \] 
implies that $\sum_{j = 1}^n \pi(f_j) v_j \in U \cap R(E + N)$. 
We conclude that $v \in \oline{R(E + N)} = R(E+N)$. 

We now arrive with Remarks~\ref{rem:intersect} and \ref{rem:a.13} at 
\[ V(E) \subeq \bigcap_N R(E+ N) \subeq 
\bigcap_N V(E + N) = V\Big( \bigcap_N (E + N)\Big) = V(E), \] 
because all sets $E + N$ are closed. 
\end{prf}

\begin{prop}\mlabel{prop:spec-add} 
Assume that $(\pi_j, V_j)$, $j =1,2,3$ are 
continuous equicontinuous representations of the locally 
compact abelian group $G$ on the complete locally convex spaces $V_j$ and that 
$\beta \: V_1 \times V_2 \to V_3$ 
is a continuous equivariant bilinear map. 
Then we have for closed subsets $E_1, E_2 \subeq \hat G$ the relation 
\[ \beta(V_1(E_1) \times V_2(E_2)) 
\subeq V_3(E_1 + E_2). \] 
\end{prop}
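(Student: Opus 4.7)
The plan is to apply Lemma~\ref{lem:intersect}: to prove $\beta(v_1, v_2) \in V_3(E_1+E_2)$, it suffices to show that $\beta(v_1, v_2) \in R_3(E_1+E_2+N)$ for every open identity neighborhood $N$ of $\hat G$. Fix such an $N$ and pick open identity neighborhoods $N_1, N_2$ of $\hat G$ with $N_1+N_2 \subeq N$; then, by Lemma~\ref{lem:intersect} again, $v_j \in R_j(E_j+N_j)$ for $j=1,2$. Combining the density built into the definition of $R_j$ with the continuity of $\beta$ and the closedness of the subspace $R_3(E_1+E_2+N) \subeq V_3$, the task reduces to establishing the inclusion for elements of the special form $v_j = \pi_j(f_j)w_j$ with $w_j \in V_j$, $\hat f_j \in C_c(\hat G)$ and $\supp(\hat f_j) \subeq E_j+N_j$.

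For such $v_j$ I would combine the $G$-equivariance identity $\pi_3(g)\beta(\,\cdot\,,\,\cdot\,) = \beta(\pi_1(g)\,\cdot\,, \pi_2(g)\,\cdot\,)$ with the change of variables $(g_1, g_2) \mapsto (g, h) := (g_1, g_1^{-1} g_2)$ in the double-integral representation of $\beta(\pi_1(f_1)w_1, \pi_2(f_2)w_2)$. The integrand is absolutely integrable in every continuous seminorm on $V_3$ (equicontinuity of $\pi_1(G)$ and $\pi_2(G)$ together with continuity of $\beta$ bound it pointwise by a constant multiple of $|f_1(g_1) f_2(g_2)|$, and $f_1, f_2 \in L^1(G)$), so substitution followed by Fubini yields the identity
\[ \beta(\pi_1(f_1)w_1, \pi_2(f_2)w_2) = \int_G \pi_3(\psi_h)\,\beta(w_1, \pi_2(h)w_2)\, dh, \]
with $\psi_h(g) := f_1(g) f_2(gh)$. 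A direct Fourier computation gives $\widehat{\psi_h} = \hat f_1 * \varphi_h$, where $\varphi_h(\chi) := \chi(h^{-1}) \hat f_2(\chi)$, so that $\supp(\widehat{\psi_h}) \subeq \supp(\hat f_1) + \supp(\hat f_2) \subeq E_1+E_2+N$ uniformly in~$h$.

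Consequently, for every $h \in G$ the value $\pi_3(\psi_h)\beta(w_1, \pi_2(h)w_2)$ lies in the closed subspace $R_3(E_1+E_2+N) \subeq V_3$. Since $V_3$ is complete, a Hahn--Banach argument (any continuous linear functional on $V_3$ annihilating $R_3(E_1+E_2+N)$ annihilates the integrand pointwise in $h$, and hence annihilates the $V_3$-valued integral) places the integral itself in $R_3(E_1+E_2+N)$, which via Lemma~\ref{lem:intersect} completes the proof. The main technical hurdle will be establishing the integral identity carefully and verifying the uniform Fourier-support bound on $\widehat{\psi_h}$; once these are in place, the spectral-subspace machinery reduces matters to a routine containment.
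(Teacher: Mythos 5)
Your proof is correct, and although it shares the paper's overall skeleton --- reduction via Lemma~\ref{lem:intersect} to elements of the form $\pi_j(f_j)w_j$ with $\supp(\hat f_j)$ compact and contained in $E_j+N_j$, followed by equivariance and a change of variables in the resulting integral --- the decisive step is carried out differently. The paper establishes membership in $V_3(E_1+E_2+U_0)$ through the annihilation criterion of Remark~\ref{rem:a.6}: it tests against an arbitrary third function $f_3 \in L^1(G)$ whose Fourier transform vanishes near $\oline{E_1+E_2+U_0}$, and reduces everything to the scalar identity $(f_3*F)\,\hat{}\,(\chi_1,\chi_2)=\hat f_1(\chi_1)\hat f_2(\chi_2)\hat f_3(\chi_1+\chi_2)=0$ on $G\times G$, hence $f_3*F=0$. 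You instead exhibit the element directly as a $V_3$-valued integral $\int_G \pi_3(\psi_h)\beta(w_1,\pi_2(h)w_2)\,dh$ with $\hat\psi_h=\hat f_1*\varphi_h$ supported in $E_1+E_2+N$ uniformly in $h$, so that the integrand takes values in the closed subspace $R_3(E_1+E_2+N)$, and you close with Hahn--Banach. Your route yields the marginally sharper intermediate statement that the image lies in $\bigcap_N R_3(E_1+E_2+N)$ and dispenses with the third test function and the triple integral; the paper's route avoids the measurability and integrability bookkeeping for the family $h\mapsto\psi_h$ and the Fourier-inversion step implicit in your argument. On that last point, do record explicitly that $\hat f_2\in C_c(\hat G)$ forces $f_2\in C_0(G)$ to be bounded, since this is what guarantees $\psi_h(g)=f_1(g)f_2(gh)$ lies in $L^1(G)$ with $\hat\psi_h\in C_c(\hat G)$, as required for $\pi_3(\psi_h)V_3\subeq R_3(E_1+E_2+N)$ by the definition of the spaces $R(E)$.
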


\begin{prf} 
Pick an identity neighborhood $U_0 \subeq \hat G$ and choose a 
symmetric $\1$-neighborhood $U$ with $UU \subeq U_0$. 
In view of Lemma~\ref{lem:intersect}, 
the assertion follows, if we can show that 
\[ \beta(R^{\pi_1}(E_1+U) \times R^{\pi_2}(E_2+U))  
\subeq V_3(E_1 + E_2 + U_0). \]  
To verify this relation, pick $v_1, v_2 \in V$ 
and $f_1, f_2 \in L^1(\R)$ such that 
the functions $\hat f_j$ are compactly supported with 
$\supp(\hat f_j) \subeq E_j + U$. We have to show that 
\[ \beta(\pi_1(f_1) v_1, \pi_2(f_2)v_2) \in V_3(E_1 + E_2 + U_0), \]  
i.e., that for any $f_3 \in L^1(\R)$ for which $\hat f_3$ vanishes on 
a neighborhood of $\oline{E_1 + E_2 + U_0}$, we have 
\[ \pi_3(f_3)\beta(\pi_1(f_1)v_1, \pi_2(f_2)v_2) = 0. \] 
This expression can easily be evaluated to 
\begin{align*}
&\pi_3(f_3)\beta(\pi_1(f_1)v_1, \pi_2(f_2)v_2) \\ 
&= \int_G \int_G \int_G f_3(g_3) f_1(g_1)f_2(g_2) \pi_3(g_3)\beta(\pi_1(g_1)v_1, 
\pi_2(g_2)v_2)\, dg_1 d{g_2} d{g_3} \\ 
&= \int_G \int_G \int_G 
f_3(g_3) f_1(g_1)f_2(g_2) \beta(\pi_1(g_3+g_1)v_1, 
\pi_2(g_3+g_2)v_2)\, dg_1 d{g_2} d{g_3} \\ 
&= \int_G \int_G \int_G 
f_3(g_3) f_1(g_1-g_3)f_2(g_2-g_3) \beta(\pi_1(g_1)v_1, 
\pi_2(g_2)v_2)\, dg_1 d{g_2} d{g_3} \\ 
&= \int_G \int_G \int_G 
f_3(g_3) f_1(g_1-g_3)f_2(g_2-g_3) \beta(\pi_1(g_1)v_1, 
\pi_2(g_2)v_2)\,  d{g_3}dg_1 d{g_2} \\ 
&= \int_G \int_G (f_3 * F)(g_1, g_2)\beta(\pi_1(g_1)v_1, 
\pi_2(g_2)v_2)\,  d{g_3}dg_1 d{g_2} \\ 
\end{align*}
for $F(g_1, g_2) := f_1(g_1) f_2(g_2)$ and 
\[ (f_3 * F)(g_1, g_2) = \int_G f_3(g) f_1(g_1-g)f_2(g_2-g)\, dg.\] 
The Fourier transform of $f_3 * F \in L^1(G \times G)$ is given by 
\begin{align*}
(f_3 * F)\,\hat{}(\chi_1, \chi_2) 
&= \int_G \int_G \int_G f_3(g) f_1(g_1-g)f_2(g_2-g)\chi_1(g_1)\chi_2(g_2)
\, dg\, dg_1\, dg_2\\
&= \int_G f_3(g) \chi_1(g)\chi_2(g) \int_G \int_G f_1(g_1)f_2(g_2)
\chi_1(g_1)\chi_2(g_2)\, dg_1\, dg_2\, dg\\
&= \hat f_1(\chi_1) \hat f_2(\chi_2) \hat f_3(\chi_1+\chi_2).
\end{align*}
By assumption, $\hat f_3$ vanishes on 
\[ \supp(\hat f_1)+\supp(\hat f_2)
\subeq E_1 + E_2 + U + U \subeq E_1 + E_2 + U_0.\] 
Therefore $(f_3 * F)\,\hat{}=0$, and hence also $f_3 * F =0$. 
This completes the proof. 
\end{prf}

\begin{rem} \mlabel{rem:a.15} Now we consider the case $G= \R$ and a 
strongly continuous representation $(\alpha, V)$ of $\R$ by 
isometries. We know already from Lemma~\ref{lem:a.10} that 
$V_0 := V(\{0\})$ is the set of fixed points. We would like 
to have a decomposition 
\begin{equation}
  \label{eq:trideco}
V = V_+ \oplus V_0 \oplus V_-,
\end{equation}
where $V_\pm$ are closed invariant subspaces satisfying 
\[ V_+ \subeq V([\delta, \infty[) \quad \mbox{ and } \quad 
V_- \subeq V(]-\infty, -\delta]) \] 
for some $\delta > 0$. 

We claim that this implies that the latter inclusions are equalities. 
In fact, for any three pairwise disjoint closed subsets 
$E_1, E_2, E_3$, it follows from 
the relation 
\[ V(E_1) + V(E_2) \subeq V(E_1 \cup E_2)\] 
and 
Remark~\ref{rem:intersect} that the sum 
$V(E_1) + V(E_2) + V(E_3)$ is direct. Our assumption 
\eqref{eq:trideco} now implies 
that $V$ satisfies 
\begin{equation}
  \label{eq:splitco} 
 V(]-\infty, -\delta])  \oplus  V(\{0\}) \oplus  V([\delta, \infty[) = V.\tag{SC}
\end{equation}
Since this sum is direct, it follows that 
\begin{equation}
V_+ = V([\delta, \infty[) \quad \mbox{ and } \quad 
V_- = V(]-\infty, -\delta]). 
\end{equation}
Condition (SC) is called the {\it splitting condition}. 
\end{rem} 

\begin{lem}
  \label{lem:a.17}
If $D := \alpha'(0)$ is a bounded operator, i.e., 
$\alpha \: \R \to \Aut(V)$ is norm continuous, then the splitting 
condition \eqref{eq:splitco}  is satisfied if and only if 
$0$ is isolated in the spectrum of the hermitian operator 
$D$. 
\end{lem}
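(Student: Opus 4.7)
The plan is to match the Arveson spectral decomposition of $V$ under $\alpha$
with the Riesz holomorphic decomposition of the bounded generator $D$.
Since each $G$-invariant continuous seminorm $p$ on $V$ descends to a
norm-continuous isometric $\R$-action on the Banach quotient $V_p$ whose
generator is the image $D_p$ of $D$, I would first reduce to the Banach case:
both the splitting condition \eqref{eq:splitco} and ``$0$ isolated in $\Spec(D)$''
are invariant under taking projective limits along the quotients. On each $V_p$,
isometry of $e^{tD_p}$ forces $\Spec(D_p) \subseteq i\R$ (any real part
would produce exponential norm growth), and the compactness of $\Spec(D_p)$
makes the Riesz calculus available.

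For the implication ``$0$ isolated $\Rightarrow$ (SC)'', I would choose
$\delta>0$ with $\Spec(D) \cap i[-\delta,\delta] \subseteq \{0\}$, split
$\Spec(D)$ into the three clopen pieces $\{0\}$, $\Spec(D) \cap i[\delta,\infty)$,
and $\Spec(D) \cap i(-\infty,-\delta]$, and form the corresponding Riesz projections
$P_0, P_+, P_-$. These commute with $\alpha$ and yield
$V = V_0 \oplus V_+ \oplus V_-$ as a topological direct sum of
closed $\alpha$-invariant subspaces. On $V_0$, the spectrum of $D|_{V_0}$ is
$\{0\}$, and equicontinuity of $e^{tD}|_{V_0}$ rules out nontrivial Jordan
structure (which would force polynomial growth in $t$), so $D|_{V_0}=0$ and hence
$V_0 = V^\alpha = V(\{0\})$ by Lemma~\ref{lem:a.10}. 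For $v \in V_+$ and any
$f \in L^1(\R)$ with $\supp(\hat f) \cap [\delta,\infty) = \eset$, the
holomorphic functional calculus will give $\alpha(f)|_{V_+} = \hat f(-iD|_{V_+}) = 0$,
since $\hat f$ vanishes on an open neighborhood of $\Spec(-iD|_{V_+})
\subseteq [\delta,\infty)$; by Remark~\ref{rem:a.6} this means
$V_+ \subseteq V([\delta,\infty))$, and symmetrically
$V_- \subseteq V(]-\infty,-\delta])$. This is (SC).

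For the converse, assume (SC). The $\alpha$-invariant summands $V_0, V_\pm$
are also $D$-invariant, so
$\Spec(D) = \Spec(D|_{V_0}) \cup \Spec(D|_{V_+}) \cup \Spec(D|_{V_-})$.
On $V_0 = V(\{0\})$ the group acts trivially, so $D|_{V_0}=0$. For $V_+$
(and symmetrically $V_-$) I would invoke the resolvent characterization
of the Arveson spectrum from Remark~\ref{rem:joe} applied to each
$v \in V_+ = V([\delta,\infty))$: the map $z \mapsto (z - D)^{-1}v$
extends holomorphically across the imaginary axis at every $i\lambda$
with $\lambda \in \R\setminus[\delta,\infty)$. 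Combined with the closedness
and $D$-invariance of $V_+$, uniform boundedness of these resolvents on
compacts will produce a genuine inverse of $z - D|_{V_+}$ on the open set
$i(\R \setminus [\delta,\infty))$, forcing
$\Spec(D|_{V_+}) \subseteq i[\delta,\infty)$. Altogether
$\Spec(D) \cap i(-\delta,\delta) \subseteq \{0\}$, so $0$ is isolated.

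The main obstacle I anticipate is the precise identification of the
Arveson spectral subspaces with the Riesz-projection ranges of $D$: for
$f \in L^1$ the Fourier transform $\hat f$ is only continuous, not
holomorphic, so the functional-calculus step in the first direction must
be justified either by approximating with functions whose $\hat f$ is
compactly supported and smooth (Paley--Wiener), or directly via the
Dunford integral (exploiting that $\hat f$ is identically zero, hence
trivially holomorphic, on a $\C$-neighborhood of the relevant compact
spectrum). Once this bridge is in place, the two implications become
dual readings of the same spectral picture.
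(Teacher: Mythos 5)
Your forward direction is essentially the paper's: once $0$ is isolated in $\Spec(D)$, the three Riesz--Dunford projections attached to the clopen pieces of $\Spec(iD)$ give the topological decomposition, and the only real work is identifying the ranges of these projections with the Arveson spectral subspaces. The paper disposes of this identification by quoting Remark~\ref{rem:joe} (J\o{}rgensen's description of the Arveson spectrum via local resolvents) together with a reference to Belti\c{t}\u{a}, whereas you spell out the $L^1$-versus-holomorphic-calculus issue and propose workable fixes; your preliminary reduction to Banach quotients is unnecessary, since the hypothesis that $D$ is bounded and $\alpha$ norm continuous already places you in the normed setting of Remark~\ref{rem:a.15}. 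Your converse, however, takes a genuinely different route. You argue vectorwise: for $v\in V_+=V([\delta,\infty[)$ the local resolvent of $v$ extends across $i(\R\setminus[\delta,\infty[)$, and you assemble these extensions into an inverse of $z-D\res_{V_+}$. The paper instead observes that \eqref{eq:splitco} makes $D$ invertible on $V_+\oplus V_-$ and zero on $V_0=V(\{0\})$ (Lemma~\ref{lem:a.10}), so that $V=\ker(D)\oplus\im(D)$ with $\im(D)=V_+\oplus V_-$ closed, and then cites Sinclair's theorem on eigenvalues in the boundary of the numerical range; this is shorter and avoids local spectral theory entirely.

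One step of your converse needs more than you offer. Passing from ``every local resolvent extends holomorphically to $z_0$'' to ``$z_0\notin\Spec(D\res_{V_+})$'' is not a matter of uniform boundedness on compacta: one must first know that the analytic extensions are single-valued and depend linearly on $v$, which is the single-valued extension property for $D$. Here SVEP does hold, because $\Spec(D)\subeq i\R$ has empty interior, and with it the argument closes: evaluating the extensions at $z_0$ gives surjectivity of $z_0-D\res_{V_+}$, and injectivity follows because an eigenvector $y$ would have local resolvent $y/(z-z_0)$, which does not extend to $z_0$. So your route works, but the mechanism you name is not the one doing the work; either invoke SVEP explicitly or, more economically, note that by Remark~\ref{rem:joe} the spectrum of $-iD\res_{V_+}$ coincides with the Arveson spectrum of the restricted representation, which is contained in $[\delta,\infty[$ directly from the definition of $V([\delta,\infty[)$ and Remark~\ref{rem:a.6}.
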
 

\begin{prf} First we recall from Remark~\ref{rem:joe} that the 
Arveson spectrum $\Spec(V)$ coincides with the spectrum of 
$D$, which is a subset of $i\R$.

If $0$ is isolated in $\Spec(D)$, then 
$\Spec(iD) \subeq ]-\infty,-\delta] \cup \{ 0\} \cup 
[\delta, \infty[$ 
for some $\delta > 0$, and the splitting condition follows from 
the existence of corresponding spectral projections defined by 
contour integrals (see also \cite[Thm.~1.4]{Bel04}, where spectral subspaces 
are defined in terms of Dunford's local spectral theory.) 

Suppose, conversely, that $D$ is not invertibble and that the 
splitting condition is satisfied. Then 
$\im(D) = V_+ \oplus V_-$ is a closed subspace, $V_0 = \ker D$, and 
$V = \im(D) \oplus \ker(D)$. Hence \cite{Si70} implies that 
$0$ is isolated in $\Spec(D)$. 
\end{prf}


\begin{thebibliography}{aaaaaa} 

\bibitem[Ar74]{Ar74} Arveson, W., {\it On groups of automorphisms of operator 
algebras}, J.~Funct. Anal. {\bf 15} (1974), 217--243 

\bibitem[Bel03]{Bel03} Belti\c{t}\u{a}, D.,  
{\it Complex homogeneous spaces of pseudo-restricted groups}, 
Math. Res. Lett. {\bf 10:4} (2003), 459--467

\bibitem[Bel04]{Bel04} ---, 
{\it On Banach--Lie algebras, spectral decompositions and complex 
polarizations}, in ``Operator Theory: Advances and Applications'' {\bf 153}, 
Birkh\"auser, Basel; 13--38 

\bibitem[Bel05]{Bel05} ---, 
{\it Integrability of analytic almost complex structures on Banach 
manifolds}, Annals of Global Anal. and Geom. {\bf 28} (2005), 59--73 

\bibitem[BG08]{BG08} Belti\c{t}\u{a}, D., and J.~Gal\'e, {\it 
Holomorphic geometric models for representations of $C^*$-algebras}, 
J. Funct. Anal. {\bf 255:10}  (2008),  2888--2932

%\bibitem[BN10]{BN10} Beltita, D., and K.-H. Neeb, {\it 
%Schur--Weyl Theory for $C^*$-algebras}, in preparation 

\bibitem[BR07]{BR07} Belti\c{t}\u{a}, D., and T. S. Ratiu, {\it 
Geometric representation theory for unitary groups of operator algebras}, 
Advances in Math. {\bf 208} (2007), 299--317 

\bibitem[BRT07]{BRT07}
Belti\c{t}\u{a}, D., T.~S.~Ratiu, and A.~B.~Tumpach, 
{\it The restricted Grassmannian, Banach Lie-Poisson spaces, 
and coadjoint orbits}, J. Funct. Anal. \textbf{247} (2007), no.~1, 138--168

\bibitem[BH98]{BH98} Bertram, W., and J. Hilgert, {\it Reproducing 
kernels on vector bundles}, in ``Lie Theory and Its Applications in 
Physics III,'' World Scientific, Singapore, 1998; 43--58 

\bibitem[Bou07]{Bou07} Bourbaki, N., ``Espaces vectoriels topologiques. 
Chap.1 \`a 5'', Springer-Verlag, Berlin, 2007 

\bibitem[Bo80]{Bo80} Boyer, R., {\it Representations of the Hilbert Lie group
$U({\cal H})_2$}, Duke Math.\ J. {\bf 47} (1980), 325--344

\bibitem[Dix96]{Dix96} Dixmier, J., ``Les alg\`ebres d'op\'erateurs dans 
l'espace Hilbertien,'' \'Editions Gabay, 1996 

%\bibitem[Dix77]{Dix77} Dixmier, J., ``$C^*$-algebras,''
%North Holland Publishing Company, Amsterdam,  New York, Oxford, 1977

\bibitem[FV70]{FV70} Foias, C., and F.-H. Vasilescu, 
{\it On the spectral theory of commutators}, 
J.~Math. Anal. Appl. {\bf 31} (1970), 473--486

\bibitem[GN03]{GN03} Gl\"ockner, H., and K.-H. Neeb, 
{\it Banach--Lie quotients, enlargibility, 
and universal complexifications}, J. reine angew. Math. {\bf 560} (2003), 1--28 
\bibitem[Go69]{Go69} Goodman, R. W., {\it Analytic and entire vectors 
for representations of Lie groups}, Trans. Amer. Math. Soc. {\bf 143} (1969), 
55--76

\bibitem[HR70]{HR70} Hewitt, E., and K.A. Ross, 
``Abstract Harmonic Analysis II,'' Springer Verlag, Berlin,
Heidelberg, New York, 1970 

\bibitem[Jo82]{Jo82} J\o{}rgensen, P. E. T., {\it 
Spectral theory for infinitesimal generators of one-parameter groups 
of isometries: the min-max principle and compact perturbations}, 
J. Math. Anal. Appl. {\bf 90:2} (1982), 343--370

%\bibitem[Ka83]{Ka83} Kaup, W., {\it \"Uber
%die Klassifikation der symmetrischen
%her\-mi\-te\-schen Mannigfaltigkeiten unendlicher Dimension II},
%Math. Annalen {\bf 262}(1983), 57--75

\bibitem[Ko68]{Ko68} Kobayashi, S., {\it Irreducibility 
of certain unitary representations}, 
J. Math. Soc. Japan {\bf 20} (1968), 638--642

\bibitem[KoT05]{KoT05} Kobayashi, T., {\it 
Multiplicity-free representations and visible actions on complex manifolds}, 
Publ. Res. Inst. Math. Sci. {\bf 41:3} (2005), 497--549 

\bibitem[KoT06]{KoT06} ---,  {\it 
Propagation of multiplicity-free property for holomorphic vector bundles}, 
arXiv:math.RT/0607004v1, 30 Jun 2006 

\bibitem[Le99]{Le99} Lempert, L., {\it The Dolbeault complex in infinite 
dimensions II}, J. Amer. Math. Soc. {\bf 12:3} (1999), 775--793 

\bibitem[Li91]{Li91} Lisiecki, W., 
{\it A classification of coherent state representations of 
unimodular Lie groups}, Bull.\ Amer.\ Math.\ Soc. {\bf 25:1} (1991),
37--43

%\bibitem[Lo53]{Lo53} Loomis, L., ``Abstract Harmonic Analysis,'' van 
%Nostrand, New York, 1953

%\bibitem[MN10]{MN10} Merigon, S., and K.-H. Neeb, K.-H., 
%{\it A L\"uscher--Mack Theorem for Banach--Lie groups}, 
%in preparation 

\bibitem[MPW97]{MPW97} Monastyrski, M., and Z. Pasternak-Winiarski, 
{\it Maps on complex manifolds into Grassmann spaces defined by 
reproducing kernels of Bergman type}, Demonstratio Math. 
{\bf 30:2} (1997), 465--474 

\bibitem[MNS09]{MNS09} 
M\"uller, C., K.-H. Neeb, and H. Sepp\"anen, 
{\it Borel--Weil Theory for Root Graded Banach--Lie groups},
Int. Math. Res.  Notices {\bf 2010:5} (2010), 783--823

\bibitem[Ne00]{Ne00} ---, % Neeb, K. -- H., 
``Holomorphy and Convexity in Lie Theory,'' 
Expositions in Mathematics {\bf 28}, de Gruyter Verlag, Berlin, 2000 

\bibitem[Ne01]{Ne01} ---,  {\it Representations of infinite dimensional
 groups}, pp.~131--178; in ``Infinite Dimensional K\"ahler Manifolds,'' 
Eds. A. Huckleberry, T. Wurzbacher, DMV-Seminar {\bf 31}, 
Birkh\"auser Verlag, 2001 

\bibitem[Ne08]{Ne08} ---, {\it A complex semigroup approach to group 
algebras of infinite dimensional Lie groups}, Semigroup Forum 
{\bf 77} (2008), 5--35

\bibitem[Ne09]{Ne09} ---, 
{\it Semibounded unitary representations of infinite dimensional Lie groups}, 
in ``Infinite Dimensional Harmonic Analysis IV'', 
Eds. J.\ Hilgert et al, World Scientific, 2009; 209--222

\bibitem[Ne10a]{Ne10a} ---,  {\it 
On differentiable vectors for representations of infinite dimensional 
Lie groups}, J.~Funct. Anal., to appear, arXiv:math.RT.1002.1602, 8 Feb 2010

\bibitem[Ne10b]{Ne10b} ---, {\it On analytic vectors for 
unitary representations of infinite dimensional Lie groups}, 
Ann. Inst. Fourier, to appear; arXiv:math.RT.1002.4792v1, 25 Feb 2010 

\bibitem[Ne10c]{Ne10c} ---, {\it Semibounded representations and invariant 
cones in infinite dimensional Lie algebras}, Confluentes Math. {\bf 2:1} 
(2010), 37--134

\bibitem[Ne11]{Ne11} ---, {\it Semibounded representations 
of hermitian Lie groups}, in preparation

%\bibitem[NO98]{NO98} Neeb, K.--H., 
%and B. \O{}rsted, {\it Unitary Highest Weight
%Representations in Hilbert Spaces of 
%Holomorphic Functions on Infinite Dimensional Domains}, J.\ 
%Funct.\ Anal. {\bf 156} (1998), 263--300 

\bibitem[NS10]{NS10} Neeb,  K.-H., and H. Sepp\"anen, {\it 
Borel--Weil Theory for Groups over Commutative Banach Algebras}, 
J. reine angew. Math., to appear; arXiv:math.RT.0909.1697, 11 Sep 2009 

\bibitem[Od88]{Od88} Odzijewicz, A., {\it On reproducing kernels and 
quantization of states}, Comm. Math. Phys. {\bf 114} (1988), 577--597

\bibitem[Od92]{Od92} ---,  {\it Coherent states and geometric 
quantization}, Comm. Math. Phys. {\bf 150:2} (1992), 385--413 

%\bibitem[Ol90]{Ol90} Olshanski, G., 
%{\it Unitary representations of infinite dimensional $(G,K)$-pairs
%and the formalism of R. Howe}, in
%``Representations of Lie Groups and Related Topics,''
%Eds. A. M. Vershik and D. P. Zhelobenko,
%Advanced Studies in Contemp. Math. {\bf 7}, Gordon and Breach
%Science Publ., 1990


\bibitem[Pi88]{Pi88} Pickrell, D., 
{\it The separable representations of $U({\cal
H})$}, Proc. of the Amer.\ Math.\ Soc. {\bf 102}(1988), 416--420 

\bibitem[PS86]{PS86} Pressley, A., and G. Segal, ``Loop Groups," Oxford University Press, 
Oxford, 1986

\bibitem[Ra85]{Ra85} R\u adulescu, F., {\it Spectral properties of generalized 
multipliers}, J. Oper. Theory {\bf 14:2} (1985), 277--289 

\bibitem[RS75]{RS75} Reed, S., and B. Simon, 
``Methods of Modern Mathematical Physics I: Functional Analysis,'' 
Academic Press, New York, 1973

\bibitem[Ru73]{Ru73} Rudin, W., ``Functional Analysis,'' McGraw Hill, 1973

\bibitem[Si70]{Si70} Sinclair, A. M., {\it Eigenvalues in the boundary of 
the numerical range}, Pac. J.~Math. {\bf 35:1} (1970), 231--234 

\bibitem[Ta03]{Ta03}
Takesaki, M., ``Theory of Operator Algebras. I,'' 
Encyclopedia of Mathematical Sciences {\bf 125}, 
Operator Algebras and Non-commutative Geometry {\bf 6}, 
Springer-Verlag, Berlin, 2003.

\bibitem[TW70]{TW70} Tirao, J. A., and J.\ Wolf, {\it Homogeneous holomorphic vector 
bundles}, Indiana University Math.\ Journal {\bf 20:1} (1970), 15--31

\bibitem[Up85]{Up85}
Upmeier, H., 
{\it Symmetric Banach Manifolds and Jordan $C^*$-algebras},
North-Holland Mathematics Studies, 104. 
Notas de Matem\`atica {\bf 96}, North-Holland Publishing Co.,
Amsterdam,
1985 

\end{thebibliography}
\end{document}